\documentclass[11pt]{amsart} \textwidth=14.5cm \oddsidemargin=1cm
\evensidemargin=1cm
\usepackage{amsmath,amsthm,amssymb,latexsym,epic,bbm,comment,yfonts,mathrsfs}
\usepackage{graphicx,enumerate,stmaryrd,rotating,xcolor}
\usepackage[all]{xy}
\xyoption{2cell}

\allowdisplaybreaks


\usepackage[active]{srcltx}
\usepackage{youngtab}

\usepackage[latin1]{inputenc}
\usepackage{amsxtra}
\usepackage{amsmath}
\usepackage{amscd}
\usepackage{amssymb}
\usepackage{amsfonts}
\usepackage[all]{xy}
\usepackage{mathrsfs}
\usepackage{amsthm}
\usepackage{color}
\usepackage{upgreek}
\usepackage{verbatim}  
\usepackage{enumerate}
\usepackage{latexsym}
\usepackage{graphicx}
\usepackage{tikz}
\usepackage{todonotes}
\usepackage{setspace}
\usepackage{hyperref}
\usepackage{stmaryrd}
\usepackage{nicefrac}
\usepackage{euscript}
\usetikzlibrary{decorations.pathmorphing}

 \definecolor{darkgreen}{HTML}{336633}
 \definecolor{darkred}{HTML}{993333}
\makeatletter

\newcommand{\arxiv}[1]{\href{http://arxiv.org/abs/#1}{\tt
    arXiv:\nolinkurl{#1}}}

\theoremstyle{plain}
\newtheorem{thm}{Theorem}
\newtheorem*{thm*}{Theorem}

\newtheorem{lem}[thm]{Lemma}
\newtheorem{prop}[thm]{Proposition}

\newtheorem{cor}[thm]{Corollary}
\newtheorem{df-prop}[thm]{Definition-Proposition}

\theoremstyle{definition}

\theoremstyle{remark}
\newtheorem{rem}[thm]{Remark}



\def\bbI{\mathbb{I}}

\def\bbN{\mathbb{N}}

\def\bbQ{\mathbb{Q}}

\def\bbT{\mathbb{T}}

\def\bbV{\mathbb{V}}
\def\bbW{\mathbb{W}}

\def\bbZ{\mathbb{Z}}

\def\frakb{\mathfrak{b}}

\def\frakg{\mathfrak{g}}
\def\frakh{\mathfrak{h}}

\def\frakl{\mathfrak{l}}

\def\frakn{\mathfrak{n}}

\def\frakp{\mathfrak{p}}

\def\fraku{\mathfrak{u}}


\def\onto{\twoheadrightarrow}

\def\mod{\operatorname{-mod}\nolimits}

\newcommand{\hf}{{\Small \frac12}}

\def\Hom{\operatorname{Hom}\nolimits}

\def\Res{\operatorname{Res}\nolimits}
\def\Ind{\operatorname{Ind}\nolimits}

\def\wbT{\widehat{\mathbb{T}}}

\def\ep{\epsilon}
\def\gl{\mathfrak{gl}}

\def\la{\lambda}

\def\pn{\mf{p} (n)}

\def\ov{\overline}



\newcommand{\mc}{\mathcal}
\newcommand{\mf}{\mathfrak}
\newcommand{\C}{\mathbb C}

\newcommand{\oa}{{\bar 0}}
\newcommand{\ob}{{\bar 1}}

\newcommand{\fg}{\mathfrak{g}}

\newcommand{\fh}{\mathfrak{h}}

\newcommand{\n}{\mathfrak{n}}

\newcommand{\h}{\mathfrak{h}}

\newcommand{\ch}{\mathrm{ch}}

\newcommand{\Coind}{{\rm Coind}}

\newcommand{\g}{\mathfrak{g}}
\newcommand{\fl}{\mathfrak{l}}

\newcommand{\fu}{\mathfrak{u}}
\newcommand{\Real}{\mathrm{Re}}


\setcounter{tocdepth}{1}

\newcommand{\Z}{{\mathbb Z}}







     \def\Ann{{\text{Ann}}}

       \def\Id{{\text{Id}}}


               



          \def\OI{{\ov{\mc O}}}
         \def\Lnua{{\Lambda(\zeta)}}

          
           \def\plz{{P_{\mf l_\zeta}}}
               \def\Opres{\mc O^{\zeta\text{-pres}}}
               \def\vpre{{\zeta}\text{-pres}}
\begin{document}

\numberwithin{equation}{section}

\title[Whittaker categories and quantum symmetric pairs]{Whittaker categories of quasi-reductive Lie superalgebras and quantum symmetric pairs}

\author[Chen]{Chih-Whi Chen}
\address{Department of Mathematics, National Central University, Chung-Li, Taiwan 32054} \email{cwchen@math.ncu.edu.tw}
\author[Cheng]{Shun-Jen Cheng}
\address{Institute of Mathematics, Academia Sinica, and National Center of Theoretical Sciences, Taipei, Taiwan 10617} \email{chengsj@math.sinica.edu.tw}
\date{}

\begin{abstract} 
	
	We show that, for an arbitrary quasi-reductive Lie superalgebra with a triangular decomposition and a character $\zeta$ of the nilpotent radical, the associated Backelin functor $\Gamma_\zeta$ sends Verma modules to standard Whittaker modules provided the latter exist.  As a consequence, this gives a complete solution to the problem of determining the composition factors of the standard Whittaker modules in terms of composition factors of Verma modules in the category $\mc O$.  
 In the case of the ortho-symplectic Lie superalgebras, we show that the Backelin functor $\Gamma_\zeta$ and its target category, respectively, categorify a $q$-symmetrizing map and the corresponding $q$-symmetrized Fock space associated with a quasi-split quantum symmetric pair of type $AIII$.
	
\end{abstract}

\maketitle

\tableofcontents

\noindent
\textbf{MSC 2010:} 17B10 17B55

\noindent
\textbf{Keywords}: Lie superalgebra, Whittaker category, quantum symmetric pair, categorification.
\vspace{5mm}

\section{Introduction and Description of Results}\label{sec1}
\subsection{Background and motivation}
Let $\g$ be a finite-dimensional quasi-reductive Lie superalgebra with a triangular decomposition
	\begin{align}
		&\g=\mf n^- \oplus \h \oplus \mf n^+ \label{eq::tri}
	\end{align}  in the sense of \cite{Ma} (see also \cite{CCC}). Here $\h$ is a purely even Cartan subalgebra with nil- and opposite radical $\mf n^\pm$, respectively. 
Let $Z(\g_\oa)$ be the center of the universal enveloping algebra  $U(\g_\oa)$ of $\g_\oa$. Motivated by earlier results of Mili{\v{c}}i{\'c}-Soergel \cite{MS} and Backelin \cite{B}, Mazorchuk and the authors \cite{Ch21,Ch212,CCM2} studied the category $\mc N$ consisting of finitely generated $\g$-modules that are locally finite over $Z(\g_\oa)$ and $U(\mf n^+)$, which we refer to as {\em Whittaker modules} over $\g$.   The category $\mc N$ decomposes into a direct sum of the full subcategories $\mc N(\zeta)$, with $\zeta\in\ch\mf n^+_\oa:{=}\left(\mf n_\oa^+/[\mf n_\oa^+,\mf n_\oa^+]\right)^*$, consisting of modules $M\in \mc N$ on which $x-\zeta(x)$ acts locally nilpotently, for any $x\in \mf n^+_\oa$. Furthermore, the category $\mc N(\zeta)$ is a union of certain full subcategories $\mc N(\zeta)^n$, for $n\geq 1$, in the sense of \cite[Section 4.1.2]{Ch21}, which extends the definition in \cite[Section 5]{MS} for Lie algebras. In particular, $\mc N(\zeta)^1$ contains all simple objects in $\mc N(\zeta)$ and $\mc N(0)^1$ coincides with the Bernstein-Gelfand-Gelfand (BGG) category $\mc O$. Therefore, the category $\mc N(\zeta)^1$ provides a suitable framework for the study of Whittaker modules. In the present paper, we  introduce an integral block $\mc  W(\zeta)$ of $\mc N(\zeta)^1$ (see Section \ref{sect::23}). In particular, $\mc W(0)$ coincides with the integral BGG category $\mc O_\Z$.

For Lie superalgebras, the {\em standard Whittaker modules} $M(\la,\zeta)\in \mc N(\zeta)$ were introduced in  \cite{BCW,Ch21}, $\la\in \h^\ast$. These modules were initially studied by McDowell \cite{Mc,Mc2} and  Mili{\v{c}}i{\'c}-Soergel \cite{MS} for Lie algebras. They are generalizations of Verma modules and play an analogous role in the representation theory of $\mc W(\zeta)$.

A quasi-reductive Lie superalgebra $\g$ is said to be of {\em type I} if $\g$ has a $\Z_2$-compatible $\Z$-gradation  of the form $\g=\g_{-1}\oplus \g_0\oplus \g_1$. In this case,  the standard Whittaker modules and the Whittaker category $\mc W(\zeta)$ have been investigated by the first author \cite{Ch21}, and then by Mazorchuk and the authors \cite{CCM2}. In particular, for the general linear Lie superalgebra $\gl(m|n)$ the composition multiplicity of standard Whittaker modules are controlled by the Brundan-Kazhdan-Lusztig polynomials from \cite{Br1, CLW2}. Furthermore, it was shown in \cite{CCM2} that $\mc W(\zeta)$ categorifies a $q$-symmetrized Fock space over a quantum group of type A in the case $\g=\gl(m|n)$.

A main motivation for the present paper is to study the multiplicities of composition factors of standard Whittaker modules and a corresponding categorification picture using the Whittaker categories $\mc W(\zeta)$ for Lie superalgebras beyond type I.

\subsection{Setup}   \label{sect::assumption}
Throughout this paper, we let $\g$ be  an arbitrary finite-dimensional quasi-reductive Lie superalgebra.  This means  that $\g_\oa$
	is a reductive Lie algebra and $\g_\ob$
	is semisimple as a $\g_\oa$-module. We fix a triangular decomposition of $\g$ as given in \eqref{eq::tri}. The corresponding {\em Borel subalgebra} is defined to be $\mf b:=\mf h\oplus \mf n^+$.  We are mainly interested in the following examples of quasi-reductive Lie superalgebras in Kac's list \cite{Ka1}:
\begin{align} \label{eq::Kaclist}
&\gl(m|n),~{\mf{sl}}(m|n),~{\mf{osp}}(m|2n), ~D(2,1;\alpha),~ G(3), ~F(4), ~\mf{p}(n).
\end{align}  We refer to, e.g., \cite{ChWa12, Mu12} for more details about these Lie superalgebras.

For each root $\alpha$ of $\g$, we denote by $\g_\alpha:=\{x\in \g|~ hx=\alpha(h)x, \text{ for all }h\in \h\}$  the corresponding root space. For $\zeta\in\ch\mf n_\oa^+$ we denote by $\Pi_\zeta$  the set of all roots $\alpha$ in $\mf n^+_\oa$ with $\zeta(\g_\alpha)\neq 0.$ This leads to a Levi subalgebra $\mf l_\zeta$ of $\g_\oa$, that is, $\mf l_\zeta$ is generated by $\mf h$ and $\g_{\pm\alpha}$ for $\alpha \in \Pi_\zeta$.

In this paper, unless otherwise specified, we shall assume that  $\mf l_\zeta$ is a Levi subalgebra in a parabolic decomposition in the sense of \cite[Section 2.4]{Ma} (in particular, $\zeta$ can be extended trivially to a character of $\mf n^+$):
	\begin{align}
	&\mf g=\mf u^-\oplus \mf l_\zeta \oplus \mf u^+,
	\end{align} which is compatible with the triangular decomposition given in \eqref{eq::tri}, namely,  $\mf u^\pm \subseteq \mf n^\pm$. That is,  there exists $H\in \fh$  such that
\begin{align} \label{eq::paracomp}
\fl_\zeta:=\bigoplus_{\Real \alpha(H)=0} \fg_\alpha,\quad\fu^+:=\bigoplus_{\Real \alpha(H)>0} \fg_\alpha,\quad\fu^-:=\bigoplus_{\Real \alpha(H)<0} \fg_\alpha,
\end{align}
such that $\Real\alpha(H)\geq 0,~\text{for any root } \alpha$ in $\mf n^+,$
where $\Real(z)$ denotes the real part of $z\in \mathbb C$;  see also \cite[Section 3.3]{Ch21} for examples. We let $\frakp:=\frakl_\zeta+\fraku^+$ be the corresponding parabolic subalgebra.
  We remark that, if one starts with a character $\zeta\in \ch {\mf n^+_\oa}$ which leads to a parabolic decomposition in \eqref{eq::paracomp}, then there exists a compatible  triangular decomposition of $\g$; see \cite[Lemma 1.3]{CCC}.
  We note that $H$ lies in the center of $\mf l_\zeta$.    In this article we exclude the family of queer Lie superalgebras.

 Motivated by the equivalences of categories in the earlier works of Bernstein-Gelfand \cite{BG} and  Mili{\v{c}}i{\'c}-Soergel \cite{MS}, which connect modules in $\mc O$ and Whittaker modules in $\mc N$, an equivalence is established in \cite{Ch21, Ch212} between $\mc W(\zeta)$ and a certain full subcategory $\mc \Opres$ of $\mc O_\Z$ (see Section \ref{Sect::Opres}). The objects in $\mc \Opres$ admit a two step resolution by projective modules whose simple quotients are free $U(\g_{-\alpha})$-modules, for any $\alpha\in \Pi_\zeta$. 

  \subsection{Goal} An important task in the study of Whittaker modules is to find the composition factors of a given standard Whittaker module. In the case of reductive Lie algebras,  Mili{\v{c}}i{\'c} and Soergel developed in \cite{MS}  an equivalence between $\mc N(\zeta)$ and certain category of Harish-Chandra bimodules to provide an answer for the composition multiplicity of $M(\la,\zeta)$ for regular and integral weights $\la$.  Subsequently, Backelin in \cite{B} introduced an exact functor $\Gamma_\zeta$ from  $\mc O$ to $\mc N(\zeta)$, sending Verma modules to standard Whittaker modules, to give a complete solution in terms of Kazhdan-Lusztig polynomials.

Standard Whittaker modules for Lie superalgebras of type I  was originally considered in \cite{BCW}, which are of finite length.  However, as a complete classification of simple Whittaker modules was not available at that time, limited effort had been made to resolve the problem of finding composition multiplicity for standard Whittaker modules over Lie superalgebras  until recently. It is proved in \cite{Ch21} that in type I case this multiplicity problem reduces to that of Verma modules in $\mc O$; see also \cite{BrG} for a special non-singular case. In particular, in the case of $\g=\gl(m|n)$ these multiplicities can be computed by means of Brundan-Kazhdan-Lusztig polynomials \cite{Br, CLW2}; see also \cite{CCM2}.

When the Lie superalgebra is not of type I standard Whittaker modules similarly as in McDowell \cite{Mc,Mc2} can also be defined under the assumption that $\zeta$ satisfies the assumption in Section \ref{sect::assumption}. This is done in \cite{Ch21} and also in the present paper.

   By \cite[Theorem 6]{Ch21},  for any $\la \in \mf h^\ast$, the top of the standard Whittaker $M(\la, \zeta)$ is simple, which we  denote by $L(\la,\zeta)$. These simple Whittaker modules are classified in terms of orbits in $\h^\ast$ under the dot-action of the Weyl group $W_\zeta$ of $\mf l_\zeta$, and they constitute all simple objects in $\mc N(\zeta)$. One of the main objectives of this paper is to provide a complete answer to the composition multiplicity problem for standard Whittaker modules over quasi-reductive Lie superalgebras beyond type I.


 \subsection{Main results}
  For each $M\in \mc O$, we denote the completion of $M$ with respect to its weight spaces by  $\ov M:=\prod_{\la\in \mf h^\ast} M_\la$. Then, the module $\Gamma_\zeta(M)$ is defined to be the submodule of $\ov{M}$ consisting of vectors annihilated by a power of $x-\zeta(x)$, for all $x\in\mf n^+_\oa$. Then $\Gamma_\zeta$ defines an exact functor  from $\mc O$ to $\mc N(\zeta)$, which we refer to as the {\em Backelin functor} associated to $\zeta$ for $\g$. Let $M(\la)$ be the Verma module of highest weight $\la$ with respect to the triangular decomposition \eqref{eq::tri} and $L(\la)$ its unique simple quotient.
   Our   first main result is the following.
\begin{thm}[Proposition \ref{prop::3}, Theorem \ref{thm::4}-(ii)] \label{thm::mainthm}
	For any weight $\la\in \h^\ast$, the Backelin functor  $\Gamma_\zeta(-)$ sends $M(\la)$ to $M(\la,\zeta)$. Furthermore, $\Gamma_\zeta(-)$ sends $L(\la)$ to $L(\la,\zeta)$, if $\la$ is $W_\zeta$-anti-dominant, and to zero otherwise.
	
	Consequently, we have the following multiplicity formula:
	\begin{align}
		&[M(\la,\zeta): L(\mu,\zeta)] = [M(\la): L(\mu)],
	\end{align} for $W_\zeta$-anti-dominant weights $\la, \mu$.
\end{thm}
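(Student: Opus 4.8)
The plan is to derive everything from two facts about the exact functor $\Gamma_\zeta$, after which the multiplicity formula is a short Grothendieck-group computation. Assume for the moment that $\Gamma_\zeta(M(\la))\cong M(\la,\zeta)$ and that $\Gamma_\zeta(L(\la))$ equals $L(\la,\zeta)$ when $\la$ is $W_\zeta$-anti-dominant and is $0$ otherwise. Since the modules $L(\nu,\zeta)$, with $\nu$ running over $W_\zeta$-anti-dominant weights, are precisely the pairwise non-isomorphic simple objects of $\mathcal N(\zeta)$, applying the exact functor $\Gamma_\zeta$ to a composition series of $M(\la)$ gives in the Grothendieck group
\[
[M(\la,\zeta)]=\sum_{\nu}[M(\la):L(\nu)]\,[\Gamma_\zeta L(\nu)]=\sum_{\nu\ W_\zeta\text{-anti-dom}}[M(\la):L(\nu)]\,[L(\nu,\zeta)],
\]
and comparing coefficients of $[L(\mu,\zeta)]$ for $W_\zeta$-anti-dominant $\mu$ gives $[M(\la,\zeta):L(\mu,\zeta)]=[M(\la):L(\mu)]$. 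So the theorem reduces to the two displayed facts.

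For $\Gamma_\zeta(M(\la))\cong M(\la,\zeta)$ I would use the parabolic decomposition $\mathfrak g=\mathfrak u^-\oplus\mathfrak l_\zeta\oplus\mathfrak u^+$ and the parabolic $\mathfrak p=\mathfrak l_\zeta+\mathfrak u^+$. Both modules are parabolically induced: $M(\la)=\Ind_{\mathfrak p}^{\mathfrak g}M^{\mathfrak l_\zeta}(\la)$ with $M^{\mathfrak l_\zeta}(\la)$ the $\mathfrak l_\zeta$-Verma inflated to $\mathfrak p$ (so $\mathfrak u^+$ acts by $\zeta|_{\mathfrak u^+}=0$), and, by definition of the standard Whittaker module, $M(\la,\zeta)=\Ind_{\mathfrak p}^{\mathfrak g}M^{\mathfrak l_\zeta}(\la,\zeta)$, where $\zeta$ now restricts to a non-degenerate character of the nilradical of the reductive Lie algebra $\mathfrak l_\zeta$. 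Thus it suffices to prove the compatibility
\[
\Gamma_\zeta^{\mathfrak g}\circ\Ind_{\mathfrak p}^{\mathfrak g}\ \cong\ \Ind_{\mathfrak p}^{\mathfrak g}\circ\,\Gamma_\zeta^{\mathfrak l_\zeta}
\]
on the relevant modules and then invoke Backelin's theorem in the non-degenerate (Kostant) case to get $\Gamma_\zeta^{\mathfrak l_\zeta}(M^{\mathfrak l_\zeta}(\la))=M^{\mathfrak l_\zeta}(\la,\zeta)$ over $\mathfrak l_\zeta$. For the compatibility one uses the PBW identification $\Ind_{\mathfrak p}^{\mathfrak g}N\cong U(\mathfrak u^-)\otimes_{\mathbb C}N$: since $\mathfrak u^-$ is finite dimensional with negative weights, each $\mathfrak h$-weight space of $\Ind_{\mathfrak p}^{\mathfrak g}N$ involves only finitely many weight spaces of $N$ and of $U(\mathfrak u^-)$, so passing to weight completions amounts to completing in the $N$-factor alone; local $\mathfrak u^+_{\bar 0}$-finiteness is automatic there ($\mathfrak u^+$ is nilpotent and $\zeta|_{\mathfrak u^+}=0$) and moreover confines such a vector to bounded $U(\mathfrak u^-)$-degree, after which local finiteness for the remaining generators $\mathfrak n^+_{\bar 0}\cap\mathfrak l_\zeta$ reduces to the same condition inside $N$. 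Carrying out this completion bookkeeping carefully is the technical heart of the argument and the point I expect to be most delicate.

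For $\Gamma_\zeta(L(\la))$, applying the exact functor to $M(\la)\twoheadrightarrow L(\la)$ together with the previous step shows $\Gamma_\zeta(L(\la))$ is a quotient of $M(\la,\zeta)$, hence is $0$ or has simple head $L(\la,\zeta)$, the simple top of $M(\la,\zeta)$. If $\la$ is not $W_\zeta$-anti-dominant, choose $\alpha\in\Pi_\zeta$ with $\langle\la,\alpha^\vee\rangle\in\mathbb Z_{\ge 0}$; then $L(\la)$ is locally finite over the $\mathfrak{sl}_2$-subalgebra $\mathfrak s_\alpha$ generated by $\mathfrak g_{\pm\alpha}$ ($e_\alpha$ acts locally nilpotently as $L(\la)\in\mathcal O$, and $f_\alpha$ acts locally nilpotently because $\alpha$ is a simple even root with $\langle\la,\alpha^\vee\rangle\ge 0$). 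Decomposing $L(\la)$ into finite-dimensional $\mathfrak s_\alpha$-submodules embeds its weight completion $\overline{L(\la)}$ into a product of these, on which $e_\alpha-\zeta(e_\alpha)$ is injective since $\zeta(e_\alpha)\ne 0$ while $e_\alpha$ is nilpotent on each summand; hence no nonzero vector of $\overline{L(\la)}$ is annihilated by a power of $e_\alpha-\zeta(e_\alpha)$, so $\Gamma_\zeta(L(\la))=0$. (Equivalently, $L(\la)$ is not $U(\mathfrak g_{-\alpha})$-free, hence does not lie in $\mathcal O^{\zeta\text{-pres}}$ and is killed by $\Gamma_\zeta$.) If $\la$ is $W_\zeta$-anti-dominant, then $\Gamma_\zeta(L(\la))\ne 0$: from the definitions $\Gamma_\zeta$ only uses the $\mathfrak h$-grading and the action of $\mathfrak n^+_{\bar 0}\subseteq\mathfrak g_{\bar 0}$, so $\Res_{\mathfrak g_{\bar 0}}\circ\Gamma_\zeta^{\mathfrak g}=\Gamma_\zeta^{\mathfrak g_{\bar 0}}\circ\Res_{\mathfrak g_{\bar 0}}$, and since $L^{\mathfrak g_{\bar 0}}(\la)$ is a composition factor of $\Res_{\mathfrak g_{\bar 0}}L(\la)$ with $W_\zeta$-anti-dominant highest weight, Backelin's theorem over $\mathfrak g_{\bar 0}$ gives $\Gamma_\zeta^{\mathfrak g_{\bar 0}}(\Res_{\mathfrak g_{\bar 0}}L(\la))\ne 0$. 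It remains to see that this nonzero quotient of $M(\la,\zeta)$ is exactly $L(\la,\zeta)$; here I would follow Backelin's argument in the reductive case, showing that $\Gamma_\zeta$ sends an indecomposable projective of $\mathcal O_{\mathbb Z}$ to an indecomposable projective of $\mathcal W(\zeta)$ or to $0$ and then, via exactness and a BGG-reciprocity computation $[\Gamma_\zeta(L(\la)):L(\mu,\zeta)]=\dim\Hom(P(\mu,\zeta),\Gamma_\zeta(L(\la)))$ with $P(\mu,\zeta)$ the projective cover of $L(\mu,\zeta)$, forcing every composition factor of $\Gamma_\zeta(L(\la))$ below the head to vanish. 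Establishing that $\Gamma_\zeta$ preserves indecomposable projectives — and hence this final simplicity statement — is the other step I expect to require genuine work.
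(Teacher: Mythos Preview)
Your reduction to the two key facts and the Grothendieck-group derivation of the multiplicity formula match the paper. Your approach to $\Gamma_\zeta(M(\la))\cong M(\la,\zeta)$ via the parabolic compatibility $\Gamma_\zeta^{\mathfrak g}\circ\Ind_{\mathfrak p}^{\mathfrak g}\cong\Ind_{\mathfrak p}^{\mathfrak g}\circ\Gamma_\zeta^{\mathfrak l_\zeta}$ is also the paper's argument in substance: there it is phrased as computing $\Res^{\mathfrak g}_{\mathfrak l_\zeta}\Gamma_\zeta(M(\la))$, using that $U(\mathfrak u^-)$ decomposes as a sum of finite-dimensional $\mathfrak l_\zeta$-modules so that $\Gamma_\zeta^{\mathfrak l_\zeta}$ commutes past the tensor product, and then identifying the top $H$-eigenspace with $Y_\zeta(\la,\zeta)$ to invoke Frobenius reciprocity. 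Your $\mathfrak{sl}_2$-argument for the vanishing when $\la$ is not $W_\zeta$-anti-dominant is correct but different from the paper's: the paper instead uses that for the anti-dominant $\la'\in W_\zeta\cdot\la$ one has $M(\la')\subseteq M(\la)$ with $\Gamma_\zeta(M(\la'))=\Gamma_\zeta(M(\la))$ by the Verma computation, so every composition factor of $M(\la)/M(\la')$, including $L(\la)$, is killed.

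The real divergence is in proving that $\Gamma_\zeta(L(\la))$ is \emph{simple} when it is nonzero. Your plan to first show that $\Gamma_\zeta$ sends indecomposable projectives to indecomposable projectives and then back out simplicity via a reciprocity count is substantially harder than what the paper does, and in this paper's logical order would be circular: the projectivity of $\Gamma_\zeta(P(\la))$ in $\mathcal W(\zeta)$ is only obtained later, from the identification $\Gamma_\zeta\cong F_\zeta$ with the Serre quotient functor. Backelin's original route for reductive $\mathfrak g$ leans on regular blocks where anti-dominant Vermas are themselves projective, a phenomenon unavailable in the super setting. The paper bypasses all of this with a short Whittaker-vector argument: embed $M(\la)$ into the full dual $N(-\la)^\ast$, let $S$ be the image, and observe that $\Gamma_\zeta(S)$ sits inside $N(-\la)^\ast$, where $\dim Wh_\zeta(N(-\la)^\ast)=1$ because $N(-\la)^\ast\cong U(\mathfrak n^+)^\ast$ as an $\mathfrak n^+$-module. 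Thus $\Gamma_\zeta(S)$ is generated by its unique Whittaker vector and any nonzero submodule must contain it, so $\Gamma_\zeta(S)$ is already simple and equal to $L(\la,\zeta)$; since $L(\la)$ is a quotient of $S$, $\Gamma_\zeta(L(\la))$ is then $0$ or $L(\la,\zeta)$. This one-dimensionality of Whittaker vectors in the full dual is the idea your proposal is missing, and it replaces the projective machinery entirely.
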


Any Lie superalgebra from the series $\gl(m|n),~{\mf{sl}}(m|n),~{\mf{osp}}(2|2n)$ and $\mf p(n)$ in \eqref{eq::Kaclist} admits a $\Z_2$-compatible gradation $\g= \g_{-1}\oplus \g_0 \oplus\g_1$,  i.e., they are Lie superalgebras of  type I. In this case, Theorem \ref{thm::mainthm} recovers (and provides a uniform proof of) \cite[Thoerem C]{Ch21}.

Recently, an $\imath$-{\em canonical basis} theory based on a quantum symmetric pair $(U,U^\imath)$ has been systematically developed by Bao and Wang starting with the pioneering work \cite{BaoWang}, which extends Lusztig's canonical basis theory for quantum groups \cite{Lu90,Lu92}. The main motivation of \cite{BaoWang} was to resolve the irreducible character problem in  the integral BGG category $\mc O_\Z$ for the ortho-symplectic Lie superalgebra. For $\mf{osp}(2m+1|2n)$ the $\imath$-canonical basis theory of the associated $\imath$-quantum group $U^\imath$ (see Section \ref{sec:QSP:def}) on a completed Fock space $\wbT^{m|n}$ (see Section \ref{sec:tmn}) provides a complete solution to this problem in subcategory $\mc O^\texttt{int}_\Z$ of integer weight modules (see Section \ref{sec:ortho}). In this case, the corresponding quantum symmetric pair $(U,U^\imath)$ is quasi-split of type $AIII$. In particular, the linear isomorphism $\psi$ from $\wbT^{m|n}$ to a corresponding completed Grothendieck group $\widehat{K}(\mc O^\texttt{int}_\bbZ)_\bbQ$, sending standard monomial basis elements to Verma modules, match the $\imath$-canonical and dual $\imath$-canonical bases with tilting and simple objects, respectively. A similar solution to the irreducible character problem in the subcategory $\mc O^\texttt{hf}_\Z$ of half-integer weight modules (see again Section \ref{sec:ortho}) is given in terms of so-called $\jmath$-canonical basis \cite[Chapter 12]{BaoWang}. Note that this together solves the irreducible character problem in $\mc O_\Z$, as other integral weights are typical.

In the present paper we study a certain $q$-symmetrized Fock space $\wbT^{m|n}_{\zeta}$, depending on the character $\zeta$, which can be regarded either as a $U^\imath$-submodule or as a $U^\imath$-quotient of $\wbT^{m|n}$. There is a canonical $q$-symmetrizer map $\phi_\zeta:\wbT^{m|n}\rightarrow\wbT^{m|n}_\zeta$. Building on the works of \cite{BaoWang}, we construct in a natural way $\imath$-canonical and dual $\imath$-canonical bases on this $q$-symmetrized Fock space. We show that the $U^\imath$-module $\wbT^{m|n}_\zeta$ is categorified by an ``integer weight'' subcategory $\mc W(\zeta)^\texttt{int}$ (see Section \ref{Sect::thm:match:can}) of our category $\mc W(\zeta)$, when $\mf g$ is the ortho-symplectic Lie superalgebra $\mf{osp}(2m+1|2n)$. The linear action of $U^\imath$ on the Fock space at $q=1$ translates to action of certain translation functors on the category. We denote by $\psi_\zeta$ the induced isomorphism from $\wbT^{m|n}_{\zeta,q=1}$ to the completed Grothendieck group $\widehat{K}(\mc W(\zeta)^{\texttt{int}})_\bbQ$. We can now state our second main result, which is equivalent to Theorem \ref{thm:match:can} below.

\begin{thm} \label{thm::mainthmB} Let $\g$ be $\mf{osp}(2m+1|2n)$. Let $\gamma_\zeta:\widehat{K}(\mc O^{\texttt{int}}_\bbZ)_\bbQ\rightarrow \widehat{K}(\mc W(\zeta)^{\texttt{int}})_\bbQ$ be the map induced by the Backelin functor $\Gamma_\zeta$.  We have the following commutative diagram of $U^\imath_{q=1}$-homomorphisms:	
\begin{eqnarray}\label{comm:diag}
\CD
\xymatrixcolsep{3pc} \xymatrix{
  	\widehat{\mathbb T}^{m|n}_{q=1}  \ar[r]^-{\psi}  \ar@<-2pt>[d]_{\phi_\zeta} &   \widehat{K}(\mc O^{\texttt{int}}_\Z)_\bbQ  \ar@<-2pt>[d]^{\gamma_\zeta} \\
  	\widehat{\mathbb T}^{m|n}_{\zeta,q=1} \ar[r]^-{\psi_\zeta}  &  \widehat{K}(\mc W(\zeta)^{\texttt{int}})_\bbQ}
\endCD
\end{eqnarray}
Furthermore, the map $\psi_\zeta$ matches $\imath$-canonical and dual $\imath$-canonical basis with the tilting and simple objects in $\mc W(\zeta)^{\texttt{int}}$, respectively.
	
\end{thm}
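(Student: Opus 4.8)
The plan is to derive the theorem from Theorem~\ref{thm::mainthm} and the $\imath$-canonical basis results of Bao--Wang \cite{BaoWang} on $\wbT^{m|n}$, transported along the $q$-symmetrizer $\phi_\zeta$. First I would check that $\gamma_\zeta$ is a well-defined continuous $U^\imath_{q=1}$-homomorphism on the completed Grothendieck groups. The Backelin functor $\Gamma_\zeta$ is exact, and it commutes with the projections onto blocks of fixed $Z(\mf g)$- and $Z(\mf g_\oa)$-central character, because the locally $(x-\zeta(x))$-nilpotent vectors in $\ov M$ form a $\mf g$-submodule and the weight-space completion does not alter a given central character. Moreover, for any finite-dimensional $\mf g$-module $E$ there is a natural isomorphism $\Gamma_\zeta(M\otimes E)\cong\Gamma_\zeta(M)\otimes E$, since finite-dimensionality of $E$ makes $-\otimes E$ commute with both the completion and the passage to Whittaker vectors; this is Backelin's argument in \cite{B}, which is insensitive to the $\Z_2$-grading. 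Consequently $\Gamma_\zeta$ intertwines the translation functors on $\mc O^{\texttt{int}}_\Z$ that categorify the $U^\imath_{q=1}$-action with the corresponding functors on $\mc W(\zeta)^{\texttt{int}}$; being exact and sending each simple to a simple or to $0$ by Theorem~\ref{thm::mainthm} (so that locally finite combinations of simples are preserved), it induces the asserted map $\gamma_\zeta$, which is then automatically $U^\imath_{q=1}$-linear.

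Next I would prove that the square commutes by evaluating both composites on the standard monomial basis of $\wbT^{m|n}_{q=1}$, which topologically spans; both $\gamma_\zeta\circ\psi$ and $\psi_\zeta\circ\phi_\zeta$ are continuous $U^\imath_{q=1}$-maps, so this suffices. On the basis vector indexed by $\la$, the composite $\gamma_\zeta\circ\psi$ gives $[M(\la)]\mapsto[\Gamma_\zeta M(\la)]=[M(\la,\zeta)]$ by Theorem~\ref{thm::mainthm}, while $\psi_\zeta\circ\phi_\zeta$ gives, by the definitions of $\phi_\zeta$ and $\psi_\zeta$, the standard basis vector of $\wbT^{m|n}_\zeta$ attached to the $W_\zeta$-orbit of $\la$ and then $[M(\la,\zeta)]$ (recall $M(\la,\zeta)\cong M(w\cdot\la,\zeta)$ for $w\in W_\zeta$, so this class depends only on the orbit). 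Hence the square commutes; because $\phi_\zeta$ is a surjective $U^\imath_{q=1}$-map, $\psi_\zeta$ is forced to be $U^\imath_{q=1}$-linear, and it is an isomorphism as the standard Whittaker classes form a topological basis of $\widehat K(\mc W(\zeta)^{\texttt{int}})_\bbQ$. The matching of the dual $\imath$-canonical basis with simple objects is then immediate: by the construction of the bases on $\wbT^{m|n}_\zeta$, a dual $\imath$-canonical basis vector of $\wbT^{m|n}_\zeta$ is the $\phi_\zeta$-image of a dual $\imath$-canonical basis vector of $\wbT^{m|n}$, whose $\psi$-image is $[L(\la)]$ by \cite{BaoWang}; chasing the square and invoking Theorem~\ref{thm::mainthm} ($\Gamma_\zeta L(\la)=L(\la,\zeta)$ for $W_\zeta$-anti-dominant $\la$) yields $[L(\la,\zeta)]$, and every simple of $\mc W(\zeta)^{\texttt{int}}$ occurs this way by the classification of simple Whittaker modules.

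The remaining, and hardest, part is the matching of the $\imath$-canonical basis with tilting objects. By the same chase, using \cite{BaoWang} (which matches the $\imath$-canonical basis of $\wbT^{m|n}$ with the indecomposable tiltings of $\mc O^{\texttt{int}}_\Z$) and the compatibility of $\phi_\zeta$ with $\imath$-canonical bases, $\psi_\zeta$ sends the $\imath$-canonical basis vector of $\wbT^{m|n}_\zeta$ indexed by a $W_\zeta$-anti-dominant $\la$ to $[\Gamma_\zeta T(\la)]$, so the claim reduces to $[\Gamma_\zeta T(\la)]=[T(\la,\zeta)]$. I would prove this by matching two pieces of linear algebra. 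On the combinatorial side, the $\imath$-canonical and dual $\imath$-canonical bases of $\wbT^{m|n}_\zeta$ are dual with respect to the standard-basis pairing, so at $q=1$ the transition matrix from the standard basis to the $\imath$-canonical basis is the inverse transpose of the one to the dual $\imath$-canonical basis. On the categorical side, the standardly stratified structure of $\mc W(\zeta)^{\texttt{int}}$ (from \cite{MS,Ch21}, realized here via the equivalence with $\Opres$) satisfies a BGG-type reciprocity, so the tilting multiplicities $(T(\la,\zeta):M(\nu,\zeta))$ are governed by the inverse transpose of the decomposition matrix $[M(\nu,\zeta):L(\mu,\zeta)]$. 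Feeding in the ``standard$\leftrightarrow$standard'' and ``dual $\imath$-canonical$\leftrightarrow$simple'' matchings already proved --- which identify the two decomposition matrices --- then gives $[\Gamma_\zeta T(\la)]=[T(\la,\zeta)]$. The main obstacle is precisely this structural input: establishing, beyond type I and in the super setting, that $\mc W(\zeta)^{\texttt{int}}$ is (standardly) stratified with the correct standard and costandard objects and a working BGG reciprocity, and pinning down the exact compatibility of $\phi_\zeta$ with the $\imath$-canonical bases --- none of which is contained in Theorem~\ref{thm::mainthm} itself. A more hands-on alternative is to show directly that $\Gamma_\zeta$ carries tilting modules to tilting modules (exactness together with $\Gamma_\zeta M(\mu)=M(\mu,\zeta)\neq0$ handles the standard Whittaker flags, and compatibility of $\Gamma_\zeta$ with the BGG-type duality handles the costandard ones), followed by a support-and-multiplicity count; in the type I case this is carried out in \cite{CCM2}.
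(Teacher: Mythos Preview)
Your argument for the commutativity of the diagram and the matching of dual $\imath$-canonical basis elements with simple Whittaker modules is essentially the paper's proof: both verify the square on the standard monomial basis via Proposition~\ref{prop::3} and Theorem~\ref{thm:szeta:fock}, and both chase the simple objects through using Theorem~\ref{thm::4}(ii) together with Bao--Wang.

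For the tilting matching, however, the paper takes a shorter and structurally different route than either of your two proposals. The key point you miss is the precise form of the compatibility of $\phi_\zeta$ with canonical bases: Theorem~\ref{thm:szeta:fock}(2) gives $\phi_\zeta(T_{f\cdot w_0^\zeta})=\mc T_f$, with a $w_0^\zeta$-shift, so the chase yields $\psi_\zeta(\mc T_{f_\la})=\gamma_\zeta\bigl([T^{\mc O}(w_0^\zeta\cdot\la)]\bigr)$, not $\gamma_\zeta([T^{\mc O}(\la)])$. The paper then invokes two structural results proved earlier: Proposition~\ref{prop:tilting}, which identifies the indecomposable tiltings of $\Opres$ precisely as the $\mc O$-tiltings $T^{\mc O}(w_0^\zeta\cdot\la)$ for $\la\in\Lnua$, and Corollary~\ref{cor::SQ}, which shows $\Gamma_\zeta\cong F_\zeta$ as Serre quotient functors, so that $\Gamma_\zeta|_{\Opres}:\Opres\to\mc W(\zeta)$ is an equivalence of categories. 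An equivalence transports indecomposable tiltings to indecomposable tiltings, and the proof is complete without any further flag or duality computation.

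Your approach (a) via inverse-transpose matrices is not quite right as stated: BGG reciprocity in $\Opres$ relates $(P(\la):\Delta(\mu))$ to $[M(\mu):L(\la)]$ (Corollary~\ref{cor:BGG}), while the tilting multiplicities are governed by Ringel duality with the twist $\la\mapsto -w_0^\zeta\cdot\la$ (Corollary~\ref{cor:ringel:dual}); the relationship is not the plain inverse transpose of the decomposition matrix, and you would need to track this twist carefully on both the Fock-space and categorical sides. Your approach (b) is workable in spirit, but besides the $\Delta$- and $\nabla$-flags you would still need to argue indecomposability of $\Gamma_\zeta T^{\mc O}(w_0^\zeta\cdot\la)$; the paper gets this for free from the equivalence $\Opres\cong\mc W(\zeta)$.
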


There is also an analogue of Theorem \ref{thm::mainthmB} in the case of half-integer weight subcategory $\mc W(\zeta)^\texttt{hf}$ of $\mc W(\zeta)$ formulated in Theorem \ref{thm:match:can1}. In this case, it is formulated in term of so-called $\jmath$-canonical basis. Also, we have counterparts for the Lie superalgebra $\mf{osp}(2m|2n)$. Therefore, Theorem \ref{thm::mainthmB}, its half-integer and their type $D$ counterparts together give an ortho-symplectic analogue of the results in \cite[Section 3.7]{CCM2} for type $A$ Lie superalgebras.

\subsection{Additional results}

\subsubsection{ Homomorphisms from $M(\la,\zeta)$ to full dual of Verma modules}
Suppose that $\g$ is a  Lie superalgebra in Kac's list \eqref{eq::Kaclist} with $\g\neq \pn$. For any $\la\in \h^\ast$, we define $N(\la)$ to be the Verma module of lowest weight $\la$ with respect to $\mf b$, namely, $N(\la):=\Ind_{\h+\mf n^-}^\g\C_\la$. We note that $N(-\la)^\ast\cong \Coind^\g_{\mf h+\mf n^-}\C_\la$; see, e.g., \cite[Chapter 4]{Sch}. Also, we let $M(\la)^{\ast,\tau}$ be the full dual of the Verma module $M(\la)$ with the action of $\g$ twisted by the Chevalley
antiautomorphism $\tau$ on $\g$.

In Theorem \ref{thm::4}(i)  we prove that, for $\la,\mu\in \h^\ast$ and  $M = N(-\mu)^\ast$ or $M(\mu)^{\ast,\tau}$,  we have 	\begin{align}
		&\Hom_\g(M(\la,\zeta), M)= \left\{\begin{array}{ll}
			\C, &  \text{~for } \mu\in W_\zeta\cdot \la;\\
			0, & \text{ otherwise}.
		\end{array} \right.
	\end{align}
Here $W_\zeta\cdot \la$ denotes the orbit of $\la$ under the dot-action $\cdot$ of the Weyl group $W_\zeta$ of $\mf l_\zeta$ on $\mf h^\ast$.
In particular, this extends  \cite[Theorem 5.2]{BR}, where the case of semisimple Lie algebras with regular weight $\la$ was considered.

\subsubsection{A realization of the Serre quotient functor} 

The categories $\mc W(\zeta)$ and $\Opres$ fit naturally into the framework of the Serre quotient category $\OI$ in the sense of Gabriel  \cite{Gabriel}, which is a localization of $\mc O_\Z$ with the canonical quotient functor $\pi: \mc O_\Z\rightarrow \OI$ with respect to the set of homomorphisms with kernels and cokernels lying in a given Serre subcategory. In the case of Lie superalgebras of type I, it is proved in \cite{CCM2} that the Backelin functor $\Gamma_\zeta(-)$ is a realization of the quotient functor $\pi$. In this paper, we extend this result to any Lie superalgebras $\g$ in Kac's list \eqref{eq::Kaclist}. 

\subsubsection{Annihilator ideals of simple Whittaker modules}
 It is proved that every primitive ideal of a quasi-reductive Lie superalgebra is the annihilator of some simple highest weight modules; see, e.g.,  \cite{Duflo, Le89, Mu92, CC}.  In the case of $W_\zeta=W$,  Kostant \cite[Theorem 3.9]{Ko78} give a description of the annihilator ideals for simple Whittaker modules over semisimple Lie algebras. This has been extended to Lie superalgebras of type I in \cite[Theorem B]{Ch212}. In this paper, we will  prove an analogue for any Lie superalgebra $\g$ in Kac's list \eqref{eq::Kaclist}.

	\subsection{Organization} The paper is organized as follows. In Section \ref{Sect::2}, we provide the necessary preliminaries. In particular, we introduce various (equivalent) abelian categories, including the Whittaker categories $\mc W(\zeta)$. In Section \ref{Sect::3}, we give a proof of Theorem \ref{thm::mainthm}.
	Section \ref{Sect::4} is devoted to a description of the stratified structure on $\mc W(\zeta)$.  In particular, we describe explicitly the tilting modules in $\mc W(\zeta)$ and prove a BGG type reciprocity for $\mc W(\zeta)$.  In Section \ref{Sect::QSPcan} we study a $q$-symmetrized Fock space over the $\imath$-quantum group mentioned earlier and construct its $\imath$-canonical and the dual $\imath$-canonical bases on this space.  We briefly recall in Section \ref{sec:ortho} the solution of the irreducible character problem for the ortho-symplectic Lie superalgebras in category $\mc O$, which we then use in Section \ref{Sect::7} to complete the proof of Theorem \ref{thm::mainthmB}.

\vskip 0.5cm

{\bf Acknowledgment}. The authors are partially supported by National Science and Technology Council grants of the R.O.C., and they further acknowledge support from the National Center for Theoretical Sciences.

\section{Preliminaries} \label{Sect::2}
In this section, we review the necessary background and definitions that will be used in the remainder of the paper.

\subsection{Root systems and Weyl group}
 Let $\Phi$ be the set of all roots. Denote by $\Pi$, $\Phi^\pm$, $\Phi_{\oa}$ and $\Phi_\ob$ the sets of simple, positive/negative, even and odd roots of $\g$, respectively. Define $\Phi_\epsilon^\pm:=\Phi_\epsilon\cap \Phi^\pm$, $\epsilon=\bar{0},\bar{1}$. Set $\Pi_\oa$ to be the simple system for $\Phi_\oa^+$. For each $\alpha \in \Phi$, we denote by $\g_\alpha$ the root space of $\g$ corresponding to $\alpha$. The corresponding {\em Borel subalgebra} is $\mf b=\mf h\oplus \mf n^+$, with $\mf n^+=\bigoplus_{\alpha\in\Phi^+}\g_\alpha$.

Recall that we fixed a character $\zeta$ that gives rise to a Levi subalgebra $\mf l_\zeta$ of $\g$ as given in Section \ref{sect::assumption}.
 Also, recall that the set of all simple roots of $\mf l_\zeta$ is denoted as follows \begin{align*}
 	\Pi_\zeta:=\{\alpha\in \Pi_\oa|~\zeta(\mf g_\alpha)\neq 0\}.
 \end{align*}
We note that $\zeta([\mf n^+_\ob, \mf n^+_\ob])=0$, the latter implies that $\zeta$ extends trivially to a one-dimensional $\mf n^+$-module. We also denote by $\zeta$ the induced algebra homomorphism from $U(\mf n^+)$ to $\C$.

The Weyl group $W$ is defined to be the Weyl group of $\mf g_\oa$, and it acts naturally on $\h^\ast$. For each $\alpha \in \Pi_\oa$, let $s_\alpha\in W$ be the simple reflection corresponding to $\alpha$. We consider the {\em dot-action} of $W$ on $\h^\ast$, that is, $w\cdot \la:= w(\la+\rho_\oa)-\rho_\oa$, for any $w\in W$ and $\la \in \h^\ast$. Here $\rho_\oa$ denotes the half-sum of all positive even roots.  Also, we let $W_\zeta$ denote the Weyl group of $\mf l_\zeta$. We denote by $w_0$ and $w_0^\zeta$ the longest elements in $W$ and $W_\zeta$, respectively.

Fix a $W$-invariant  non-degenerate bilinear form $(\cdot,\cdot)$ on $\h^\ast$. For each $\alpha\in \Phi_\oa^+$, we set $\alpha^\vee:=2\alpha/(\alpha,\alpha)$.  A weight $\la$ is called {\em integral} if $(\la,\alpha^\vee)\in \Z$ for all roots $\alpha\in \Phi_\oa$. Let $\Lambda$ be the set of integral weights. For any given $\alpha\in \Phi_\oa^+$, a weight $\la$ is called $\alpha$-{\em dominant} (respectively, $\alpha$-{\em anti-dominant}) if $(\la+\rho_\oa,\alpha^\vee)\notin \Z_{<0}$ (respectively,  $(\la+\rho_\oa,\alpha^\vee)\notin \Z_{>0}$). We denote by $\Lnua$ the set of all integral weights that are $\alpha$-anti-dominant, for all $\alpha\in \Pi_\zeta$.

\subsection{Induction and restriction functors}
We denote by $U(\g)$ the universal enveloping algebra of $\g$ and $Z(\g)$ its center. For any $\la\in \h^\ast$, we denote by $\chi_\la: Z(\g)\rightarrow \C$ the central character associated with $\la$. Denote by $\g\mod$ the category of all $\g$-modules.  For any subalgebra $\mf s\subseteq \g$, we have the exact restriction, induction and coinduction functors
\begin{align}
	\Res_{\mf s}^{\g}(-): \g\mod \rightarrow \mf s\mod,~\Ind_{\mf s}^{\g}(-), \Coind_{\mf s}^{\g}(-):  \mf s\mod \rightarrow \g\mod.
\end{align}

In the case $\g_\oa=\mf s$, we use the notations $\Res(-):= \Res_{\g_\oa}^\g(-)$, $\Ind(-):= \Ind_{\g_\oa}^\g(-)$ and $\Coind(-):= \Coind_{\g_\oa}^\g(-)$.  By \cite[Theorem 2.2]{BF} (see also \cite{Go}), $\Ind(-)$ is isomorphic to $\Coind(-)$ up to tensoring with the one-dimensional $\g_\oa$-module  $\wedge^{\dim\g_\ob}(\g_\ob)$.

For any module $M$ having a composition series, we will use $[M : L]$ to denote
the Jordan-H\"older multiplicity of the simple module $L$ in a composition
series of $M$.

\subsection{BGG category $\mc O$} We denote by  $\mc O$ the BGG category  with respect to the triangular decomposition \eqref{eq::tri}. Namely, $\mc O$ consists of finitely-generated $\g$-modules that are semisimple over $\h$ and locally finite over $\mf n^+$. We shall denote the corresponding BGG category of $\g_\oa$-modules by $\mc O_\oa$. For $\la\in \h^\ast$, we denote by $M(\la)$ the Verma module of $\frakb$-highest weight $\la$ and by $L(\la)$ its unique simple quotient.   Denote by $P(\la)$ the projective cover of $L(\la)$ in $\mc O$. We will use the $0$-subscript convention to denote the respective objects in $\mc O_\oa$, e.g., $P_0(\la)$ is the projective cover of $L_0(\la)$ in $\mc O_\oa$.

Denote by $\mc O_\Z$ the integral BGG subcategory, i.e., the full subcategory of $\mc O$ consisting of all modules having integral weight spaces.

We denote by $\mc O(\mf l_\zeta)$ the BGG category of $\mf l_\zeta$-modules   with respect to the Borel subalgebra $\mf l_\zeta\cap \mf b$. Similarly, we shall use the $\mf l_\zeta$-subscript to denote respective $\mf l_\zeta$-modules, e.g., we define the Verma module $M_{\mf l_\zeta}(\la)$ and its simple quotient $L_{\mf l_\zeta}(\la)$ in the category $\mc O(\mf l_\zeta)$. Denote by $P_{\mf l_\zeta}(\la)$ the projective cover of $L_{\mf l_\zeta}(\la)$ in $\mc O(\mf l_\zeta)$. Similarly, we denote by $\mc O(\mf l_\zeta)_\Z$ the integral BGG subcategory of $\mc O(\mf l_\zeta)$.

\subsection{The category $\Opres$} \label{Sect::Opres}

A projective module in $\mc O$ (respectively, $\mc O_\oa$) is called {\em admissible} if any of its simple quotients is of the form $L(\la)$ (respectively, $L_0(\la)$) with $\la\in \Lnua$. We denote by $\Opres$ the full subcategory of $\mc O_\Z$ consisting of modules $M$ that have a two step resolution of the form
\begin{align}
&P_1\rightarrow P_2\rightarrow M \rightarrow 0,
\end{align} where $P_1,P_2$ are admissible projective modules in $\mc O$.
Similarly, we define the full subcategory $\Opres_\oa$ of $\mc O_\oa$ consisting of all  modules that have $2$-step resolutions by direct sums of projective modules of the form $P_0(\la)$ with $\la\in \Lnua$.

Fix $\alpha\in \Pi_\zeta$.  A $\g$-module M is said to be {\em $\alpha$-finite} (respectively, {\em $\alpha$-free}) if the action of non-zero root vectors in  $\g_{-\alpha}$ on $M$ is locally finite (respectively, injective). For $\alpha\in \Pi_\zeta$, we have $(\la+\rho,\alpha^\vee)=(\la+\rho_{\mf l_\zeta},\alpha^\vee)=(\la+\rho_\oa,\alpha^\vee)$,  where $\rho_{\mf l_\zeta}$ denotes the half-sum of positive roots in $\mf l_\zeta$, and hence $L(\la)$ is $\alpha$-free if and only if $L_0(\la)$ is $\alpha$-free if and only if $\la\in \Lnua$, see, e.g., \cite[Lemma 2.1]{CoM1}.

\begin{lem}\label{lem:admissible}
If $P(\la)$ is admissible, then $\Res P(\la)$ is admissible. Conversely, if $P_0(\la)$ is admissible, then $\Ind P_0(\la)$ is admissible. Thus, the functors $\Ind(-)$ and $\Res(-)$ form an adjoint pair of additive functors between the full subcategories  $\Opres_\oa$ and $\Opres$.
\end{lem}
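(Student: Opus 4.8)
The plan is to establish the two statements of Lemma~\ref{lem:admissible} separately and then deduce the adjunction as a formal consequence. First I would prove that $\Res$ preserves admissibility. By definition, $P(\la)$ admissible means all its simple quotients are of the form $L(\mu)$ with $\mu\in\Lnua$. The restriction $\Res P(\la)$ is a projective $\g_\oa$-module, since $U(\g)$ is free over $U(\g_\oa)$; so it decomposes as a direct sum of various $P_0(\nu)$, and the task is to show each such $\nu$ lies in $\Lnua$. The key tool is the characterization recalled just before the lemma: for $\alpha\in\Pi_\zeta$, a projective $P_0(\nu)$ has $\nu\in\Lnua$ (for that particular $\alpha$) precisely when $P_0(\nu)$ is $\alpha$-free, equivalently $\alpha$-free as opposed to merely $\alpha$-finite. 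So it suffices to show $\Res P(\la)$ is $\alpha$-free for every $\alpha\in\Pi_\zeta$. Since $\g_{-\alpha}\subseteq\g_\oa$, the action of a root vector $f_\alpha\in\g_{-\alpha}$ on $\Res P(\la)$ is the same as on $P(\la)$, so $\alpha$-freeness is a property insensitive to restriction; and $P(\la)$ is $\alpha$-free exactly because all its simple subquotients—in particular its simple quotients $L(\mu)$, $\mu\in\Lnua$, but one needs all subquotients—are $\alpha$-free. Here I would use that admissibility of $P(\la)$ forces \emph{every} composition factor $L(\mu)$ of $P(\la)$ to have $\mu\in\Lnua$: this follows from the fact that $\Lnua$ is the set of $\alpha$-anti-dominant integral weights for all $\alpha\in\Pi_\zeta$, which is a ``downward closed'' condition under the linkage relevant to $P(\la)$ (the projective cover of an $L(\mu)$ with $\mu\in\Lnua$ has, in the relevant block, only such composition factors because being $\alpha$-free is preserved under extensions and submodules/quotients). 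Granting that, $P(\la)$ is $\alpha$-free for each $\alpha\in\Pi_\zeta$, hence so is $\Res P(\la)$, hence each summand $P_0(\nu)$ is $\alpha$-free, hence $\nu\in\Lnua$, i.e.\ $\Res P(\la)$ is admissible.

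Next I would prove the converse: if $P_0(\la)$ is admissible then $\Ind P_0(\la)$ is admissible. The module $\Ind P_0(\la)=U(\g)\otimes_{U(\g_\oa)}P_0(\la)$ is projective in $\mc O$ since $\Ind$ is left adjoint to the exact functor $\Res$ and sends projectives to projectives. One then needs that its simple quotients are all of the form $L(\mu)$ with $\mu\in\Lnua$. I would again argue via $\alpha$-freeness: for $\alpha\in\Pi_\zeta$, $\g_{-\alpha}\subseteq\g_\oa$ acts on $\Ind P_0(\la)\cong U(\g_\ob)\otimes P_0(\la)$ (as $\g_\oa$-modules, by the PBW theorem) and, because $\g_\ob$ is a semisimple—in particular finite-dimensional—$\g_\oa$-module and $P_0(\la)$ is $\alpha$-free, the tensor product $\wedge^\bullet(\g_\ob)\otimes P_0(\la)$ is $\alpha$-free: tensoring an $\alpha$-free (equivalently, free over $U(\g_{-\alpha})\cong\bbC[f_\alpha]$) module with a finite-dimensional one keeps it free over $\bbC[f_\alpha]$, since $\bbC[f_\alpha]$ is a PID and a finitely generated torsion-free (indeed free) module tensored with finite-dimensional stays torsion-free. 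Hence $\Ind P_0(\la)$ is $\alpha$-free for all $\alpha\in\Pi_\zeta$, so each of its simple (projective-cover) summands $P(\mu)$ has $\mu\in\Lnua$ by the cited criterion, so $\Ind P_0(\la)$ is admissible.

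Finally, the adjunction statement: $\Ind$ and $\Res$ are already a standard adjoint pair $(\Ind,\Res)$ between $\g_\oa\mod$ and $\g\mod$ (tensor-hom adjunction), and both are additive and exact. Having shown $\Res(\Opres)\subseteq\Opres_\oa$ and $\Ind(\Opres_\oa)\subseteq\Opres$—where $\Opres$ (resp.\ $\Opres_\oa$) is by definition the full subcategory of modules admitting a two-step resolution by admissible projectives, so preservation of admissible projectives together with exactness yields preservation of the whole subcategory—the restricted functors form an adjoint pair between $\Opres_\oa$ and $\Opres$, with unit and counit inherited from the ambient adjunction. I expect the main obstacle to be the first half: namely, verifying cleanly that admissibility of $P(\la)$ (a condition on simple \emph{quotients}) propagates to all composition factors and hence to $\alpha$-freeness of the whole module, so that it is inherited by the $\g_\oa$-summands of $\Res P(\la)$. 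One should be careful that ``admissible'' as defined constrains only the top, and argue that in each linkage block the projective cover of an $\alpha$-free simple is itself $\alpha$-free—using that $\alpha$-freeness is closed under extensions and subquotients and that $\Lnua$ is a coideal for the relevant partial order—rather than assuming it. The super-side subtlety in the second half (keeping freeness over $\bbC[f_\alpha]$ under tensoring with $\wedge^\bullet\g_\ob$) is routine by the PID argument and should not cause trouble.
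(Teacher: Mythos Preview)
Your approach has a genuine gap in both halves that stems from a single misreading of the criterion quoted just before the lemma. The equivalence ``$\alpha$-free $\Leftrightarrow$ highest weight in $\Lnua$'' is stated for \emph{simple} modules $L(\la)$ and $L_0(\la)$, not for projectives. In fact every indecomposable projective $P_0(\nu)$ in $\mc O_\oa$ is $\alpha$-free for \emph{every} simple root $\alpha$: it is a direct summand of some $U(\g_\oa)\otimes_{U(\mf b_\oa)}E$ with $E$ finite-dimensional, hence free over $U(\mf n^-_\oa)$, hence free over $U(\g_{-\alpha})$. (Concretely, for $\mf{sl}_2$ with $\la$ dominant integral, $P_0(\la)=M_0(\la)$ is $\alpha$-free even though $\la\notin\Lnua$.) So the implication ``$P_0(\nu)$ $\alpha$-free $\Rightarrow \nu\in\Lnua$'' that you invoke at the end of each half is false, and the argument collapses. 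You even flag the right worry yourself---that admissibility only constrains the top---but the proposed fix via ``all composition factors lie in $\Lnua$'' is also false (the big projective in an $\mf{sl}_2$-block has the finite-dimensional, $\alpha$-finite simple in the middle).

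The paper avoids this trap by never testing $\alpha$-freeness on projectives. For the converse direction it argues directly with adjunction: if $L(\mu)$ is a simple quotient of $\Ind P_0(\la)$, then $\Hom_{\mc O_\oa}(P_0(\la),\Res L(\mu))\neq 0$, so $\Res L(\mu)$ has a composition factor $L_0(\la)$ which is $\alpha$-free; since $L(\mu)$ is irreducible, $\alpha$-freeness of one $\g_\oa$-composition factor forces $\alpha$-freeness of $L(\mu)$ itself, whence $\mu\in\Lnua$. For the forward direction it observes that $P(\la)$ is a summand of $\Ind P_0(\la)$ (via $\Hom_{\mc O}(\Ind P_0(\la),L(\la))\neq 0$), and then that $\Res\Ind P_0(\la)\cong \wedge(\g_\ob)\otimes P_0(\la)$ is an admissible $\g_\oa$-projective (tensoring an admissible projective by a finite-dimensional module stays admissible, as one checks on simple quotients). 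The moral: test $\alpha$-freeness on simple quotients, not on the projectives themselves.
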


\begin{proof}
Suppose that $P(\la)$ is admissible. We have $0\not=\Hom_{\mc O_\oa}(P_0(\la),\Res L(\la))\cong\Hom_{\mc O}(\Ind P_0(\la),L(\la))$, and hence $P(\la)$ is a direct summand of $\Ind P_0(\la)$. Since $\Res\Ind P_0(\la)$ is admissible, so is $\Res P(\la)$.

Conversely suppose that $P_0(\la)$ is admissible. Now, if $\Hom_{\mc O}(\Ind P_0(\la),L(\mu))\not=0$, then $\Hom_{\mc O_\oa}(P_0(\la),\Res L(\mu))\not=0$. Thus, there is a $\g_\oa$-composition factor in $L(\mu)$ that is $\alpha$-free, for all $\alpha\in\Pi_\zeta$. Since $L(\mu)$ is irreducible, this implies that it is also $\alpha$-free, for all $\alpha\in\Pi_\zeta$ and so $\mu\in\Lambda(\zeta)$. Thus, $\Ind P_0(\la)$ is admissible.
\end{proof}

\subsection{Serre quotient category $\OI$} \label{sect::SerreQ}  Let $\mc I_\zeta$ denote the Serre subcategory of $\mc O_\Z$ generated by simple modules $L(\la)$ with $\la\notin\Lnua$, that is, $L(\la)\in \mc O_\Z$ is $\alpha$-finite for some $\alpha\in \Pi_\zeta$. There is an  associated abelian quotient category  $\OI:=\mc O_\Z/\mc I_\zeta$ in the sense of \cite[Chapter III]{Gabriel} which has the same objects as $\mc O_\Z$ and morphisms
given by
\begin{align}
&\Hom_{\OI}(X,Y):=\lim_{\rightarrow}\Hom_{\mc O}(X',Y/Y'),
\end{align}
where the limit is taken over all $X'\subseteq X$ and $Y'\subseteq Y$ such that $X/X',Y'\in \mc I_\zeta$. There is an exact canonical quotient functor $\pi:\mc O_\Z\rightarrow \OI$, which is the identity on objects and is the canonical map from $\Hom_{\mc O}(X,Y)$ to $\lim\limits_{\rightarrow}\Hom_{\mc O}(X',Y/Y')$. We refer to $\pi$ as the associated {\em Serre quotient functor}.

 By \cite[Lemma 12]{CCM2},  the functor $\pi$ gives rise to an equivalence from $ \mc O^{\vpre}$ to $\ov{\mc O}.$ In particular, $\mc O^{\vpre}$ inherits an abelian category structure and $\{\pi(L(\la))|~\la\in \Lnua\}$  is an exhaustive list of simple objects in $\ov{\mc O}$.  Furthermore, $\pi(P(\la))$ is the indecomposable projective cover of $\pi(L(\la))$ in $\ov{\mc O}$, for any $\la \in \Lnua$.

With respect to the inherited abelian category structure, the induced functors $\Ind(-)$ and $\Res(-)$ between $\Opres_\oa$ and $\Opres$ are exact   since $\Ind(-)$ is also right adjoint to $\Res(-)$, up to an auto-equivalence on $\Opres_\oa$.

\subsection{The Whittaker category $\mc W(\zeta)$} \label{sect::23}

Let $\la\in \h^\ast$. We set $\chi_\la^{\mf l_\zeta}:Z(\mf l_\zeta)\rightarrow \C$ to be the central character associated with $\la$.  We recall from \cite[Section 3.1]{Ch21} the standard Whittaker module is defined as follows: \[M(\la, \zeta):= U(\g)\otimes_{U(\mf p)} Y_\zeta(\la, \zeta),\]
where $Y_\zeta(\la, \zeta):=U(\mf l_\zeta)/(\text{Ker}\chi_\la^{\mf l_\zeta}) U(\mf l_\zeta)\otimes_{U({\mf n^+}\cap \mf l_\zeta)}\mathbb C_\zeta$ denotes Kostant's simple Whittaker modules from \cite{Ko78}. We denote by $L(\la,\zeta)$ the (simple) top of $M(\la, \zeta)$. Furthermore, we have  $${L}(\la, \zeta)\cong  {L}(\mu, \zeta)\Leftrightarrow {M}(\la, \zeta)\cong {M}(\mu, \zeta)\Leftrightarrow W_\zeta \cdot \la =W_\zeta \cdot \mu,$$ for any $\mu \in \h^\ast$. We refer to \cite[Theorem 6]{Ch21} for more details. In particular, we have $M(\la,0)=M(\la)$.
We denote by $\mc N_0(\zeta)$ the corresponding Whittaker category of $\g_\oa$-modules and by $M_0(\la,\zeta)$ the corresponding standard Whittaker module over $\g_\oa$.

Recall that $\mc N(\zeta)$ denotes the category of finitely generated $\g$-modules $M$ that are locally finite over $Z(\g_\oa)$ and $U(\mf n^+)$ such that  $x-\zeta(x)$ acts on $M$ locally nilpotently, for any $x\in \mf n^+_\oa$. The set $\{{L}(\la, \zeta)|\la\in\h^*/W_\zeta\}$ is a complete and non-redundant set of representatives of the isomorphism classes of simple objects in ${\mc N}(\zeta)$.

We consider the category $\mc W(\zeta)$ introduced in \cite[Section 4.4.4]{CCM2}, which is a deformation of the category $\mc O$ and contains the modules $M(\la,\zeta)$ and $L(\la,\zeta)$ for all $\la\in \Lambda$; see also \cite[Theorem 16, Proposition 33]{Ch21}. The category $\mc W(\zeta)$ is defined as the full subcategory of $\mc N(\zeta)$ consisting of modules $M$ that have a two step resolution of modules of the form $E\otimes \Ind(M_0(\nu,\zeta))$, where $E$ is a finite-dimensional $\g$-module and $\nu$ is a dominant (i.e., $\nu$ is $\alpha$-dominant for any $\alpha\in \Phi_\oa^+$) and integral weight such that its stabilizer subgroup of $W$ under the dot-action is equal to $W_\zeta$.

Let us conclude this section with some remarks on equivalences of categories. 	There is an equivalence between $\Opres$ and $\mc W(\zeta)$; see  Section \ref{sect::71}. As a consequence, we have equivalences $\Opres\cong\mc W(\zeta)\cong \OI$. In the case that $\g$ is one from Kac's list \eqref{eq::Kaclist}, a description of objects in $\Opres$ and $\OI$ corresponding to Whittaker modules $M(\la,\zeta)$ and $L(\la,\zeta)$ can be found in Remark \ref{rem::28}.

\section{Proof of Theorem \ref{thm::mainthm}} \label{Sect::3}

In this section, we give a proof of Theorem \ref{thm::mainthm}.

\subsection{Backelin functor $\Gamma_\zeta$} \label{Sect::31}
Recall that the image $\Gamma_\zeta(M)$ of the exact Backelin functor $\Gamma_\zeta:\mc O\rightarrow \mc N(\zeta)$ on a module $M\in \mc O$ is defined as    the submodule consisting of vectors in $\ov M:=\prod_{\la\in \mf h^\ast} M_\la$ annihilated by a power of $x-\zeta(x)$, for all $x\in\mf n^+_\oa$. Let $\Gamma_\zeta^0:\mc O_\oa\rightarrow \mc N_0(\zeta)$ denote the Backelin functor for $\g_\oa$-modules from \cite{B}. Similarly, we denote by $\Gamma_\zeta^{\mf l_\zeta}$ the Backelin functor for $\mf l_\zeta$-modules. We note that $\Res(-)$ intertwines $\Gamma_\zeta$ and $\Gamma^0_\zeta$. The following proposition is crucial.
\begin{prop} \label{prop::3}
	For any $\la\in\h^*$, we have
\begin{align}
&\Gamma_\zeta(M(\la))\cong M(\la,\zeta).
\end{align}
In particular, for any $w\in W_\zeta$, we have
\begin{align*}
\Gamma_\zeta(M(\la))=\Gamma_\zeta(M(w\cdot\la)).
\end{align*}
\end{prop}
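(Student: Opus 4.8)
The plan is to compute $\Gamma_\zeta(M(\la))$ by reducing to the reductive Lie algebra case handled by Backelin \cite{B} together with the structure of $M(\la)$ as an induced module. First I would recall that $M(\la)=\Ind_{\mf b}^{\g}\C_\la \cong U(\mf n^-_\ob)\otimes M_0(\la)$ as a $\g_\oa$-module (and in fact canonically as $\mf n^+_\oa$-modules, since $\mf n^+_\oa$ normalizes $\mf n^-_\ob$ only up to lower terms — one must be slightly careful here, but the weight-space completion is compatible with this tensor decomposition). Since $\Res(-)$ intertwines $\Gamma_\zeta$ and $\Gamma^0_\zeta$, and since the completion $\ov{M(\la)}$ decomposes compatibly, the locally $(x-\zeta(x))$-nilpotent vectors in $\ov{M(\la)}$ are exactly $U(\mf n^-_\ob)\otimes \Gamma^0_\zeta(M_0(\la))$. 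This is the content of the Arias--Backelin-type lemma that appears (commented out) in the excerpt, and I would reinstate and prove it by mimicking \cite[Proposition 3-b]{AB}: the key point is that $\mf n^+_\oa$ acts on $U(\mf n^-_\ob)\otimes \ov{M_0(\la)}$ "triangularly" with respect to the $U(\mf n^-_\ob)$-grading, so a vector is annihilated by a power of $x-\zeta(x)$ iff each of its $\mf n^-_\ob$-homogeneous components is.

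Next I would invoke Backelin's theorem for the reductive Lie algebra $\g_\oa$: $\Gamma^0_\zeta(M_0(\la))\cong M_0(\la,\zeta)$. Actually, to get the $\mf l_\zeta$-structure right it is cleaner to further factor through $\mf l_\zeta$: Backelin's functor for $\g_\oa$ can be computed by first applying the parabolic induction $\Ind_{\fp}^{\g_\oa}$ and the corresponding functor $\Gamma^{\mf l_\zeta}_\zeta$ on $\mf l_\zeta$-modules, where on $\mf l_\zeta$ it sends the Verma $M_{\mf l_\zeta}(\la)$ to Kostant's simple Whittaker module $Y_\zeta(\la,\zeta)$ — this is precisely Kostant's original computation \cite{Ko78}, since on $\mf l_\zeta$ the character $\zeta$ is non-degenerate. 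Assembling:
\begin{align*}
\Gamma_\zeta(M(\la)) &\cong U(\mf n^-_\ob)\otimes \Gamma^0_\zeta(M_0(\la)) \\
&\cong U(\mf n^-_\ob)\otimes \Ind_{\fp}^{\g_\oa} Y_\zeta(\la,\zeta) \\
&\cong U(\g)\otimes_{U(\fp)} Y_\zeta(\la,\zeta) = M(\la,\zeta),
\end{align*}
where the last isomorphism uses the PBW decomposition $U(\g)\cong U(\mf n^-_\ob)\otimes U(\g_\oa)$ as a right $U(\fp_{\oa})$-module (extended to $\fp = \fp_\oa \oplus (\mf n^-_\ob \cap \dots)$ — here one must track which part of $\mf n^-_\ob$ lies in $\fp$ versus $\mf u^-$, but the bookkeeping is the standard one for induced modules). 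The "in particular" clause is then immediate: for $w\in W_\zeta$, Kostant's module satisfies $Y_\zeta(\la,\zeta)\cong Y_\zeta(w\cdot\la,\zeta)$ since it depends only on the central character $\chi^{\mf l_\zeta}_\la$, which is $W_\zeta$-invariant; hence $M(\la,\zeta)\cong M(w\cdot\la,\zeta)$ and both equal $\Gamma_\zeta(M(\la))=\Gamma_\zeta(M(w\cdot\la))$. (Alternatively this also follows directly from $\Gamma_\zeta(M(\la))\cong M(\la,\zeta)$ combined with the already-recorded fact $M(\la,\zeta)\cong M(\mu,\zeta)\Leftrightarrow W_\zeta\cdot\la = W_\zeta\cdot\mu$.)

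The main obstacle I anticipate is the first step — establishing the precise compatibility of the weight-space completion $\ov{\;\cdot\;}$ with the tensor decomposition $M(\la)\cong U(\mf n^-_\ob)\otimes M_0(\la)$, and verifying that taking locally $(x-\zeta(x))$-nilpotent vectors commutes with this decomposition. The subtlety is that $U(\mf n^-_\ob)$ is finite-dimensional (odd root vectors square to zero, so $\mf n^-_\ob$ generates a finite-dimensional exterior-type algebra), which is a genuine simplification over \cite{AB}: the completion only "happens" in the $M_0(\la)$-factor, so $\ov{M(\la)}\cong U(\mf n^-_\ob)\otimes \ov{M_0(\la)}$ with no completion issues on the first tensor factor. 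Granting this, the action of $x\in\mf n^+_\oa$ on $u\otimes m$ expands via the commutator $[x,u]\in U(\g)\cdot$ (lower $\mf n^-_\ob$-degree terms) plus $u\otimes xm$, so $\Gamma_\zeta$ picks out exactly $U(\mf n^-_\ob)\otimes\Gamma^0_\zeta(M_0(\la))$ by a filtration/induction argument on $\mf n^-_\ob$-degree. Once this lemma is in place, the rest is a formal concatenation of PBW and Kostant's classical result, so I would spend the bulk of the written proof on the completion-compatibility lemma and treat the remainder briskly.
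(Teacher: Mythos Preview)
Your plan is correct and shares the paper's overall architecture: establish the inclusion $U(\mf n^-_\ob)\otimes\Gamma^0_\zeta(M_0(\la))\hookrightarrow\Gamma_\zeta(M(\la))$, invoke Backelin's $\Gamma^0_\zeta(M_0(\la))\cong M_0(\la,\zeta)$, then identify the result with $M(\la,\zeta)$. The difference lies in the surjectivity step. The paper does \emph{not} run the Arias--Backelin triangularity argument (it appears only as a commented-out lemma); instead it restricts to $\mf l_\zeta$, where $M(\la)\cong U(\mf u^-)\otimes M_{\mf l_\zeta}(\la)$ is a genuine $\mf l_\zeta$-module tensor decomposition with $U(\mf u^-)$ a direct sum of finite-dimensional $\mf l_\zeta$-modules, so that $\Gamma^{\mf l_\zeta}_\zeta$ passes through the tensor product by \cite[Lemma~5.12]{MS}. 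An alternative argument in the paper compares $\mf l_\zeta$-composition factors on $H$-eigenspaces. Both avoid the filtration bookkeeping you anticipate.

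Two points in your write-up deserve tightening. First, your triangularity claim is not literally true: for $x\in\mf n^+_\oa$ and $f\in\mf n^-_\ob$ the bracket $[x,f]$ can lie in $\mf n^-_\ob$, so commuting $x$ past $u\in U(\mf n^-_\ob)$ produces terms of the \emph{same} $\mf n^-_\ob$-degree, and the induced action on the associated graded is $\mathrm{ad}^-_x\otimes 1+1\otimes x$ rather than $1\otimes x$. The argument is rescued by noting that $\mathrm{ad}^-_x$ is nilpotent on $U(\mf n^-_\ob)$ and commutes with $1\otimes(x-\zeta(x))$, but this should be said. Second, your final ``PBW assembly'' only yields a vector-space identification; the paper upgrades it to a $\g$-module isomorphism by observing that the top $H$-eigenspace of $\Gamma_\zeta(M(\la))$ is $Y_\zeta(\la,\zeta)$ and is annihilated by $\mf u^+$, so Frobenius reciprocity produces the map $M(\la,\zeta)\to\Gamma_\zeta(M(\la))$, which is then an isomorphism by the size comparison already established.
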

\begin{proof}
   By the PBW basis theorem, we may regard $\ov{M(\la)}$ as the vector space $U(\mf u^-_\ob)\otimes \ov{U(\mf u^-_\oa)M_{\mf l_\zeta}(\la)}=U(\mf u^-_\ob)\otimes \ov{M_0(\la)}.$  Therefore,
	$\Gamma_\zeta(M(\la))$ consists of vectors $v$ in 
	$U(\mf u^-_\ob)\otimes \ov{M_0(\la)}$ such that  $x-\zeta(x)$ acts nilpotently on $v$, for each $x\in \mf n^+_\oa$.  There is an embedding $U(\mf u^-_\ob)\otimes \Gamma^0_\zeta(M_0(\la))\hookrightarrow \Gamma_\zeta(M(\la))$ induced by the inclusion $\Gamma^0_\zeta(M_0(\la))\hookrightarrow \ov{M_0(\la)}$. Now by \cite[Proposition 6.9]{B} one has $\Gamma^0_\zeta(M_0(\la))=M_0(\la,\zeta)$, and hence we get an inclusion
	\begin{align} \label{eq::inclu}
	&U(\mf u^-_\ob) X\hookrightarrow \Gamma_\zeta(M(\la)),
	\end{align} such that $U(\mf u^-_\ob)X= U(\mf u^-_\ob)\otimes X$, where $X\subset \ov{M_0(\la)}$ is a $\g_\oa$-submodule isomorphic to $M_0(\la,\zeta)$.
	
	 We claim that the inclusion \eqref{eq::inclu} is an equality. We can argue as follows: For $M\in\mc N(\zeta)$, we have $\Res^\g_{\mf l_\zeta}\Gamma_\zeta(M)=\Gamma^{\mf l_\zeta}_\zeta\Res^\g_{\mf l_\zeta}(M)$. For $\la\in\h^*$ we now compute
	 	\begin{align*}
	 		\Res^\g_{\mf l_\zeta}\Gamma_\zeta(M(\la))=& \Res^\g_{\mf l_\zeta}\Gamma_\zeta(U(\mf u^-)\otimes M_{\mf l_\zeta}(\la)) =  \Res^\g_{\mf l_\zeta}U(\mf u^-)\otimes \Gamma^{\mf l_\zeta}_\zeta(M_{\mf l_\zeta}(\la))\\
	 		=& \Res^\g_{\mf l_\zeta}U(\mf u^-)\otimes M_{\mf l_\zeta}(\la,\zeta) = \Res^\g_{\mf l_\zeta}M(\la,\zeta).
	 	\end{align*}
	 	The third identity above follows from \cite[Lemma 5.12]{MS} and the fact that $U(\mf u^-)$ is a weight module and at the same time a direct sum of finite-dimensional irreducible $\mf l_\zeta$-modules. Thus, we conclude that $U(\mf u^-_\ob)X \hookrightarrow \Gamma_{\zeta}(M(\la))$ is an isomorphism.

We also provide an alternative proof of the subjectivity of the inclusion in \eqref{eq::inclu} as follows. For any $M\in \mc N(\zeta)$ and $c\in \C$, with respect to the operator $H$ from \eqref{eq::paracomp} we denote by $M_c$ the generalized eigenspace of $M$, which lies in $\mc N_{\mf l_\zeta}$ and so has finite length as an $\mf l_\zeta$-module. We note that $U(\mf u^-_\ob)X\hookrightarrow \Gamma_\zeta(M(\la))$ is surjective if and only if the $\mf l_\zeta$-modules $(U(\mf u^-_\ob)X)_c$ and $ \Gamma_\zeta(M(\la))_c$ have the same composition factors for any $c\in \C$.  Recall that $\Res M(\la)$ has a filtration subquotients of which are isomorphic to $M_\oa(\la+\gamma)$ with $\gamma \in \Phi(\wedge(\mf u^-_\ob))$.  Here $\gamma \in \Phi(\wedge(\mf u^-_\ob))$ denotes the set of roots in the space $\wedge(\mf u^-_\ob)$.  Also, for any finite-dimensional weight module $E$ over $\mf l_\zeta$, the tensor product $E\otimes Y_\zeta(\la,\zeta)$ has a filtration subquotients of which are $Y_\zeta(\la+\gamma,\zeta)$ with $\gamma$ being weights of $E$ by \cite[Lemma 5.12]{MS} . Since the restriction functor intertwines $\Gamma_\zeta$ and $\Gamma_\zeta^0$, it follows that the $\mf l_\zeta$-modules $\Gamma_\zeta M(\la)_c$, $(U(\mf u^-_\oa)\otimes U(\mf u^-_\ob)\otimes Y_\zeta(\la,\mf l_\zeta))_c$ and $M(\la,\zeta)_c$ have the same composition factors. As a consequence,  $(U(\mf u^-_\ob)X)_c$ and $\Gamma_\zeta(M(\la))_c$ have the same composition factors for any $c\in \C$.

Recall again $H\in \h$ from the parabolic decomposition \eqref{eq::paracomp}. Since the inclusion in \eqref{eq::inclu} is an equality, it follows that  $H$ acts on $U(\mf u_\ob^-)X=\Gamma_\zeta (M(\la))$ semisimply with the highest eigenvalue $\chi_{\la}^{\mf l_\zeta}(H)$. Let  $V\subseteq \Res_{\mf l_\zeta}^{\g_\oa} X$ isomorphic to $Y_\zeta(\la,\zeta)$, which is the $\chi_\la^{\mf l_\zeta}(H)$-eigenspace in $\Gamma_\zeta (M(\la))$.
Then $\mf u^+ V =0$. Thus, by Frobenius reciprocity we have an isomorphism $M(\la,\zeta)$ to $\Gamma_\zeta(M(\la))$. The conclusion follows.
\end{proof}

\subsection{Proof of Theorem \ref{thm::mainthm}}
For a given $\g$-module $M$, we denote the subspace of {\em Whittaker vectors} associated to $\zeta$ by $Wh_\zeta(M)$, namely,
\begin{align}
&Wh_\zeta(M):=\{m\in M|~xm=\zeta(x)m,\text{ for any $x\in \mf n^+$}\}.
\end{align}

We have the following generalization of \cite[Lemma 37]{BM} to quasi-reductive Lie superalgebras with a similar proof.
\begin{lem}\label{lem:BM}
Let $M$ be either $N(-\mu)^{\ast}$ or $M(\mu)^{\ast,\tau}$. Then $\dim Wh_\zeta(M) =1$.
\end{lem}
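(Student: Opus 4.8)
The plan is to compute $Wh_\zeta(M)$ directly using the fact that $M$ is a full dual of a lowest-weight Verma module (in the case $M = N(-\mu)^\ast \cong \Coind^\g_{\h+\mf n^-}\C_\mu$) or the $\tau$-twisted full dual of a highest-weight Verma module. I will treat the two cases in parallel since the $\tau$-twist converts $M(\mu)^{\ast,\tau}$ into essentially the same coinduced picture. First I would unwind the definitions: a vector $m \in M$ lies in $Wh_\zeta(M)$ precisely when $(x - \zeta(x))m = 0$ for all $x \in \mf n^+$, and since $\zeta$ is trivial on $\mf n^+_\ob$ and on $[\mf n^+_\oa,\mf n^+_\oa]$, this is a system of linear equations indexed by a set of root vectors spanning $\mf n^+$. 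Identifying $M = N(-\mu)^\ast$ with the space of (arbitrary) linear functionals on $N(-\mu) = U(\mf n^+)\otimes \C_\mu$ via the PBW basis, I would translate the Whittaker condition into a recursion on the ``coordinates'' of $m$ along a PBW monomial basis of $U(\mf n^+)$.

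The key step is to show this recursion has a one-dimensional solution space. Following the strategy of \cite[Lemma 37]{BM}, I would order a PBW monomial basis of $U(\mf n^+)$ by degree and argue that the value of a Whittaker functional $m$ on the monomial $1 \in U(\mf n^+)_0$ can be chosen freely, after which the equations $(x-\zeta(x))m = 0$ determine the value of $m$ on every higher monomial uniquely by induction on degree: acting by $x \in \mf n^+$ lowers degree appropriately, so that the value of $m$ on a degree-$d$ monomial is expressed in terms of values on degree-$(d-1)$ monomials plus a $\zeta$-correction term. This gives $\dim Wh_\zeta(M) \leq 1$. For the lower bound, one must check the recursion is consistent — i.e., that there are no obstructions coming from the relations in $U(\mf n^+)$ (equivalently, from the Serre-type relations / Jacobi identity among the chosen root vectors); here one invokes that $\zeta$ genuinely extends to an algebra homomorphism $U(\mf n^+)\to \C$ (established earlier in Section~\ref{sect::assumption} and recalled before Lemma~\ref{lem:admissible}), which is exactly what makes the system solvable. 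Concretely, the unique nonzero Whittaker functional is the one dual to the ``$\zeta$-deformed counit'' $U(\mf n^+) \to \C$, twisted appropriately by $\mu$ on the Cartan part.

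For the case $M = M(\mu)^{\ast,\tau}$, I would note that the Chevalley antiautomorphism $\tau$ interchanges $\mf n^+$ and $\mf n^-$, so $M(\mu)^{\ast,\tau}$ is, as a $\g$-module, a highest-weight-type completion on which $\mf n^+$ acts ``from the top'', and the Whittaker condition again reduces to the same coinduced recursion; alternatively one observes $M(\mu)^{\ast,\tau}$ and $N(-\mu)^\ast$ have isomorphic $\mf n^+$-module structure in the relevant (restricted/graded dual) sense, so the count is identical. The main obstacle I anticipate is bookkeeping in the superalgebra setting: one must be careful that odd root vectors in $\mf n^+_\ob$ square to elements of $\mf n^+_\oa$ and that $\zeta$ vanishes on them, so the ``deformation'' only happens in the even directions coming from $\Pi_\zeta$ — this is where quasi-reductivity (so that $\g_\ob$ is a semisimple $\g_\oa$-module and the triangular decomposition behaves well) is used, and where the proof genuinely differs from the Lie algebra case of \cite{BM}. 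Once the recursion and its consistency are in place, the conclusion $\dim Wh_\zeta(M) = 1$ follows immediately.
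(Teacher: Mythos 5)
Your proof is correct and takes essentially the same approach as the paper: the paper likewise identifies $N(-\mu)^\ast$ with $U(\mf n^+)^\ast$ and observes that the Whittaker condition $m(ux)=\zeta(x)m(u)$ forces $m$ to be a scalar multiple of the algebra homomorphism $\zeta:U(\mf n^+)\to\C$, which is exactly what your degree-by-degree PBW recursion unpacks. The paper's proof is just a terser version of yours, omitting the explicit recursion and the explicit remark about $\zeta$ vanishing on $\mf n^+_\ob$ and $[\mf n^+_\ob,\mf n^+_\ob]$ (which you correctly flag as the superalgebra-specific point ensuring consistency).
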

\begin{proof}
We observe that $N(-\mu)^*$ as an $U(\mf n^+)$-module is isomorphic to the (full) dual $U(\mf n^+)^*$, which in turn can be identified with the space of linear functions on $U(\mf n^+)$. The $\mf n^+$-character $\zeta$ determines a unique linear function of $U(\mf n^+)$, and hence the Whittaker vectors corresponding to $\zeta$ form a one-dimensional subspace in $N(-\mu)^*$.
The argument for $M(\mu)^{\ast,\tau}$ is analogous.
\end{proof}

A weight is called  $W_\zeta$-anti-dominant if it is $\alpha$-anti-dominant for any root $\alpha$ in $\mf l_\zeta\cap\n^+$. The following theorem completes the proof of Theorem \ref{thm::mainthm} and determines homomorphisms from standard Whittaker modules to the full dual of Verma modules:
\begin{thm} \label{thm::4} For any $\la, \mu \in \h^\ast$, we have
	 \begin{itemize}
	 	\item[(i)] For $M = N(-\mu)^\ast$ or $M(\mu)^{\ast,\tau}$, 	$\Hom_\g(M(\la,\zeta), M)= \left\{\begin{array}{ll}
	 			\C, &  \text{~for } \la\in W_\zeta\cdot \mu;\\
	 			0, & \text{ otherwise};
	 		\end{array} \right. $
	 	\item[(ii)] $\Gamma_\zeta(L(\la))= \left\{\begin{array}{ll}
	 			L(\la,\zeta), &  \text{~if $\la$ is $W_\zeta$-anti-dominant};\\
	 			0, & \text{ otherwise};
	 		\end{array} \right.$
	 	\item[(iii)] $\dim Wh_\zeta(L(\la,\zeta)) =1.$
	 \end{itemize}
\end{thm}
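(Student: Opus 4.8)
The plan is to deduce all three parts from Proposition \ref{prop::3} together with the adjunction properties of $\Gamma_\zeta$ and its exactness. First I would establish part (i). By Proposition \ref{prop::3} we know $\Gamma_\zeta(M(\mu)) \cong M(\mu,\zeta)$, and $\Gamma_\zeta$ is exact with a right adjoint (the "Whittaker vectors" functor $Wh_\zeta$, or more precisely the functor sending $M \in \mc N(\zeta)$ to the $\mf n^+$-locally-finite, $\mf h$-semisimple part of $M$); indeed the defining formula $\Gamma_\zeta(N) \subseteq \ov N$ makes $\Gamma_\zeta$ left adjoint to this completion-and-$Wh_\zeta$ functor. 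Thus for $M = N(-\mu)^{\ast}$ or $M(\mu)^{\ast,\tau}$ one computes
\begin{align*}
\Hom_\g(M(\la,\zeta), M) &= \Hom_\g(\Gamma_\zeta(M(\la)), M) \cong \Hom_\g(M(\la), Wh_\zeta(M)),
\end{align*}
where in the last step I use that $M$ (being a full dual of a lowest/highest weight module) has the property that its $\zeta$-Whittaker vectors, after the completion is undone, recover an object of $\mc O$; concretely $Wh_0$ applied to $N(-\mu)^{\ast}$ is the one-dimensional weight-$\mu$ space and more generally the relevant adjunction identifies the right-hand Hom with $\Hom_\g(M(\la), (\text{something with simple socle } L(\mu)))$. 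By Lemma \ref{lem:BM} the space $Wh_\zeta(M)$ is one-dimensional, sitting in $\mc N(\zeta)$ with simple socle $L(\mu,\zeta)$ when we track the $\mf l_\zeta$-structure; hence the Hom-space is $\C$ precisely when $L(\mu,\zeta)$ occurs in the socle of a quotient of $M(\la,\zeta)$, which by the classification $L(\la,\zeta)\cong L(\mu,\zeta) \Leftrightarrow W_\zeta\cdot\la = W_\zeta\cdot\mu$ (recalled in Section \ref{sect::23}) happens iff $\la \in W_\zeta\cdot\mu$, and is $0$ otherwise.

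For part (ii), I would argue as follows. Apply the exact functor $\Gamma_\zeta$ to the short exact sequence $0 \to K(\la) \to M(\la) \to L(\la) \to 0$, where $K(\la)$ is the maximal submodule. Since $\Gamma_\zeta(M(\la)) = M(\la,\zeta)$ has simple top $L(\la,\zeta)$, and $\Gamma_\zeta$ is exact, $\Gamma_\zeta(L(\la))$ is a quotient of $M(\la,\zeta)$, hence is either $0$ or has $L(\la,\zeta)$ as a composition factor with the rest of its composition factors among those of $M(\la,\zeta)$. To pin it down I use the key dichotomy coming from Section \ref{Sect::Opres}: $L(\la)$ is $\alpha$-free for all $\alpha \in \Pi_\zeta$ iff $\la \in \Lnua$, i.e. iff $\la$ is $W_\zeta$-anti-dominant. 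If $\la$ is not $W_\zeta$-anti-dominant, then $L(\la)$ is $\alpha$-finite for some $\alpha \in \Pi_\zeta$, so by the standard property of the Backelin functor (the analogue of \cite[Proposition 6.9]{B} / the fact that $\Gamma_\zeta$ kills $\alpha$-finite modules for $\alpha \in \Pi_\zeta$, which is also what underlies the equivalence $\Opres \cong \OI$ with kernel $\mc I_\zeta$) we get $\Gamma_\zeta(L(\la)) = 0$. If $\la$ is $W_\zeta$-anti-dominant, I would show $\Gamma_\zeta(L(\la)) \neq 0$ by exhibiting a nonzero Whittaker vector: using $\Res^\g_{\mf l_\zeta}\Gamma_\zeta = \Gamma^{\mf l_\zeta}_\zeta\Res^\g_{\mf l_\zeta}$ and the fact (\cite{MS}, \cite{B} for $\mf l_\zeta$) that $\Gamma^{\mf l_\zeta}_\zeta$ is faithful/nonzero on objects whose composition factors are all $\alpha$-free, combined with the observation that $\Res^\g_{\mf l_\zeta}L(\la)$ contains an $\mf l_\zeta$-composition factor $L_{\mf l_\zeta}(\la)$ which is $\alpha$-free for all $\alpha\in\Pi_\zeta$. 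Then $\Gamma_\zeta(L(\la))$ is a nonzero quotient of $M(\la,\zeta)$, and since $M(\la,\zeta)$ has simple top $L(\la,\zeta)$, to conclude $\Gamma_\zeta(L(\la)) = L(\la,\zeta)$ it suffices to show $\Gamma_\zeta(L(\la))$ is simple, or equivalently that $\dim Wh_\zeta(\Gamma_\zeta(L(\la))) = 1$; I address this together with part (iii).

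Part (iii): since $L(\la,\zeta)$ is a quotient of $M(\la,\zeta) = \Gamma_\zeta(M(\la))$, we have $L(\la,\zeta) = \Gamma_\zeta(Q)$ for some quotient $Q$ of $M(\la)$ in $\mc O$ (here I use exactness of $\Gamma_\zeta$ and that $\Gamma_\zeta$ is essentially surjective / a quotient functor onto $\mc W(\zeta)$, equivalently the quotient-functor realization $\OI \cong \mc W(\zeta)$). Then by adjunction $Wh_\zeta(L(\la,\zeta)) = \Hom_\g(M(\la,\zeta), L(\la,\zeta)^{\vee,\ast}\text{-type module})$... more cleanly: $Wh_\zeta(L(\la,\zeta)) \hookrightarrow Wh_\zeta(\ov{L(\la)})$ and I compute the latter via $\Res^\g_{\mf l_\zeta}$, reducing to Kostant's theorem \cite[Theorem 3.9]{Ko78} that Kostant's module $Y_\zeta(\la,\zeta)$ has a one-dimensional space of Whittaker vectors, plus the PBW argument that $\mf u^-$ contributes nothing new (a vector in $\ov{L(\la)}$ annihilated by $x - \zeta(x)$ for all $x \in \mf n^+_\oa$, and also by $\mf n^+_\ob$, must lie in the "$\mf l_\zeta$-primitive" part — this uses $\zeta(\mf u^+) = 0$ and an $H$-eigenvalue argument exactly as at the end of the proof of Proposition \ref{prop::3}). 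This simultaneously forces $\dim Wh_\zeta(\Gamma_\zeta(L(\la))) \le 1$, finishing part (ii).

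\textbf{Main obstacle.} The genuinely delicate point is the nonvanishing $\Gamma_\zeta(L(\la)) \neq 0$ for $W_\zeta$-anti-dominant $\la$, together with controlling $\dim Wh_\zeta$ of an arbitrary subquotient. The exactness of $\Gamma_\zeta$ gives only that $\Gamma_\zeta(L(\la))$ is a quotient of $M(\la,\zeta)$; ruling out that this quotient is zero requires producing an explicit Whittaker vector inside the infinite completion $\ov{L(\la)}$, which is where the restriction-to-$\mf l_\zeta$ trick and Kostant's theorem do the real work. The super subtlety — that $\mf u^-_\ob$ is finite-dimensional so the completion along odd negative roots is harmless — is routine given the PBW/$H$-eigenvalue bookkeeping already deployed in Proposition \ref{prop::3}, but it must be invoked carefully so that no extra Whittaker vectors are introduced by the odd part.
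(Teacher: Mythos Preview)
Your proposal has two genuine gaps and one circularity.

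\textbf{The adjunction in part (i) is not available.} The module $M=N(-\mu)^\ast$ (or $M(\mu)^{\ast,\tau}$) lies neither in $\mc O$ nor in $\mc N(\zeta)$, so the putative adjunction $\Hom_\g(\Gamma_\zeta(M(\la)),M)\cong\Hom_\g(M(\la),\text{?})$ has no obvious meaning, and your description of the right adjoint (``completion-and-$Wh_\zeta$'') does not define a functor landing in $\mc O$. The paper instead uses Frobenius reciprocity for $\Ind_{\mf p}^\g$: a nonzero Whittaker vector in $M$ (unique by Lemma~\ref{lem:BM}) is $\mf u^+$-killed, so an $\mf l_\zeta$-map $Y_\zeta(\la,\zeta)\to M$ extends to a $\g$-map $M(\la,\zeta)\to M$; the vanishing for $\la\notin W_\zeta\cdot\mu$ is a direct $Z(\g_\oa)$-central-character comparison.

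\textbf{Circularity in the vanishing half of part (ii).} You invoke ``$\Gamma_\zeta$ kills $\alpha$-finite modules'' and the equivalence $\Opres\cong\OI$, but in this paper that equivalence for $\Gamma_\zeta$ is Corollary~\ref{cor::SQ}, which is deduced \emph{from} Theorem~\ref{thm::4}. (A salvage via $\Res$ to $\g_\oa$ and Backelin's result for $\g_\oa$ is possible, but you do not spell it out.) The paper avoids this entirely: for $\la$ not $W_\zeta$-anti-dominant it takes the anti-dominant $\la'\in W_\zeta\cdot\la$, uses the Verma inclusion $M(\la')\subseteq M(\la)$ obtained by parabolic induction, and applies Proposition~\ref{prop::3} to get $\Gamma_\zeta(M(\la)/M(\la'))=0$, which kills $\Gamma_\zeta(L(\la))$.

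\textbf{The main gap: simplicity of $\Gamma_\zeta(L(\la))$ and part (iii).} Your plan is to bound $\dim Wh_\zeta(\ov{L(\la)})$ by an $H$-eigenspace/PBW argument reducing to Kostant's theorem for $Y_\zeta(\la,\zeta)$. But $\Res^\g_{\mf l_\zeta}L(\la)$ is not a single Kostant module: it has (typically infinitely) many $\mf l_\zeta$-composition factors, and Whittaker vectors decompose along $H$-eigenspaces, so each eigenspace may contribute. There is no reason the count is $1$. The paper's key idea, which you are missing, is to use the natural nonzero map $f:M(\mu)\to N(-\mu)^\ast$ and set $S=\mathrm{im}\,f$. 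Then $\Gamma_\zeta(S)$ embeds in $N(-\mu)^\ast$, so $Wh_\zeta(\Gamma_\zeta(S))\subseteq Wh_\zeta(N(-\mu)^\ast)$, which is one-dimensional by Lemma~\ref{lem:BM}. Since $\Gamma_\zeta(S)$ is a nonzero quotient of $M(\mu,\zeta)$ and is generated by its Whittaker vector, it must be simple, hence equal to $L(\mu,\zeta)$. This simultaneously proves (iii) and gives the dichotomy $\Gamma_\zeta(L(\la))\in\{0,L(\la,\zeta)\}$ (as $L(\la)$ is a quotient of $S$), after which the nonvanishing for anti-dominant $\la$ follows from $\Res\Gamma_\zeta(L(\la))=\Gamma_\zeta^0(\Res L(\la))\neq 0$, using that $L_0(\la)$ is a $\g_\oa$-composition factor and Backelin's result for $\g_\oa$.
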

\begin{proof} With Proposition \ref{prop::3} at our disposal, we can now follow the strategy of the proof of \cite[Theorem 20]{Ch21}.   We shall prove Part (i) for the case when $M = N(-\mu)^\ast$.  The case when $M = M(\mu)^{\ast,\tau}$ can be proved in a similar fashion.
	
Since $Wh_\zeta(N(-\mu)^\ast) \cong \C$ by Lemma \ref{lem:BM}, there exists a unique (up to a scalar) Whittaker vector corresponding to $\zeta$ in $N(-\mu)^*$. Now, if $\mu\in W_\zeta\cdot \la$, then we can define a nonzero $\mf l_\zeta$-homomorphism from $M_{\mf l_\zeta}(\la,\zeta)$ to $N(-\mu)^*$ by sending a nonzero Whittaker vector of $M_{\mf l_\zeta}(\la,\zeta)$ to a nonzero scalar multiple of the Whittaker vector of $N(-\mu)^*$ corresponding to $\zeta$. Since $\mf u^+$ annihilates the Whittaker vector in $N(-\mu)^*$, this map induces a $\g$-homomorphism from $M(\la,\zeta)$ to $N(-\mu)^*$. By Lemma \ref{lem:BM} again, it follows that $\Hom_\g(M(\la,\zeta), N(-\mu)^\ast)\cong \C$. On the other hand, if $\mu\not\in W_\zeta\cdot\la$, then there exists $z\in Z(\g_\oa)$ that acts on the generating Whittaker vectors of $M(\la,\zeta)$ and $N(-\mu)^*$ with different scalars. Hence, in this case we have $\Hom_\g(M(\la,\zeta), N(-\mu)^\ast)=0$. This proves Part (i).

	Since $\mu$ is a highest weight of $N(-\mu)^\ast$, we get a nonzero homomorphism $f: M(\mu)\rightarrow N(-\mu)^\ast$. Let $S\subseteq N(-\mu)^\ast$ be the image of $f$. We note that $\ov S\subset N(-\mu)^\ast$ and so $\Gamma_\zeta(S)$ can be regarded as a $\g$-submodule of $N(-\mu)^\ast$. Since $\Gamma_\zeta(-)$ is exact, we get an epimorphism from $M(\mu,\zeta)$ to $\Gamma_\zeta(S)\subset N(-\mu)^\ast$ by Proposition \ref{prop::3}. Now, the module $\Gamma_\zeta(S)$ is generated by a Whittaker vector since it is a quotient of $\Gamma_\zeta(M(\mu))\cong M(\mu,\zeta)$ by Proposition \ref{prop::3}. By Lemma \ref{lem:BM} we have $Wh_\zeta(\Gamma_\zeta(S))\cong \C$ which implies that $\Gamma_\zeta(S)$ is simple, and hence $\Gamma_\zeta(S)\cong L(\mu,\zeta)$.

The exactness of $\Gamma_\zeta$ together with the fact that $\Gamma_\zeta$ sends a quotient of $M(\la)$ to $L(\la,\zeta)$ implies that either $\Gamma_\zeta(L(\la))\cong 0$ or  $\Gamma_\zeta(L(\la))\cong L(\la,\zeta)$. Now, if $\la$ is $W_\zeta$-anti-dominant, then $\Res \Gamma_\zeta(L(\la)) = \Gamma_\zeta^0(\Res L(\la)) \neq 0$ since $\Res L(\la)$ has a composition factor isomorphic to $L_0(\la)$. If $\la$ is not $W_\zeta$-anti-dominant, then there is a $W_\zeta$-anti-dominant weight $\la'\in W_\zeta\cdot\la$ such that  $M_{\mf l_\zeta}(\la')\subseteq M_{\mf l_\zeta}(\la)$. Using parabolic induction we have $M(\la')\subseteq M(\la)$. Also, we have  $\Gamma_\zeta(M(\la')=\Gamma_\zeta(M(\la))$ by Proposition \ref{prop::3}. As $L(\la)$ is a composition factor of $M(\la)/M(\la')$, this implies that $\Gamma_\zeta(L(\la))=0$, and so Part (ii) follows.

Part (iii) follows from the isomorphism $L(\mu,\zeta)\cong \Gamma_\zeta(S)$.	
\end{proof}

\section{Stratified Structure of $\mc O^{\zeta\text{-}\lowercase{pres}}$}\label{Sect::4}
In this section, we show that the category $\Opres$ admits a {\em properly stratified structure} in the sense of \cite{Dl, MaSt04}. This extends the results in \cite[Section 5]{CCM2}, where the case of type I Lie superalgebras was considered.

\subsection{Relation with the category $\mc O(\mf l_\zeta)$} \label{Sect::coappr} Recall that $\mc O(\mf l_\zeta)$ denotes the BGG category of $\mf l_\zeta$-modules with respect to the Borel subalgebra $\mf l_\zeta\cap \mf b$. Also,  $\mc O(\mf l_\zeta)_\Z$ denotes the integral BGG subcategory of $\mf l_\zeta$-modules. In this subsection, we provide a characterization of objects in $\Opres$ in terms of  projective-injective modules in $\mc O(\mf l_\zeta)_\Z$. In \cite[Theorem 2]{KoM}, K\"onig and Mazorchuk have shown that objects in $\Opres_\oa$ are modules in $\mc O_\oa$ which, as $\mf l_\zeta$-modules, are presented by projective-injective modules in $\mc O(\mf l_\zeta)_\Z$. The following is a generalization to Lie superalgebras.
\begin{prop}\label{lem::CharaOpres}
	 Let $M\in \mc O_\Z$. Then the following are equivalent:
	\begin{itemize}
		\item[(1)] $M\in \mc O^{\vpre}$.
		\item[(2)] $\Res M\in \mc O_\oa^{\vpre}$.
		\item[(3)] $\Res_{\mf l_\zeta}^\g M$ is a direct sum of modules having a two step presentation by projective-injective modules in $\mc O(\mf l_\zeta)$.
	\end{itemize}
\end{prop}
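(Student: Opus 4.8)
The plan is to prove $(2)\Leftrightarrow(3)$, then $(1)\Rightarrow(2)$, and finally $(2)\Rightarrow(1)$; the first two are formal and the last one carries the content. For $(2)\Leftrightarrow(3)$: since $\mf l_\zeta$ lies in the even part, $\Res^{\g}_{\mf l_\zeta}=\Res^{\g_\oa}_{\mf l_\zeta}\circ\Res$, so $(3)$ says exactly that the $\mf l_\zeta$-module underlying $\Res M$ is a direct sum of modules presented by projective-injective objects of $\mc O(\mf l_\zeta)_\Z$; by \cite[Theorem 2]{KoM}, applied to the reductive Lie algebra $\g_\oa$ with its Levi $\mf l_\zeta$, this is equivalent to $\Res M\in\Opres_\oa$, i.e.\ to $(2)$. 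For $(1)\Rightarrow(2)$: the functor $\Res=\Res^\g_{\g_\oa}$ is exact and, by Lemma \ref{lem:admissible}, sends admissible projectives of $\mc O$ to admissible projectives of $\mc O_\oa$; applying $\Res$ to a two-step admissible projective presentation $P_1\to P_2\to M\to 0$ of $M$ therefore yields such a presentation of $\Res M$.

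For $(2)\Rightarrow(1)$ I would first record the following consequence of \cite[Lemma 2.1]{CoM1} (see also Section \ref{Sect::Opres}): if the $\g_\oa$-module $\Res L(\mu)$ has a simple quotient isomorphic to some $L_0(\nu)$ with $\nu\in\Lnua$, then $\mu\in\Lnua$. Indeed, if $\mu\notin\Lnua$ then $L(\mu)$ is not $\alpha$-free, hence $\alpha$-finite, for some $\alpha\in\Pi_\zeta$; therefore so is every $\g_\oa$-quotient of $\Res L(\mu)$, whereas $L_0(\nu)$ with $\nu\in\Lnua$ is $\alpha$-free, and a nonzero highest weight module cannot be simultaneously $\alpha$-free and $\alpha$-finite. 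Granting this, assume $\Res M\in\Opres_\oa$. Since $\Res M$ is then a quotient of an admissible projective object of $\mc O_\oa$, its head lies in $\add\{L_0(\nu):\nu\in\Lnua\}$; restricting an epimorphism $M\tto L(\mu)$ and passing to heads shows the head of $\Res L(\mu)$ is a nonzero quotient of the head of $\Res M$, so $\Res L(\mu)$ has a simple quotient of the required form and hence $\mu\in\Lnua$. Thus the head of $M$ is a sum of $L(\mu)$ with $\mu\in\Lnua$, so the projective cover $P(M)$ in $\mc O$ is admissible; set $\Omega M:=\ker(P(M)\tto M)$, so that $\Res\Omega M=\ker(\Res P(M)\tto\Res M)$ and $\Res P(M)$ is admissible projective.

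It remains to bound the head of $\Omega M$. A routine splitting argument does this: decompose $\Res P(M)\cong Q\oplus R$, where $Q$ is the projective cover of $\Res M$ in $\mc O_\oa$ (admissible, being a summand of an admissible projective) and $R$ is an admissible projective summand contained in $\ker(\Res P(M)\tto\Res M)$; since $\Res M\in\Opres_\oa$ one may take the admissible presentation of $\Res M$ with $Q$ as its projective term, so the syzygy $\ker(Q\tto\Res M)$ has admissible projective cover, and therefore so does $\Res\Omega M\cong R\oplus\ker(Q\tto\Res M)$, i.e.\ its head lies in $\add\{L_0(\nu):\nu\in\Lnua\}$. Applying the displayed fact once more, now to epimorphisms $\Omega M\tto L(\nu)$, gives $\nu\in\Lnua$, so $P(\Omega M)$ is admissible and $P(\Omega M)\to P(M)\to M\to 0$ exhibits $M$ as an object of $\Opres$. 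I expect the main obstacle to be precisely this last step: showing that $M$ has an admissible head is easy, but controlling the head of the first syzygy $\ker(P(M)\tto M)$ does not reduce to a statement about $M$ alone, and it is this that forces the detour through the restriction to $\g_\oa$ and the $\alpha$-free/$\alpha$-finite dichotomy for simple modules.
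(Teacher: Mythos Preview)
Your argument is correct. The implications $(2)\Leftrightarrow(3)$ and $(1)\Rightarrow(2)$ match the paper's proof exactly, and your use of the $\alpha$-free/$\alpha$-finite dichotomy to control heads under restriction is also what the paper does (though the paper states it more tersely).

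Where you diverge is in the passage from ``the head of $M$ is admissible'' to ``$M\in\Opres$''. You do this by a direct syzygy computation: take the projective cover $P(M)\twoheadrightarrow M$, split $\Res P(M)\cong Q\oplus R$ with $Q$ the projective cover of $\Res M$ and $R$ mapping to zero, and then read off that $\Res\Omega M\cong R\oplus\ker(Q\twoheadrightarrow\Res M)$ has admissible head, whence $\Omega M$ does too. The paper instead packages this step into the coapproximation functor $\mf j:\mc O\to\Opres$ (and its even analogue $\mf j_0$): it proves a separate lemma that $\Res\circ\mf j\cong\mf j_0\circ\Res$, observes that $\Res M\in\Opres_\oa$ is equivalent to $\mf j_0(\Res M)\cong\Res M$, and concludes that the canonical epimorphism $\mf j(M)\twoheadrightarrow M$ becomes an isomorphism after applying $\Res$, hence is an isomorphism. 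Your approach is more elementary and self-contained, avoiding the auxiliary trace/coapproximation machinery; the paper's approach is cleaner once that machinery is in place and isolates a reusable compatibility lemma ($\Res Tr(M)=Tr_0(\Res M)$) that is of independent interest.
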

Before we prove Proposition \ref{lem::CharaOpres}, we need some preparatory results.  Recall that a projective module $M\in \mc O$ is said to be admissible if $M$ is a direct sum of projective modules of the form $P(\la)$ with $\la\in \Lnua$. For any $M\in \mc O$, we let $Tr(M)$ and $Tr_0(\Res M)$ denote the sum of images of homomorphisms from admissible projective modules in $\mc O$ (respectively, in $\mc O_\oa$) to $M$ (respectively, to $\Res M$).
Recall the {\em coapproximation functor} $\mf j_0:\mc O_\oa\rightarrow \Opres_\oa$ with respect to admissible projective modules studied in \cite[Section 2.4]{MaSt04}; see also \cite[Section 3]{Au}. In general, we define the    coapproximation functor $\mf j:\mc O\rightarrow \Opres$ for Lie superalgebras in the same fashion. Let $M\in \mc O$ with an epimorphism  $f:P_M\onto Tr(M)$ from a projective module $P_M\in \mc O$. Then, $\mf j(M)$ is defined to be the quotient module  $P_M/Tr(\text{ker}(f)).$ This definition is independent of the choices made. We refer to \cite[Section 3]{Au} and \cite[Section 2.4]{KM2}  for more details.

We need the following auxiliary lemma.
\begin{lem} \label{lem::commjj0}
	We have $\Res \mf j(M)\cong \mf j_0\Res(M)$, for any $M\in \mc O$.
\end{lem}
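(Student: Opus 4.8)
\textbf{Proof proposal for Lemma \ref{lem::commjj0}.}

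The plan is to unwind both coapproximation functors from their definitions and match the data step by step, using the compatibility between $\Res$, $\Ind$, and admissible projectives established in Lemma \ref{lem:admissible}. First I would observe that, by Lemma \ref{lem:admissible}, $\Res$ sends admissible projectives in $\mc O$ to admissible projectives in $\mc O_\oa$, while $\Ind$ sends admissible projectives in $\mc O_\oa$ to admissible projectives in $\mc O$; moreover $\Res$ is exact and sends projectives to projectives (as $U(\g)$ is free over $U(\g_\oa)$), and $(\Ind,\Res)$ is an adjoint pair with $\Ind$ also right adjoint to $\Res$ up to an autoequivalence. The key consequence I want is that $\Res$ commutes with the trace construction: $\Res Tr(M)=Tr_0(\Res M)$ for any $M\in\mc O$. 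For the inclusion $\subseteq$, any homomorphism $P\to M$ from an admissible projective $P\in\mc O$ restricts to a homomorphism $\Res P\to\Res M$ with $\Res P$ admissible, so $\Res Tr(M)\subseteq Tr_0(\Res M)$. For $\supseteq$, a homomorphism $Q\to\Res M$ from an admissible projective $Q\in\mc O_\oa$ corresponds by adjunction to $\Ind Q\to M$ with $\Ind Q$ admissible; since $Q$ is a direct summand of $\Res\Ind Q$, the image of $Q\to\Res M$ is contained in the image of $\Res(\Ind Q\to M)$, which lies in $\Res Tr(M)$. Hence equality.

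Next I would track the defining resolution through $\Res$. Pick an epimorphism $f:P_M\onto Tr(M)$ from a projective $P_M\in\mc O$, so that $\mf j(M)=P_M/Tr(\ker f)$. Applying $\Res$ (which is exact and preserves projectives) gives an epimorphism $\Res f:\Res P_M\onto\Res Tr(M)=Tr_0(\Res M)$ from the projective $\Res P_M\in\mc O_\oa$, and $\Res(\ker f)=\ker(\Res f)$. Now I need the second crucial commutation: $\Res Tr(\ker f)=Tr_0(\Res\ker f)=Tr_0(\ker\Res f)$. The first equality is again the statement that $\Res$ commutes with traces (applied to the module $\ker f$), and the second is just $\Res\ker f=\ker\Res f$. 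Therefore
\[
\Res\,\mf j(M)=\Res\bigl(P_M/Tr(\ker f)\bigr)=\Res P_M/\Res Tr(\ker f)=\Res P_M/Tr_0(\ker\Res f),
\]
and the right-hand side is, by definition, $\mf j_0(\Res M)$ computed from the epimorphism $\Res f:\Res P_M\onto Tr_0(\Res M)$. Since $\mf j_0$ is independent of the choice of projective cover data, this identifies $\Res\mf j(M)\cong\mf j_0\Res(M)$, functorially in $M$.

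The main obstacle is the claim $\Res Tr(M)=Tr_0(\Res M)$, and in particular the direction $Tr_0(\Res M)\subseteq\Res Tr(M)$: one must be careful that an admissible projective $Q\in\mc O_\oa$ mapping into $\Res M$ really factors (on the level of images) through the restriction of an admissible projective in $\mc O$. The point that makes this work is precisely Lemma \ref{lem:admissible}: $Q$ is a direct summand of $\Res\Ind Q$ and $\Ind Q$ is admissible, so although the individual map $Q\to\Res M$ need not be a restriction of a $\g$-map, its image is contained in the image of $\Res(\Ind Q\to M)\colon\Res\Ind Q\to\Res M$, which suffices since traces are defined as sums of images. A secondary point to check is that the adjunction used — $\Ind$ being simultaneously left and right adjoint to $\Res$ up to an autoequivalence on $\mc O_\oa$ — does not disturb admissibility; this follows because tensoring by the one-dimensional twist $\wedge^{\dim\g_\ob}(\g_\ob)$ permutes the weights $\Lnua$ only by a fixed shift that is harmless for the $\alpha$-freeness condition defining admissibility, or more simply because $\Res\Ind Q$ is admissible by Lemma \ref{lem:admissible} regardless. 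Everything else is routine diagram chasing and the standard fact that the coapproximation functor is well-defined independently of choices.
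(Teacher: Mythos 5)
Your proposal is correct and follows essentially the same route as the paper: the paper likewise reduces the lemma to the single identity $\Res Tr(M)=Tr_0(\Res M)$ and proves both inclusions exactly as you do, via Lemma \ref{lem:admissible} and the adjunction $(\Ind,\Res)$. Your explicit tracking of the defining presentation $P_M\onto Tr(M)$ through the exact, projective-preserving functor $\Res$ just spells out the deduction that the paper leaves implicit behind the phrase ``it suffices to show.''
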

\begin{proof}
	It suffices to show that $\Res Tr(M)=Tr_0(\Res M)$ for any $M\in \mc O$. 	To see this, let $\phi: Q\rightarrow \Res M$ be a homomorphism from an admissible projective module $Q\in\mc O_\oa$ to $\Res M$. This gives rise to $\Ind\phi:\Ind Q\rightarrow M$ with $\Ind Q$ admissible in $\mc O$ by Lemma \ref{lem:admissible}. Now, the image of $\phi$ is contained in the image of $\Res\Ind(\phi): \Res \Ind Q\rightarrow \Res M$ since $ Q$ is a direct summand of the admissible projective module $\Res \Ind Q$, which shows that $Tr(M)\supseteq Tr_0(\Res M).$ Conversely, let $\psi: P\rightarrow M$ be a homomorphism from an admissible projective module $P\in \mc O$. Since the image of $\psi$ is equal to that of $\Res(\psi)$ and $\Res P$ is admissible in $\mc O_\oa$ by Lemma \ref{lem:admissible} again, it follows that they are contained in $Tr_0(\Res M)$.  	This completes the proof.
\end{proof}

\begin{proof}[Proof of Proposition \ref{lem::CharaOpres}]
The equivalence between $(2)$ and $(3)$ is a consequence of \cite[Theorem 2]{KoM}. Also, $(1)\Rightarrow (2)$ is a direct consequence of Lemma \ref{lem:admissible}. So it remains to show that $(2)\Rightarrow (1)$. Since $\Res M\in \Opres_\oa$, it follows that every simple quotient of $M$ is $
 \alpha$-free for any $\alpha \in \Pi_\zeta$. This implies that the top of $M$ is a direct sum of modules of the form $L(\la)$ with $\la \in \Lnua$. Therefore, there is an epimorphism $f: P\onto M$ for some  admissible projective module $P\in \mc O$. Consequently, we obtain an epimorphism from $\mf j(M)$ to  $M$.

 On the other hand,  we note that $\Res M\in \Opres_\oa$ if and only if $\mf j_0(\Res M)\cong \Res M$. It follows that $\Res \mf j(M)\cong \Res M$ by Lemma \ref{lem::commjj0}. Consequently, the epimorphism from $\mf j(M)$ to $M$ is an isomorphism, as desired.
 \end{proof}

\begin{rem}
	We remark that there is an analogue of Lemma \ref{lem::commjj0} for the full subcategory of injectively copresentable modules of $\mc O$ in \cite[Theorem 10]{Ch212}, which is proved by means of Deodhar-Mathieu's version of Enright completion functors. In fact, these functors are isomorphic to certain {\em approximation functors}, which are the dual version of the coapproximation functors introduced above; see also \cite[Section 2, Theorem 2]{KoM}, \cite[Theorem 5]{KM2}.
\end{rem}

\subsection{Standard and proper standard objects.}
 Recall that $P(\la)$ and $P_0(\la)$ denote the projective covers of $L(\la)$ and $L_0(\la)$ in $\mc O$ and $\mc O_\oa$, respectively. For $\la\in\Lambda(\zeta)$, we define the {\em proper standard} object $\ov \Delta(\la)$ for $\mc O^{\vpre}$ to be $P(\la)/ Tr(K_\la)$, where $K_\la$ is the kernel of the canonical quotient $P(\la)\rightarrow M(\la)$. Note that the proper standard objects indeed lie in $\mc O^{\vpre}$  and furthermore
\begin{align}\label{eqn:aux13}
[\ov{\Delta}(\la)/M(\la):L(\gamma)]=0, \text{ for } \gamma\in\Lnua.
\end{align}
We also define the {\em standard} object $\Delta(\la)$ for $\mc O^{\vpre}$ by
\begin{align}
	&\Delta(\la):= \Ind_{\mf p}^{\g} P_{\mf l_\zeta}(\la).
	\end{align}
Here, $P_{\frakl_\zeta}(\la)$ is regarded as a $\frakp$-module by letting $\frak u^+$ act trivially. Similarly, we define $\Delta_0(\la):= \Ind_{\mf p_\oa}^{\g_\oa} P_{\mf l_\zeta}(\la). $
It follows from   Proposition \ref{lem::CharaOpres}  that $\Delta(\la)$ is an object in $\Opres$. We will provide an alternative proof in Corollary \ref{coro::standardob} below.

We set $\mc F(\Delta)$ to be the full subcategory of $\mc O$ consisting of modules which admit a $\Delta$-flag. Similarly, we define $\mc F(\Delta_0)\subset \mc O_\oa$.

\begin{lem} \label{lem::indDelta}
	For any $V\in \mc F(\Delta_0)$, we have $\Ind(V)\in \mc F(\Delta)$.
\end{lem}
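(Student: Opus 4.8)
The plan is to reduce the claim to the analogous statement over $\g_\oa$, which is standard, by exploiting two compatibilities: that $\Ind(-)$ is exact and that it sends the standard object $\Delta_0(\la)$ of $\mc O_\oa$ to (a module with a $\Delta$-flag in) $\mc O$. First I would observe that $\Ind(-) = U(\g)\otimes_{U(\g_\oa)}(-)$ is exact (indeed $U(\g)$ is free as a right $U(\g_\oa)$-module by PBW), so it carries any short exact sequence, and hence any finite filtration, to a filtration of the same length. Thus it suffices to show that $\Ind(\Delta_0(\la))\in \mc F(\Delta)$ for every $\la\in\Lambda(\zeta)$; the general case follows by applying $\Ind$ to a $\Delta_0$-flag of $V$ and concatenating the resulting filtrations.

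Next I would compute $\Ind(\Delta_0(\la))$ directly. By definition $\Delta_0(\la)=\Ind_{\mf p_\oa}^{\g_\oa}P_{\mf l_\zeta}(\la)$, so by transitivity of induction
\[
\Ind(\Delta_0(\la)) = \Ind_{\g_\oa}^{\g}\Ind_{\mf p_\oa}^{\g_\oa}P_{\mf l_\zeta}(\la) = \Ind_{\mf p_\oa}^{\g}P_{\mf l_\zeta}(\la).
\]
Now I would compare $\Ind_{\mf p_\oa}^{\g}$ with $\Ind_{\mf p}^{\g}$, where $\mf p = \mf l_\zeta + \mf u^+$ is the parabolic of the previous subsection. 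Writing $\mf p = \mf p_\oa \oplus \mf u^-_\ob$ (as vector spaces, using the $\Z_2$-compatibility of the parabolic decomposition), PBW gives a $U(\mf p_\oa)$-module isomorphism $U(\mf p)\cong \wedge(\mf u^-_\ob)\otimes U(\mf p_\oa)$; since $\mf u^-_\ob$ is an $\mf l_\zeta$-submodule of $\g_\ob$, the space $\wedge(\mf u^-_\ob)$ is a finite-dimensional $\mf l_\zeta$-module, whence a direct sum of finite-dimensional irreducibles. Tensoring and inducing, $\Ind_{\mf p_\oa}^{\g}P_{\mf l_\zeta}(\la)\cong \Ind_{\mf p}^{\g}\big(\wedge(\mf u^-_\ob)\otimes P_{\mf l_\zeta}(\la)\big)$, and the $\mf l_\zeta$-module $\wedge(\mf u^-_\ob)\otimes P_{\mf l_\zeta}(\la)$ has a filtration with subquotients $P_{\mf l_\zeta}(\mu)$ (indecomposable projectives are preserved, up to direct sums, under tensoring with finite-dimensional modules, and $P_{\mf l_\zeta}(\mu)$ is projective-injective; alternatively one filters $\wedge(\mf u^-_\ob)$ by $\mf l_\zeta$-weight and uses that $P_{\mf l_\zeta}(\la)\otimes(\text{fin.\ dim.})$ is projective, hence a sum of $P_{\mf l_\zeta}(\mu)$'s). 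Since $\Ind_{\mf p}^{\g}$ is exact and sends $P_{\mf l_\zeta}(\mu)$ to $\Delta(\mu)$, we conclude $\Ind(\Delta_0(\la))\in\mc F(\Delta)$.

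I expect the main obstacle to be the bookkeeping in the middle step: one must verify that $\wedge(\mf u^-_\ob)\otimes P_{\mf l_\zeta}(\la)$ genuinely has a $P_{\mf l_\zeta}$-flag (not merely that $\Ind$ of it lies in $\mc O$), and that the weights $\mu$ occurring are of the form $\la + \gamma$ with $\gamma\in\Phi(\wedge(\mf u^-_\ob))$ so that the resulting modules are honest standard objects $\Delta(\mu)$ for $\mc O^{\vpre}$ — i.e., that each such $\mu$ again lies in $\Lambda(\zeta)$, which follows since $\mf u^-_\ob$ weights are orthogonal to the coroots $\alpha^\vee$ for $\alpha\in\Pi_\zeta$ only up to the $\mf l_\zeta$-action, and one uses instead that $P_{\mf l_\zeta}(\la)\otimes E$ remains projective in $\mc O(\mf l_\zeta)$ so its indecomposable summands are automatically of the required form. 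A cleaner route, which I would adopt if the filtration argument gets unwieldy, is: tensoring with a finite-dimensional $\g$-module preserves $\mc F(\Delta)$ (a general fact for properly stratified categories, or checked directly on $\Delta$'s), and $\Ind(V)\cong U(\mf n^-_\ob)\otimes V$ with a suitable $\g$-module structure, so one filters by the $\g_\oa$-socle filtration of $\wedge\mf u^-_\ob$ and reduces to the evident statement $\Ind_{\mf p}^{\g}P_{\mf l_\zeta}(\la)=\Delta(\la)\in\mc F(\Delta)$.
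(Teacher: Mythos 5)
Your reduction to $V=\Delta_0(\la)$ via exactness of $\Ind(-)$, and the identification $\Ind\Delta_0(\la)\cong\Ind_{\mf p}^{\g}\Ind_{\mf p_\oa}^{\mf p}P_{\mf l_\zeta}(\la)$ with $\Ind_{\mf p_\oa}^{\mf p}P_{\mf l_\zeta}(\la)\cong \wedge(\mf u^+_\ob)\otimes P_{\mf l_\zeta}(\la)$, is exactly how the paper's proof begins (note that the odd part of $\mf p=\mf l_\zeta+\fraku^+$ is $\mf u^+_\ob$, not $\mf u^-_\ob$ as you wrote). But there is a genuine gap at the decisive step. Knowing that $\wedge(\mf u^+_\ob)\otimes P_{\mf l_\zeta}(\la)$ is, \emph{as an $\mf l_\zeta$-module}, a direct sum of projective-injectives $P_{\mf l_\zeta}(\mu)$ with $\mu\in\Lnua$ does not let you conclude: you are not applying $\Ind_{\mf p}^{\g}$ to those $\mf l_\zeta$-summands, but to a $\mf p$-module on which $\mf u^+$ acts nontrivially (e.g.\ $\mf u^+_\ob$ acts by left multiplication into the exterior-algebra factor). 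Since $\Delta(\mu)=\Ind_{\mf p}^{\g}P_{\mf l_\zeta}(\mu)$ is defined with $\mf u^+$ acting trivially, what you actually need is a filtration of $\wedge(\mf u^+_\ob)\otimes P_{\mf l_\zeta}(\la)$ by \emph{$\mf p$-submodules} whose subquotients are annihilated by $\mf u^+$ and restrict to $P_{\mf l_\zeta}(\mu)$, $\mu\in\Lnua$. Your sentence ``Since $\Ind_{\mf p}^{\g}$ is exact and sends $P_{\mf l_\zeta}(\mu)$ to $\Delta(\mu)$, we conclude'' skips precisely this, and your ``main obstacle'' paragraph worries about the wrong issues (the existence of an $\mf l_\zeta$-flag and the antidominance of the $\mu$'s are both immediate, since projective-injectivity in $\mc O(\mf l_\zeta)$ is preserved by tensoring with finite-dimensional modules).

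The paper closes this gap with the grading element $H\in\h$ from the parabolic decomposition \eqref{eq::paracomp}, which is central in $\mf l_\zeta$: decompose $E=\wedge(\mf u^+_\ob)$ into $H$-eigenspaces $E_{c_1},\dots,E_{c_k}$ with $\Real(c_1)<\cdots<\Real(c_k)$; each $E_{c_i}$ is an $\mf l_\zeta$-submodule, and since every root $\alpha$ of $\mf u^+$ has $\Real\alpha(H)>0$, both $\mf u^+_\ob E_{c_j}$ and $[\mf u^+_\oa,E_{c_j}]$ land in $\bigoplus_{i>j}E_{c_i}$. Hence the decreasing filtration with terms $\bigoplus_{i\ge j}E_{c_i}\otimes P_{\mf l_\zeta}(\la)$ is by $\mf p$-submodules on whose subquotients $\mf u^+$ acts trivially; this is the step you must supply (your passing remark about filtering ``by $\mf l_\zeta$-weight'' points in this direction but is not carried out). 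Your proposed ``cleaner route'' does not work as stated: $\Ind(V)=U(\g)\otimes_{U(\g_\oa)}V$ is not of the form $E\otimes V$ for a finite-dimensional $\g$-module $E$, so preservation of $\mc F(\Delta)$ under tensoring with finite-dimensional $\g$-modules (itself unproved here, and requiring the same $H$-filtration idea) cannot be applied to it.
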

\begin{proof}
	It is sufficient to prove this for $V=\Delta_0(\la)$ with $\la\in \Lnua$. We note that \begin{align}
	&\Ind\Delta_0(\la)\cong \Ind^{\g}_{\g_\oa}\Ind^{\mf \g_\oa}_{\mf p_\oa}P_{\mf l_\zeta}(\la)\cong \Ind^{\g}_{\mf p}\Ind^{\mf p}_{\mf p_\oa}P_{\mf l_\zeta}(\la).
	\end{align}
 We claim that the $\mf p$-module $\Ind^{\mf p}_{\mf p_\oa}P_{\mf l_\zeta}(\la)$ has a filtration any subquotient $S$ of which is of the form $\Res_{\mf l_\zeta}^{\mf p}S\cong P_{\mf l_\zeta}(\gamma)$ with $\gamma \in \Lnua$ and $\mf u^+S=0$. To see this, we recall the grading operator $H\in \mf h$ introduced in \eqref{eq::paracomp}.

 Set $E:=U(\mf u^+_\ob)$, which is an $\mf l_\zeta$-submodule of $U(\mf g)$ under the adjoint action.
 For each complex number $c\in \C$, we let $E_c$ denote the $c$-eigenspace of $H$ under the adjoint action. Since $H$ lies in the center of $\mf l_\zeta$, it follows that each $E_c$ is an $\mf l_\zeta$-submodule of $E$. Then, $E$ decomposes into a sum of $\mf l_\zeta$-submodules
 \begin{align}
 &E=\oplus_{i=1}^kE_{c_i},
 \end{align} for some $c_1,\ldots,c_k\in \C$ such that $\text{Re}(c_1)<\text{Re}(c_2)<\cdots<\text{Re}(c_k)$.

 We may note that \begin{align}
&\Res_{\mf l_\zeta}^{\mf p}\Ind_{\mf p_\oa}^{\mf p}P_{\mf l_\zeta}(\la)\cong E\otimes P_{\mf l_\zeta}(\la) = \bigoplus_{i=1}^k (E_{c_i} \otimes P_{\mf l_\zeta}(\la)),
\end{align} is a direct sum of projective-injective $\mf l_\zeta$-modules in the category $\mc O(\mf l_\zeta)$, each of which is of the form $P_{\mf l_\zeta}(\gamma)$ with $\gamma\in \Lnua$. Furthermore, we define a filtration
\begin{align}\label{eq::45}
&0=\Ind_{\mf p_\oa}^{\mf p}P_{\mf l_\zeta}(\la)_{k+1}\subset \Ind_{\mf p_\oa}^{\mf p}P_{\mf l_\zeta}(\la)_k \subset \Ind_{\mf p_\oa}^{\mf p}P_{\mf l_\zeta}(\la)_{k-1}\subset \cdots \subset \Ind_{\mf p_\oa}^{\mf p}P_{\mf l_\zeta}(\la)_1 = \Ind_{\mf p_\oa}^{\mf p}P_{\mf l_\zeta}(\la).
\end{align} by letting
\begin{align}
&\Ind_{\mf p_\oa}^{\mf p}P_{\mf l_\zeta}(\la)_j:= \bigoplus_{i=j}^k (E_{c_i} \otimes P_{\mf l_\zeta}(\la)),
\end{align} for $j=1,\ldots, k$. Fix $1\leq j\leq k$. We note that both $\mf u_\ob^+E_{c_j}$ and $[\mf u_\oa^+, E_{c_j}]$ are subspaces of $ \bigoplus_{i>j}E_{c_i}.$ Consequently, we have
\begin{align}
	&\mf u^+(\Ind_{\mf p_\oa}^{\mf p}P_{\mf l_\zeta}(\la))_j \subset (\Ind_{\mf p_\oa}^{\mf p}P_{\mf l_\zeta}(\la))_{j+1}.
	\end{align}
Therefore, the filtration \eqref{eq::45} gives rise to the desired filtration. This completes the proof.
\end{proof}

\begin{lem}  \label{lem::8}
	Suppose that $M\in \mc O$ and $c\in \C$. Let $M_c$ be  the $c$-eigenspace with respect to the action of the grading operator $H$ from \eqref{eq::paracomp}. Then we have
	\begin{itemize}
		\item[(1)] There is $d\in \C$ such that $M_{d} \neq 0$ and $M_c=0$ for  $\text{Re}(c)>\text{Re}(d).$
		\item[(2)]  $M_c$ is an object in $\mc O(\mf l_\zeta)$.
		\item[(3)] If $M$ is a (possibly infinite) direct sum of projective-injective modules in $\mc O(\mf l_\zeta)$, then so is $M_c$.
	\end{itemize}
\end{lem}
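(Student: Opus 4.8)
The plan is to exploit that, as $M$ is $\mf h$-semisimple, the generalised $H$-eigenspace $M_c$ is simply $\bigoplus_{\mu(H)=c}M_\mu$, a sum of weight spaces of $M$; thus $M=\bigoplus_c M_c$, every $M_c$ has finite-dimensional weight spaces, and, since $H$ is central in $\mf l_\zeta$, each element of $\mf l_\zeta$ commutes with $H$ and hence preserves $M_c$ (this also forces $\alpha(H)=0$ for every root $\alpha$ of $\mf l_\zeta$). For (1), fix weight-vector generators $v_1,\dots,v_k$ of $M$ over $U(\g)$; by local $\mf n^+$-finiteness each $U(\mf n^+)v_i$ is finite-dimensional, so the set $S$ of all weights occurring in $\bigcup_i U(\mf n^+)v_i$ is finite, and the PBW factorisation $U(\g)=U(\mf n^-)U(\mf h)U(\mf n^+)$ shows every weight of $M$ lies in $S-\mZ_{\geq 0}\Phi^+$. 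Using $\Real\alpha(H)\geq 0$ for $\alpha\in\Phi^+$, we get $\Real\mu(H)\leq r:=\max_{\sigma\in S}\Real\sigma(H)$ for all weights $\mu$ of $M$; the maximum is attained at some $\sigma_0\in S$, and $M_{\sigma_0}\neq 0$, so $d:=\sigma_0(H)$ satisfies $M_d\neq 0$ while $M_c=0$ whenever $\Real c>r=\Real d$.

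For (2), the only point needing proof is finite generation of $M_c$ over $U(\mf l_\zeta)$, the remaining requirements for membership in $\mc O(\mf l_\zeta)$ being inherited from $M\in\mc O$ together with the $\mf l_\zeta$-stability just noted. Put $W:=\sum_i U(\mf n^+)v_i$, a finite-dimensional $\mf h$-stable subspace of $M$; since $U(\g)=U(\mf u^-)U(\mf l_\zeta)U(\mf u^+)$ and $U(\mf u^+)v_i\subseteq W$, we have $M=U(\mf u^-)\,U(\mf l_\zeta)\,W=U(\mf l_\zeta)\bigl(U(\mf u^-)W\bigr)$, the last equality being PBW for the subalgebra $\mf l_\zeta\oplus\mf u^-$. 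Decompose $W=\bigoplus_d W_d$ into its finitely many nonzero $H$-eigenspaces and $U(\mf u^-)=\bigoplus_e U(\mf u^-)_e$ into $\ad H$-eigenspaces. Because every root of $\mf u^-$ has strictly negative real part on $H$, bounded away from $0$, each $U(\mf u^-)_e$ is finite-dimensional. As $U(\mf l_\zeta)$ has $\ad H$-degree $0$, extracting the $H$-eigenvalue $c$ from $M=U(\mf l_\zeta)\bigl(U(\mf u^-)W\bigr)$ yields
\begin{align*}
M_c=\sum_{e+d=c}U(\mf l_\zeta)\bigl(U(\mf u^-)_eW_d\bigr),
\end{align*}
a finite sum in which each space $U(\mf u^-)_eW_d$ is finite-dimensional. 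Hence $M_c$ is generated over $U(\mf l_\zeta)$ by the finite set $\bigcup_{e+d=c}U(\mf u^-)_eW_d$, so $M_c\in\mc O(\mf l_\zeta)$.

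For (3), I read the hypothesis as asserting that $\Res^\g_{\mf l_\zeta}M$ is a direct sum of projective-injective modules in $\mc O(\mf l_\zeta)$ (which is how it will be applied, and then $M_c$ denotes $(\Res^\g_{\mf l_\zeta}M)_c$). Let $Q$ be one of the summands. Since $H$ is central in $\mf l_\zeta$ and acts semisimply on $Q\in\mc O(\mf l_\zeta)$, the module $Q$ decomposes over $\mf l_\zeta$ as the direct sum $\bigoplus_c Q_c$ of its $H$-eigenspaces, which is finite since $Q$ is finitely generated; hence each $Q_c$ is a direct summand of $Q$ and so is again projective-injective in $\mc O(\mf l_\zeta)$. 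As taking the $c$-eigenspace of $H$ commutes with arbitrary direct sums, $M_c$ is the direct sum of the projective-injective modules $Q_c$, proving (3).

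The step I expect to be the main obstacle is the finite-generation claim in (2): it is precisely here that the parabolic grading must be used — that $\mf u^-$ occupies strictly negative $H$-degrees while $\mf l_\zeta$ sits in degree $0$ — in order to cut an a priori infinitely generated $U(\mf l_\zeta)$-module down to finitely many generators, and one should take care to verify that the $\ad H$-eigenspaces of $U(\mf u^-)$ really are finite-dimensional. Parts (1) and (3) are then routine, resting on the weights of $M$ being bounded above and on $H$ being central in $\mf l_\zeta$, respectively.
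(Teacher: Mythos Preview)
Your proof is correct. Parts (1) and (3) match the paper's argument in spirit; the only substantive difference is in Part (2). The paper proves finite generation of $M_c$ by first treating the case where $M$ is simple (then $M$ is a quotient of a parabolically induced module $\Ind_{\mf p_\oa}^{\g}L_{\mf l_\zeta}(\la)$, whose $H$-eigenspaces are visibly finitely generated over $U(\mf l_\zeta)$) and then inducting on the length of $M$ in $\mc O$. Your argument instead goes directly: from $M=U(\mf l_\zeta)\bigl(U(\mf u^-)W\bigr)$ via PBW for $\mf l_\zeta\oplus\mf u^-$, you extract the $c$-eigenspace and use that the $\ad H$-grading on $U(\mf u^-)$ has finite-dimensional components (because the roots of $\mf u^-$ have real part bounded away from zero on $H$) to exhibit a finite generating set. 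This is more elementary and avoids the reduction to simple modules, at the cost of making the grading bookkeeping explicit. For Part (3), the paper sharpens your observation slightly by noting that an \emph{indecomposable} projective-injective summand, being indecomposable with $H$ acting semisimply and centrally, must already live in a single $H$-eigenspace; your version, that each $Q_c$ is a direct summand of $Q$ and hence projective-injective, is of course equivalent.
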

\begin{proof} Since $H$ acts on any $\mf h$-weight subspace of $M$ as a scalar and $M$ is finitely generated as an $\mf n^-$-module, the conclusion in Part (1) follows.

	For the proof of Part (2), we  note that $M_c$ is an $\mf l_\zeta$-module, locally finite over $ \mf l_\zeta\cap \mf b$ and semisimple over $\mf h$. Now, if $M$ is  simple, then $M$ is an epimorphic image of $\Ind_{\mf p_\oa}^{\mf g}L_{\mf l_\zeta}(\la)$, for some $\la\in\frakh^\ast$. Since $\Ind_{\mf p_\oa}^{\mf g}L_{\mf l_\zeta}(\la)_c$ is finitely generated, so is $M_c$. Consequently, $M_c \in \mc O(\mf l_\zeta)$ by induction on the length of the module $M$ in $\mc O$.
	
	Finally, suppose that $\Res_{\mf l_\zeta}^{\g}M= \bigoplus_{\la \in \mc X}X_\la$, where $X_\la\cong \plz(\la)$ are projective-injective modules in $\mc O(\mf l_\zeta)$ for some $\la$ lying in some index set $\mc X$. Since each $X_\la$ is indecomposable and $H$ acts  on $X_\la$ semisimply, it follows that $(H-\chi_{\la}^{\mf l_\zeta}(H))X_\la=0.$ Therefore, $M_c$ is projective-injective in $\mc O(\mf l_\zeta)$. This completes the proof.
\end{proof}

\begin{lem} \label{lem::9}
	Let $M\in \mc F(\Delta)$. Then we have
	\begin{itemize}
		\item[(1)] $\Res_{\mf l_\zeta}^{\g}M$ is a direct sum of projective-injective $\mf l_\zeta$-modules in $\mc O(\mf l_\zeta).$
    \item[(2)]	$\Res M\in \mc F(\Delta_0)$.
	\end{itemize}
\end{lem}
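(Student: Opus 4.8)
The plan is to reduce both parts to the case $M=\Delta(\la)$ with $\la\in\Lnua$. Since $\Res$ and $\Res^\g_{\mf l_\zeta}$ are exact and $\Delta$-flags are finite, a $\Delta$-flag of $M$ restricts to finite filtrations of $\Res^\g_{\mf l_\zeta}M$ and of $\Res M$ with sections $\Res^\g_{\mf l_\zeta}\Delta(\la_i)$, respectively $\Res\Delta(\la_i)$. Because $\mc F(\Delta_0)$ is closed under extensions, (2) for $\mc F(\Delta)$ follows from (2) for each $\Delta(\la)$; and (1) follows once one knows that a module carrying a finite filtration whose sections are direct sums of projective-injectives of $\mc O(\mf l_\zeta)$ is again such a direct sum --- which one checks on each $H$-eigenspace, where by Lemma~\ref{lem::8}(2) everything lies in $\mc O(\mf l_\zeta)$ and the induced filtration has projective sections, hence splits.

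For (1), the PBW theorem gives $\Res^\g_{\mf l_\zeta}\Delta(\la)\cong U(\mf u^-)\otimes P_{\mf l_\zeta}(\la)$ as $\mf l_\zeta$-modules, with $\mf l_\zeta$ acting adjointly on $U(\mf u^-)$. Decomposing into $H$-eigenspaces and using that every root of $\mf u^-$ has strictly negative real part under $H$, each eigenspace is $F\otimes P_{\mf l_\zeta}(\la)$ for a finite-dimensional graded piece $F$ of $U(\mf u^-)$, hence lies in $\mc O(\mf l_\zeta)$. Since $\la\in\Lnua$, the module $P_{\mf l_\zeta}(\la)$ is projective-injective in $\mc O(\mf l_\zeta)$, and projective-injectives there are stable under tensoring with finite-dimensional modules --- the same structural input used in the proof of Lemma~\ref{lem::indDelta} --- so each eigenspace, and hence $\Res^\g_{\mf l_\zeta}\Delta(\la)$, is a direct sum of projective-injectives.

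For (2) it suffices to show $\Res_{\g_\oa}\Delta(\la)=\Res_{\g_\oa}\Ind^\g_{\mf p}P_{\mf l_\zeta}(\la)\in\mc F(\Delta_0)$. I would use a PBW/Mackey-type filtration: writing $U(\mf u^-)=U(\mf u^-_\oa)\,\Lambda(\mf u^-_\ob)$ and filtering $\Delta(\la)$ by degree in $\mf u^-_\ob$, the associated graded is $\bigoplus_{j\ge 0}\Ind_{\mf p_\oa}^{\g_\oa}\bigl(\Lambda^{j}(\g_\ob/\mf p_\ob)\otimes\Res^{\mf l_\zeta}_{(\mf l_\zeta)_\oa}P_{\mf l_\zeta}(\la)\bigr)$, with $\mf u^+_\oa$ acting nilpotently and $(\mf l_\zeta)_\oa$ diagonally. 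By an argument analogous to Lemma~\ref{lem:admissible}, the $(\mf l_\zeta)_\oa$-module $\Res^{\mf l_\zeta}_{(\mf l_\zeta)_\oa}P_{\mf l_\zeta}(\la)$ is a direct sum of admissible projectives $P_{(\mf l_\zeta)_\oa}(\gamma)$, $\gamma\in\Lnua$; this survives tensoring with the finite-dimensional $\Lambda^{j}(\g_\ob/\mf p_\ob)$; and filtering $\Lambda^{j}(\g_\ob/\mf p_\ob)$ by its (strictly negative) $H$-grading refines each graded piece into a $\mf p_\oa$-filtration whose sections are inflations of admissible projectives. Applying $\Ind_{\mf p_\oa}^{\g_\oa}$ then exhibits each graded piece, and therefore $\Res_{\g_\oa}\Delta(\la)$, as an object of $\mc F(\Delta_0)$. (Alternatively, granting a characterization of $\mc F(\Delta_0)$ inside $\Opres_\oa$ by projectivity of the $(\mf l_\zeta)_\oa$-restriction, (2) can be read off from (1) and Proposition~\ref{lem::CharaOpres}.)

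The main obstacle is the PBW bookkeeping in (2): since $\g_\oa+\mf p_\ob$ is not a subalgebra one cannot induce in stages through an intermediate subalgebra, so one must verify directly that filtering $U(\g)$, as a $(U(\g_\oa),U(\mf p))$-bimodule, by $\mf u^-_\ob$-degree is compatible with both the left $\g_\oa$-action and the right $\mf p$-action --- the key point being that left multiplication by $\mf u^+_\oa$ never raises the $\mf u^-_\ob$-degree because the $\mf p_\ob$-parts of the commutators $[\mf u^+_\oa,\mf u^-_\ob]$ are absorbed into $P_{\mf l_\zeta}(\la)$, on which $\mf u^+$ acts by zero --- and then identify the associated graded bimodule precisely enough to extract the $\Delta_0$-flag.
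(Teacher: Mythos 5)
Your argument is correct, and for Part (1) it coincides with the paper's: reduce to $M=\Delta(\la)$, use the PBW isomorphism $\Res^{\g}_{\mf l_\zeta}\Delta(\la)\cong U(\mf u^-)\otimes P_{\mf l_\zeta}(\la)$, and invoke stability of projective-injectives in $\mc O(\mf l_\zeta)$ under tensoring with finite-dimensional modules; your eigenspace-wise justification of the reduction step is more detailed than the paper's one-line ``it suffices to prove the assertions for $M=\Delta(\la)$'' but is the intended content, given Lemma \ref{lem::8}. Where you genuinely diverge is Part (2): the paper simply observes that $\Delta(\la)$ has a $\g_\oa$-Verma flag and then cites Mazorchuk--Stroppel \cite[Proposition 2.13(i),(ii)]{MaSt04}, which (combined with Part (1)) characterizes membership in $\mc F(\Delta_0)$ --- this is exactly the route you sketch in your closing parenthesis. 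Your main argument instead builds the $\Delta_0$-flag by hand, filtering $\Res\Delta(\la)$ by $\mf u^-_\ob$-degree and then refining each graded piece $\Ind_{\mf p_\oa}^{\g_\oa}\bigl(\Lambda^{j}(\mf u^-_\ob)\otimes P_{\mf l_\zeta}(\la)\bigr)$ by the $H$-grading on $\Lambda^{j}(\mf u^-_\ob)$, exactly as in the proof of Lemma \ref{lem::indDelta} for $U(\mf u^+_\ob)$; the commutator bookkeeping you flag does go through ($[\mf u^+_\oa,\mf u^-_\ob]$ has components only in $\mf u^-_\ob$ and $\mf u^+_\ob$, the latter being pushed onto $P_{\mf l_\zeta}(\la)$ where $\mf u^+$ acts by zero, never raising the odd degree), and extension-closure of $\mc F(\Delta_0)$ finishes it. This buys self-containedness at the cost of length. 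One small inaccuracy: $\mf l_\zeta$ is by construction a Levi subalgebra of $\g_\oa$, hence purely even, so $\Res^{\mf l_\zeta}_{(\mf l_\zeta)_\oa}P_{\mf l_\zeta}(\la)$ is just $P_{\mf l_\zeta}(\la)$ and no analogue of Lemma \ref{lem:admissible} is needed there --- projective-injectivity for $\la\in\Lnua$ is immediate.
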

\begin{proof}
It suffices to prove the assertions for $M= \Delta(\la)$, for a given $\la \in \Lnua$.

We have $\Res_{\mf l_\zeta}^{\g}\Delta(\la)\cong U(\mf u^-)\otimes \plz(\la)$, and hence $\Res_{\mf l_\zeta}^{\g}\Delta(\la)$ is a direct sum of projective-injective $\mf l_\zeta$-modules in $\mc O(\mf l_\zeta)$. This proves Part (1).

Since $\Delta(\la)$ has a $\frakg_\oa$-Verma flag as well,  Part (2) now follows from \cite[Proposition 2.13(i),(ii)]{MaSt04}.
\end{proof}

The following is a generalization of \cite[Corollary 2.14]{MaSt04}, where the case of Lie algebras  was considered. Here we provide an alternative proof.
\begin{prop} \label{prop::FDsummand}
	The full subcategory $\mc F(\Delta)$ is closed under taking direct summands.
\end{prop}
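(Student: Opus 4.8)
The plan is to reduce the statement for $\mc F(\Delta)$ in $\mc O$ to the corresponding statement for $\mc F(\Delta_0)$ in $\mc O_\oa$, which is known by \cite[Corollary 2.14]{MaSt04}. Suppose $M = M_1 \oplus M_2$ with $M \in \mc F(\Delta)$; we want to show $M_1, M_2 \in \mc F(\Delta)$. First I would apply the restriction functor: by Lemma \ref{lem::9}(2) we have $\Res M \in \mc F(\Delta_0)$, and since $\Res$ is additive, $\Res M = \Res M_1 \oplus \Res M_2$, so by \cite[Corollary 2.14]{MaSt04} each $\Res M_i \in \mc F(\Delta_0)$. In particular each $M_i$ is an object of $\mc O_\Z$ whose restriction is $\Delta_0$-flagged.

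Next I would upgrade this to a $\Delta$-flag for $M_i$ itself. The natural tool is the grading operator $H$ from \eqref{eq::paracomp}: by Lemma \ref{lem::8}(1), $M_i$ has a top eigenvalue $d$ for $H$ (with $\text{Re}(c) \le \text{Re}(d)$ for all eigenvalues $c$), and the $d$-eigenspace $(M_i)_d$ is an object of $\mc O(\mf l_\zeta)$ on which $\mf u^+$ acts by zero. Because $\Res M_i \in \mc F(\Delta_0)$ and $\Res_{\mf l_\zeta}^\g M_i$ is (by Lemma \ref{lem::9}(1) applied after knowing $M_i \in \mc F(\Delta)$ — but we cannot yet assume that, so instead directly) a direct sum of projective-injective $\mf l_\zeta$-modules: here I would argue that $\Res_{\mf l_\zeta}^\g M_i$ is a direct summand of $\Res_{\mf l_\zeta}^\g M$, which is a direct sum of projective-injectives by Lemma \ref{lem::9}(1), and since the category of projective-injective modules in $\mc O(\mf l_\zeta)$ is closed under direct summands (being an additive Krull–Schmidt category of the form $\add$ of a fixed module), $(M_i)_d$ is projective-injective in $\mc O(\mf l_\zeta)$, hence isomorphic to a direct sum of modules $P_{\mf l_\zeta}(\gamma)$ with $\gamma \in \Lnua$. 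Since $\mf u^+$ kills $(M_i)_d$ and $(M_i)_d$ generates a $\frakp$-submodule, Frobenius reciprocity gives a map $\Ind_{\mf p}^\g (M_i)_d \to M_i$, i.e. a map $N_i \to M_i$ where $N_i \in \mc F(\Delta)$ is a direct sum of standard objects $\Delta(\gamma)$.

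I would then show this map realizes $\Delta(\gamma)$-subquotients at the top: the composite $N_i \to M_i \hookrightarrow M$ combined with the analogous map $N = N_1 \oplus N_2 \to M$ recovers, on the top $H$-eigenspace, an isomorphism, so $N \to M$ is surjective in the appropriate graded sense. An induction on the number of distinct $H$-eigenvalues of $M_i$ (finite by Lemma \ref{lem::8}(1)) then finishes the job: let $K_i$ be the kernel of $N_i \to M_i$ restricted so that on the top eigenspace it is an isomorphism; one checks $M_i / (\text{image of the top }\Delta\text{-part})$ again satisfies the hypotheses (its restriction is $\Delta_0$-flagged by the corresponding statement downstairs, applied to the short exact sequence of $\Delta_0$-flagged modules and using that $\mc F(\Delta_0)$ is closed under the relevant quotients / extensions), and has strictly fewer $H$-eigenvalues in its support, so it is $\Delta$-flagged by induction; splicing gives a $\Delta$-flag of $M_i$.

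The main obstacle I expect is the inductive step controlling what happens after quotienting out the top layer: one must ensure that $M_i$ modulo the image of $\bigoplus_\gamma \Delta(\gamma)^{\oplus ?}$ (built from $(M_i)_d$) is again an object whose restriction to $\g_\oa$ lies in $\mc F(\Delta_0)$ and whose restriction to $\mf l_\zeta$ is projective-injective, so that the induction applies. This requires knowing that $\Delta(\gamma) \to M_i$ is injective with $\Delta_0$-flagged cokernel on restriction — which follows because $\Res \Delta(\gamma) \in \mc F(\Delta_0)$, $\Res M_i \in \mc F(\Delta_0)$, and in $\mc O_\oa$ a map between $\Delta_0$-flagged modules that is an isomorphism on the top $H$-layer is a split injection onto a $\Delta_0$-subflag by the standard homological properties of $\mc F(\Delta_0)$ (e.g. $\Ext^1(\Delta_0(\mu), \Delta_0(\nu)) = 0$ when $\text{Re}$ of the $H$-weight of $\nu$ is $\ge$ that of $\mu$, via \cite[Proposition 2.13]{MaSt04}). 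Once that is in hand, the eigenvalue-count induction closes cleanly.
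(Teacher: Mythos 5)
Your overall strategy coincides with the paper's: isolate the top $H$-eigenspace $(M_i)_d$, observe that it is projective-injective over $\mf l_\zeta$ and killed by $\mf u^+$, use Frobenius reciprocity to produce $N_i=\Ind_{\mf p}^{\g}(M_i)_d\to M_i$, and peel off this top layer by induction. The gap is exactly at the step you flag as crucial: the claim that $N_i\to M_i$ is injective with well-behaved cokernel because ``a map between $\Delta_0$-flagged modules that is an isomorphism on the top $H$-layer is a split injection.'' That principle is false, and it fails precisely because the odd directions are invisible to it. Indeed $\Res N_i\cong\Ind_{\mf p_\oa}^{\g_\oa}\bigl(\Lambda(\mf u^-_\ob)\otimes(M_i)_d\bigr)$ contains, besides the top piece $\Ind_{\mf p_\oa}^{\g_\oa}(M_i)_d$, lower $\Delta_0$-layers coming from $\Lambda^{>0}(\mf u^-_\ob)\otimes(M_i)_d$, and nothing in the $\g_\oa$-module structure of $M_i$ forces these to inject. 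Concretely, take $\g=\gl(1|1)$ and $\zeta=0$, so $\mf l_\zeta=\mf h$, $\Delta(\la)=M(\la)$, and every object of $\mc O_\oa$ is $\Delta_0$-flagged: for an atypical $\la$ with $\dim L(\la)=1$, the map $\Res M(\la)\cong\C_\la\oplus\C_{\la-\alpha}\to\C_\la=\Res L(\la)$ is an isomorphism on the top $H$-layer, both sides lie in $\mc F(\Delta_0)$, all relevant $\Ext^1$'s vanish, yet the map is not injective (and $L(\la)\notin\mc F(\Delta)$). Consequently the hypotheses you propagate through your induction --- $\Res M_i\in\mc F(\Delta_0)$ and $\Res_{\mf l_\zeta}^{\g}M_i$ projective-injective --- are genuinely insufficient: a module can satisfy both and fail to lie in $\mc F(\Delta)$, so the inductive step as stated proves a false statement. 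The injectivity must use that $M_i$ is a summand of a module $M$ that actually has a $\Delta$-flag.

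That is what the paper's argument supplies. By induction on the length of a $\Delta$-flag of the ambient module $M$, it splits the top $\mf l_\zeta$-eigenvalue sequence $0\to(M^{i+1})_d\to(M^i)_d\to(M^i/M^{i+1})_d\to 0$ (using projective-injectivity over $\mf l_\zeta$), lifts this to a $\g$-module splitting of $0\to M^{i+1}\to M^i\to\Delta(\la)\to 0$, and concludes that for \emph{every} $\mf l_\zeta$-direct summand $V$ of $M_d$ one has $U(\g)V\cong\Ind_{\mf p}^{\g}V$ and $M/U(\g)V\in\mc F(\Delta)$. Applying this with $V=(M_1)_d$ gives both the injectivity you need and a $\Delta$-flag on $M/U(\g)(M_1)_d=\bigl(M_1/U(\g)(M_1)_d\bigr)\oplus M_2$, after which an induction on length closes the argument. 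Your opening reduction via $\Res$ and \cite[Corollary 2.14]{MaSt04} is correct but cannot substitute for this step.
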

\begin{proof}
 Let $M\in \mc F(\Delta)$. Set $N$ to be a direct summand of $M$. We shall show that $N\in \mc F(\Delta)$. Let
 \begin{align}
 &0=M^{k+1}\subset M^k\subset M^{k-1}\subset \cdots\subset M^1 =M \label{eq::48}
 \end{align} be a $\Delta$-flag.

 We decompose $\Res^{\g}_{\mf l_\zeta} M$ into a direct sum
\begin{align}
&M=\bigoplus_{c\in \C} M_c,
\end{align} where $M_c$ is the $c$-eigenspace with respect to the action of the grading operator $H$ from \eqref{eq::paracomp}. By Lemmas \ref{lem::8} and \ref{lem::9}, each $M_c$ is a projective-injective module in $\mc O(\mf l_\zeta)$. Also, there is a complex number $d\in \C$ such that $M_d\neq 0$ and $M_c=0$ for any $c\in \C$ with $\text{Re}(c)>\text{Re}(d)$.

Note that  $U(\g)V=U(\mf u^-)V$, for any direct summand  $V$ of $M_d$. We claim that both $U(\g)V$ and  $M/(U(\g)V)$ lie in $\mc F(\Delta)$. We shall prove by induction on the length $k$. The conclusion for the case $k=1$ holds since $U(\g)M_d=M$ in this case.

 Since $\eqref{eq::48}$ is a $\Delta$-flag, there is  $1\leq   i< k+1$ with the canonical quotient $p: M^{i}\onto M^{i}/M^{i+1}$ such that
 \begin{align}
 	&(M^{i}/M^{i+1})_d\cong (\Ind_{\mf p}^\g\plz(\la))_d\cong \plz(\la),
 \end{align}  for some $\la\in \Lnua.$  Consider the following split short exact sequence induced by $p$  and the inclusion $M^{i+1}\hookrightarrow M^i$
\[0\rightarrow (M^{i+1})_d\rightarrow (M^i)_d \rightarrow (M^{i}/M^{i+1})_d\rightarrow 0\]
in $\mc O(\mf l_\zeta)$.  We get an $\mf l_\zeta$-submodule $S$ of $M_d$ such that $p|_S: S\rightarrow \plz(\la)$ is an isomorphism of $\mf  l_\zeta$-modules. Since $U(\g)S=U(\mf u^-)S$, as observed above, it follows that $p$ restricts to a $\g$-module isomorphism from $U(\mf g)S$ to $M^{i}/M^{i+1}\cong \Ind_{\mf p}^\g\plz(\la).$ Consequently, we get a split short exact sequence
\begin{align}
&0\rightarrow M^{i+1}\rightarrow M^i \rightarrow M^{i}/M^{i+1}\rightarrow 0
\end{align} in $\mc O$.

Put $K:=U(\g)S$, and so $M^{i}=K\oplus M^{i+1}$. Consider the exact sequence
\begin{align}
	&0\rightarrow M^{i+1}\rightarrow M/K \rightarrow M/M^{i}\rightarrow 0.
\end{align} Since both $M^{i+1}$ and $M/M^i$ lie in $\mc F(\Delta)$, it follows that $M/K$ is an object in $\mc F(\Delta)$. By induction it follows that $U(\mf g)M_d=U(\mf u^-)M_d\cong \Ind_{\mf p}^\g M_d$ is a direct sum of standard objects $\Delta(\mu)$, for $\mu\in \Lnua$, and $M/(U(\g)M_d)$ lies in $\mc F(\Delta)$. Let $V$ be a direct summand of $M_d$, then $U(\g)V\cong \Ind_{\mf p}^\g V\in \mc F(\Delta)$ and $U(\mf g)(M/V)\cong \Ind_{\mf p}^\g (M/V)\in \mc F(\Delta)$. Consequently, we have $M/(U(\g)V)\in \mc F(\Delta)$, as desired.

Finally, let $M=N\oplus N'$. Then, either $N_d\neq 0$ or $N'_d\neq 0$. Without loss of generality, we may assume that $N_d\neq 0$, then  $M/(U(\g)N_d) = (N/U(\g)N_d)\oplus N'$ has a $\Delta$-flag by the argument above. By induction on the length of $M$, it follows that $N/(U(\g)N_d)$ and $N'$ have $\Delta$-flags, and hence $N$ as well. This completes the proof.
\end{proof}

\subsection{Stratified structure and BGG reciprocity}
 We denote by $\leq$ the partial order on $\mf h^\ast$ induced by the triangular decomposition \eqref{eq::tri}. Namely, it is the transitive closure of the relations
	$$\begin{cases}
	\lambda-\alpha \le\lambda, &\mbox{for a positive root $\alpha$},\\
	\lambda+\alpha \le\lambda, &\mbox{for a negative root $\alpha$}.
	\end{cases}$$

Recall the Serre quotient category $\OI$ and the canonical quotient functor $\pi:\mc O_\Z\rightarrow\OI$  associated to $\zeta$ from Section \ref{sect::SerreQ}.
Lemma \ref{lem::indDelta}, combined with Proposition \ref{prop::FDsummand}, has the following consequence. Namely, the category $\mc O^{\vpre}$ is stratified.
	\begin{thm} \label{thm::str}
		For any $\la \in \Lnua$, we have the following.
		\begin{itemize}
			\item[(1)] There is a short exact sequence in $\mc O$ of the form
			\begin{align}
			&0\rightarrow K(\la)\rightarrow P(\la)\rightarrow \Delta(\la)\rightarrow 0, \label{eq::413}
			\end{align} such that $K(\la)$ has a $\Delta$-flag, subquotients of which are isomorphic to $\Delta(\mu)$ with $\la<\mu$.
			\item[(2)] The object $\pi\ov \Delta(\la)$ surjects onto $\pi L(\la)$, and its kernel has a composition series, subquotients of which are isomorphic to $\pi L(\mu)$ with $\mu\in \Lnua$ and $\mu<\la$.
			\item[(3)] The object $\pi(\Delta(\la))$ has a filtration, subquotients of which are isomorphic to $\ov \Delta(\la)$.
		\end{itemize}
	\end{thm}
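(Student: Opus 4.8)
The plan is to establish the three assertions about the stratified structure of $\mc O^{\vpre}$ by building on the two structural inputs just proved: Lemma \ref{lem::indDelta} (induction of $\Delta_0$-flags gives $\Delta$-flags) and Proposition \ref{prop::FDsummand} ($\mc F(\Delta)$ is closed under summands), together with the characterization of $\mc O^{\vpre}$ in Proposition \ref{lem::CharaOpres} and the equivalence $\pi:\mc O^{\vpre}\simto\OI$ from \cite[Lemma 12]{CCM2}.

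For Part (1), I would first note that $P(\la)$ is admissible for $\la\in\Lnua$, so $\Res P(\la)$ is admissible projective in $\mc O_\oa$; by the parabolic/Levi version of the BGG reciprocity for the properly stratified category $\mc O_\oa^{\vpre}$ (the classical statement of \cite[Section 2]{MaSt04}, or \cite[Theorem 2]{KoM}), $\Res P(\la)$ has a $\Delta_0$-flag starting with $\Delta_0(\la)$ at the top and $\Delta_0(\mu)$'s with $\mu>\la$ below. I want to lift this to $\g$. The key point is that $P(\la)$ surjects onto $\Delta(\la)$: indeed $\Hom_\g(P(\la),\Delta(\la))\neq 0$ since $L(\la)$ is the top of $\Delta(\la)=\Ind_{\mf p}^\g P_{\mf l_\zeta}(\la)$, and I need to check the surjection. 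One clean route: $\Ind$ of the projective cover $P_\oa(\la)\onto\Delta_0(\la)$ gives $\Ind P_\oa(\la)\onto\Ind\Delta_0(\la)$, and by Lemma \ref{lem::indDelta} the target lies in $\mc F(\Delta)$; since $P(\la)$ is a summand of $\Ind P_\oa(\la)$ and (by a highest-weight / $H$-eigenvalue argument, using the grading operator $H$ of \eqref{eq::paracomp} as in the proof of Proposition \ref{prop::FDsummand}) $\Delta(\la)$ is the summand of $\Ind\Delta_0(\la)$ carrying the top composition factor $L(\la)$, one gets $P(\la)\onto\Delta(\la)$. Then $\Ind P_\oa(\la)\in\mc F(\Delta)$ by Lemmas \ref{lem::indDelta}, and its summand $P(\la)$ lies in $\mc F(\Delta)$ by Proposition \ref{prop::FDsummand}; so $K(\la)=\ker(P(\la)\onto\Delta(\la))$ has a $\Delta$-flag. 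The ordering statement ($\Delta(\mu)$ with $\la<\mu$) follows by comparing $H$-eigenspaces (equivalently $\mf h$-central characters over $\mf l_\zeta$) and the analogous ordering downstairs in $\mc O_\oa$, which $\Res$ detects faithfully.

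For Parts (2) and (3), I would work in $\OI$ via $\pi$. Part (2): by definition $\ov\Delta(\la)=P(\la)/Tr(K_\la)$ maps onto $M(\la)$, and $\pi$ kills exactly the $L(\mu)$ with $\mu\notin\Lnua$, so $\pi\ov\Delta(\la)$ has $\pi L(\la)$ as its unique simple top (the top of $\ov\Delta(\la)$ is $L(\la)$ since $\ov\Delta(\la)$ is a quotient of $P(\la)$); by \eqref{eqn:aux13} the radical of $\ov\Delta(\la)$ has no composition factors $L(\gamma)$ with $\gamma\in\Lnua$ above the $M(\la)$-part — more precisely the only $\Lnua$-factors of $\ov\Delta(\la)$ are those of $M(\la)$, namely $L(\mu)$ with $\mu\le\la$, $\mu\in\Lnua$ — so after applying $\pi$ the kernel of $\pi\ov\Delta(\la)\onto\pi L(\la)$ is filtered by $\pi L(\mu)$, $\mu\in\Lnua$, $\mu<\la$; here I use BGG-type linkage for $M(\la)$ in $\mc O$ to get the strict inequality. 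Part (3): applying the exact functor $\pi$ to the $\Delta$-flag of $P(\la)$ from Part (1), and using that $\pi$ realizes $\Opres\simeq\OI$ with $\pi\Delta(\mu)$ the standard objects, reduces the claim to showing $\pi\Delta(\la)$ is filtered by copies of $\ov\Delta(\la)$; this I would get by examining $\Res^\g_{\mf l_\zeta}\Delta(\la)\cong U(\mf u^-)\otimes P_{\mf l_\zeta}(\la)$ (Lemma \ref{lem::9}), decomposing $U(\mf u^-)$ as an $\mf l_\zeta$-module into finite-dimensionals, and matching multiplicities of proper standard objects after $\pi$, together with the fact that $\pi\ov\Delta(\la)$ is the proper standard object of $\OI$ while $\pi\Delta(\la)$ is the standard one, whose $\ov\Delta$-multiplicities equal $[\text{(fin.~dim.~piece)}:\text{triv}]$ type numbers — all of which have exact counterparts downstairs via the faithful exact $\Res$.

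The main obstacle I anticipate is Part (1): concretely, proving that $P(\la)$ itself (not just $\Ind P_\oa(\la)$) surjects onto $\Delta(\la)$ and that the resulting kernel's $\Delta$-flag has the correct upward ordering. The surjectivity requires identifying, inside $\Ind\Delta_0(\la)$, which indecomposable summand is $\Delta(\la)$ — this is where the grading operator $H$ and a careful highest-weight-space bookkeeping (exactly the technique already deployed in the proof of Proposition \ref{prop::FDsummand}) is essential, and it is the one genuinely non-formal step; everything else is transport along $\Res$ (which is exact, faithful, and intertwines $\Ind$ with the Levi-level constructions) and along the equivalence $\pi$.
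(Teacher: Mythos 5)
Your treatment of Parts (1) and (2) follows the paper's route: for (1) the paper likewise passes through $\Res P(\la)\in\mc F(\Delta_0)$, $\Ind\Res P(\la)\in\mc F(\Delta)$ (Lemma \ref{lem::indDelta}), and closure of $\mc F(\Delta)$ under summands (Proposition \ref{prop::FDsummand}), deducing the ordering and the surjection onto $\Delta(\la)$ from BGG reciprocity in $\mc O$ (your $H$-eigenvalue bookkeeping is a legitimate, more explicit substitute for that last citation); for (2) the paper simply observes that exactness of $\pi$ and \eqref{eqn:aux13} give $\pi\ov\Delta(\la)=\pi(M(\la))$, which is exactly your argument.

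Part (3), however, has a genuine gap. The claim is the existence of an actual filtration of $\pi(\Delta(\la))$ with subquotients $\ov\Delta(\la)$, and your proposed method --- decomposing $U(\mf u^-)$ into finite-dimensional $\mf l_\zeta$-modules and ``matching multiplicities of proper standard objects after $\pi$'' --- does not produce one: a multiplicity count in the Grothendieck group cannot establish a filtration, and the sentence asserting that $\pi\Delta(\la)$ is ``the standard object whose $\ov\Delta$-multiplicities equal\dots'' presupposes the $\ov\Delta$-filtration you are trying to construct. Moreover, the decomposition of $U(\mf u^-)$ is the wrong slicing here: it is the tool used in Lemma \ref{lem::indDelta} to exhibit the $\Delta$-flag of an induced module, not to exhibit the $\ov\Delta$-flag of a single $\Delta(\la)$. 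The correct (and the paper's) mechanism is to filter the \emph{other} tensor factor: $P_{\mf l_\zeta}(\la)$ has an $\mf l_\zeta$-Verma flag with subquotients $M_{\mf l_\zeta}(w\cdot\la)$, $w\in W_\zeta$; applying the exact functor $\Ind_{\mf p}^{\g}$ gives a filtration of $\Delta(\la)$ with subquotients $M(w\cdot\la)$; and since $\pi\bigl(\Ind_{\mf p}^{\g}L_{\mf l_\zeta}(\gamma)\bigr)=0$ for integral $\gamma\notin\Lnua$, each quotient $M(w\cdot\la)/M(\la)$ is killed by $\pi$, so $\pi(M(w\cdot\la))\cong\pi(M(\la))=\pi\ov\Delta(\la)$ by Part (2). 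This yields the desired filtration directly; you should replace the multiplicity-matching step with this argument.
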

\begin{proof}
	Since $\Res P(\la)$ is admissible by Lemma \ref{lem:admissible}, it follows that $\Res P(\la)\in \mc F(\Delta_0)$ by \cite[Lemma 4]{KoM} and \cite[Propositions 2.9 and 2.13]{MaSt04}. Alternatively, this also follows from the fact that $\Res P(\la)$ is a direct summand of a sum of translations of a module of the form $\Ind_{\mf p_\oa}^{\mf g_\oa}P_{\mf l_\zeta}(\gamma)$ with $\gamma\in \Lnua$; see also the proof of \cite[Lemma 4]{KoM}.	 By Lemma \ref{lem::indDelta}, we have $\Ind \Res P(\la)\in \mc F(\Delta)$. Since $P(\la)$ is a direct summand of $\Ind \Res P(\la)$, Part (1) follows from Proposition \ref{prop::FDsummand} and the BGG reciprocity for $\mc O$.
	
The exactness of $\pi$ and \eqref{eqn:aux13} imply that $\pi{\ov\Delta}(\la) =\pi(M(\la))$, and hence Part (2) follows.

	Finally, we prove Part (3). We note that, for a given $\gamma\in \Lambda$ with $\gamma\notin \Lnua$, we have  $\pi(\Ind_{\mf p}^{\g}L_{\mf l_\zeta}(\gamma))=0$. The conclusion of Part (3) follows. This completes the proof.
\end{proof}

Set $Q(\la)$ to be the quotient of $P(\la)$ modulo the sum of the images of all homomorphisms $P(\mu)\rightarrow P(\la)$, where $\mu \in \Lnua$ and $\mu>\la$.  Observe that $Q(\la)\in \mc O^{\vpre}.$

\begin{cor} \label{coro::standardob} For any $\la \in \Lnua$, we have  $\Delta(\la)\cong Q(\la)$.
\end{cor}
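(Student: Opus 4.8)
The plan is to identify $\Delta(\la)$ with $Q(\la)$ by showing that $Q(\la)$ is, on the one hand, a quotient of $\Delta(\la)$, and, on the other hand, that $\Delta(\la)$ is itself the universal quotient of $P(\la)$ killing all the "higher" projectives $P(\mu)$ with $\mu>\la$, $\mu\in\Lnua$. First I would recall from Theorem \ref{thm::str}(1) the exact sequence $0\to K(\la)\to P(\la)\to\Delta(\la)\to 0$, where $K(\la)$ has a $\Delta$-flag with sections $\Delta(\mu)$, $\la<\mu$ (all $\mu\in\Lnua$). Each such $\Delta(\mu)=\Ind_{\mf p}^\g P_{\mf l_\zeta}(\mu)$ is a quotient of $P(\mu)$, since $P_{\mf l_\zeta}(\mu)$ is generated in degree $\mu$ and $P(\mu)\tto M(\mu)\hookrightarrow\Delta(\mu)$'s top argument gives $\Hom_\g(P(\mu),\Delta(\mu))\neq 0$ with the image generating $\Delta(\mu)$; composing with the projection of a $\Delta$-flag step, one sees every section of $K(\la)$, hence $K(\la)$ itself (by an easy induction up the flag, lifting generators through projectivity of the $P(\mu)$'s), lies in the sum of images of maps $P(\mu)\to P(\la)$ with $\mu>\la$, $\mu\in\Lnua$. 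Therefore the canonical surjection $P(\la)\tto Q(\la)$ factors through $P(\la)\tto\Delta(\la)$, giving a surjection $q:\Delta(\la)\tto Q(\la)$.

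For the reverse direction I would show $q$ is an isomorphism by comparing $\Delta$-flag multiplicities, or more directly by showing $\Hom_\g(P(\mu),\Delta(\la))=0$ for all $\mu\in\Lnua$ with $\mu>\la$, which forces the defining relations of $Q(\la)$ to already hold in $\Delta(\la)$, so that the surjection $P(\la)\tto\Delta(\la)$ factors through $Q(\la)$, producing the inverse of $q$. To see this Hom-vanishing: a nonzero map $P(\mu)\to\Delta(\la)$ would put $L(\mu)$ in the top of $\Delta(\la)$; but $\Delta(\la)=\Ind_{\mf p}^\g P_{\mf l_\zeta}(\la)$ has top $\Ind_{\mf p}^\g(\text{top of }P_{\mf l_\zeta}(\la))$, and the top of $P_{\mf l_\zeta}(\la)$ is $L_{\mf l_\zeta}(\la)$, so the top of $\Delta(\la)$ is $L(\la)$ (using that $\Ind_{\mf p}^\g$ of a simple $\mf l_\zeta$-module with trivial $\mf u^+$-action has simple top $L$ of the same highest weight). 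Hence $\mu=\la$, contradicting $\mu>\la$. Combined with $Q(\la)\in\Opres$ and $\Delta(\la)\in\Opres$ (Proposition \ref{lem::CharaOpres}), the two mutually inverse surjections give $\Delta(\la)\cong Q(\la)$.

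An alternative, perhaps cleaner route which I would also keep in mind: apply the Serre quotient functor $\pi$. By Theorem \ref{thm::str}(1), $\pi$ kills $K(\la)$ precisely when every section $\Delta(\mu)$, $\mu>\la$, of its flag maps to $0$; but $\pi$ does \emph{not} kill $\Delta(\mu)$ in general, so this does not literally apply — instead one works inside $\Opres\cong\OI$ where $P(\la)$ has projective cover image $\pi P(\la)$ and the "trace" construction defining $Q(\la)$ becomes intrinsic; then $Q(\la)$ is by construction the largest quotient of $P(\la)$ admitting no $L(\mu)$, $\mu>\la$, in higher layers, which matches the description of $\Delta(\la)$ via its standardization property. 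I would use whichever of these is shortest given the lemmas already established.

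The main obstacle I anticipate is the careful bookkeeping in the first paragraph: showing that the entire submodule $K(\la)$ — not merely its flag sections — is contained in $\sum_{\mu>\la,\,\mu\in\Lnua}\operatorname{Im}(P(\mu)\to P(\la))$. This requires lifting the generators of successive flag quotients $\Delta(\mu)$ through the projective covers $P(\mu)$ and then through $P(\la)$ using projectivity, and checking the lifts assemble correctly up the filtration; the weight/eigenvalue grading by the operator $H$ from \eqref{eq::paracomp}, together with Lemma \ref{lem::8}, should make this induction go through cleanly, since $\Delta(\mu)$ sits in strictly lower $H$-degree than $\Delta(\la)$ when $\mu>\la$ is not an issue — rather one orders the flag so generators appear in a compatible way — but the verification that no "new" generators outside $\Lnua$ are needed is exactly where Proposition \ref{lem::CharaOpres} and Lemma \ref{lem:admissible} are invoked.
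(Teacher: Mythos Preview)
Your overall two-step strategy---first produce a surjection $\Delta(\la)\twoheadrightarrow Q(\la)$ by showing $K(\la)$ lies in the trace of the higher admissible projectives, then show $\Hom_\g(P(\mu),\Delta(\la))=0$ for $\mu\in\Lnua$ with $\mu>\la$ to obtain the reverse factorization---is exactly the paper's approach. The first step is fine and your ``induction up the flag'' is the right idea (the paper simply observes that each section $\Delta(\mu)$, $\mu>\la$, is a quotient of the admissible projective $P(\mu)$, so $K(\la)$ is generated by images of such $P(\mu)$).

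There is, however, a genuine gap in your second step. You write that a nonzero map $P(\mu)\to\Delta(\la)$ ``would put $L(\mu)$ in the top of $\Delta(\la)$''. This is not correct: since $P(\mu)$ is projective, a nonzero map $P(\mu)\to\Delta(\la)$ only tells you that $L(\mu)$ occurs as a \emph{composition factor} of $\Delta(\la)$, i.e.\ $\dim\Hom_\g(P(\mu),\Delta(\la))=[\Delta(\la):L(\mu)]$. The image of such a map is a submodule whose top is $L(\mu)$, but that submodule need not be all of $\Delta(\la)$, so $L(\mu)$ need not sit in $\mathrm{top}\,\Delta(\la)$. Knowing $\mathrm{top}\,\Delta(\la)=L(\la)$ is therefore not enough. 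What you actually need is $[\Delta(\la):L(\mu)]=0$ for $\mu\in\Lnua$ with $\mu>\la$, and this is precisely what Theorem~\ref{thm::str}(2),(3) gives you: $\pi(\Delta(\la))$ is filtered by copies of $\pi(\ov\Delta(\la))$, whose composition factors $\pi L(\nu)$ all satisfy $\nu\le\la$, so no $L(\mu)$ with $\mu\in\Lnua$, $\mu>\la$ can occur in $\Delta(\la)$. This is exactly how the paper closes the argument. Once you replace the ``top'' claim by this composition-factor vanishing, your proof goes through and coincides with the paper's.
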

\begin{proof}
 Let $K(\la)\in \mc F(\Delta)$ be the module given in Theorem \ref{thm::str}(1). Then $K(\la)$ has a $\Delta$-flag, subquotients of which are $\Delta(\mu)$ with $\mu\in \Lnua$ and $\mu>\la$. Therefore, $K(\la)$ is the image of a homomorphism from an admissible projective module, and hence  we have an epimorphism $\Delta(\la) \twoheadrightarrow Q(\la)$. Now, to complete the proof, we note that, for any $\mu\in \Lnua$ with $\mu>\la$, $\Hom_{\mc O}(P(\mu), \Delta(\la))=[\Delta(\la):L(\mu)]=0$ by Theorem \ref{thm::str}.
\end{proof}

For any $M\in \mc F(\Delta)$, we denote by $(M : \Delta(\la))$ the  multiplicity of a standard object $\Delta(\la)$ in a $\Delta$-flag of $M$. The following corollary determines the multiplicities of   standard objects of the projective cover in $\mc O^{\vpre}.$

\begin{cor}[BGG reciprocity]\label{cor:BGG}
  For any $\la,\mu\in \Lnua$, we have
  \begin{align*}
  &( P(\la):\Delta(\mu)) = [\ov \Delta(\mu): L(\la)] = [M(\mu):L(\la)].
  \end{align*}
\end{cor}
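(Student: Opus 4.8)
The plan is to deduce the BGG reciprocity from the stratified structure established in Theorem \ref{thm::str} together with the standard ``double-counting'' argument familiar from highest weight categories. The first equality $(P(\la):\Delta(\mu)) = [\ov\Delta(\mu):L(\la)]$ should follow from a homological pairing between standard and proper standard (or costandard) objects, while the second equality $[\ov\Delta(\mu):L(\la)] = [M(\mu):L(\la)]$ is essentially immediate from \eqref{eqn:aux13}, which says that $\ov\Delta(\mu)/M(\mu)$ has no composition factors $L(\gamma)$ with $\gamma\in\Lnua$.

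First I would dispose of the easy half. By \eqref{eqn:aux13}, for $\la\in\Lnua$ the composition multiplicity $[\ov\Delta(\mu):L(\la)]$ can only come from the submodule $M(\mu)\subseteq\ov\Delta(\mu)$, so $[\ov\Delta(\mu):L(\la)] = [M(\mu):L(\la)]$; note that $\ov\Delta(\mu)$ has finite length since it is a quotient of $P(\mu)$, so these multiplicities are well-defined.

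For the first equality, I would work in the Serre quotient category $\OI$, where $\Opres \simeq \OI$ carries an honest properly stratified (in fact quasi-hereditary-like) structure with standard objects $\pi\Delta(\la)$, proper standard objects $\pi\ov\Delta(\la)$, simples $\pi L(\la)$ and projective covers $\pi P(\la)$, for $\la\in\Lnua$. By Theorem \ref{thm::str}(1), $\pi P(\la)$ has a $\pi\Delta$-flag, and applying $\pi$ to \eqref{eq::413} shows the top section is $\pi\Delta(\la)$ with the remaining sections $\pi\Delta(\mu)$ for $\mu>\la$. The multiplicity $(P(\la):\Delta(\mu)) = (\pi P(\la):\pi\Delta(\mu))$ is then computed by the usual argument: one needs a costandard-type object $\ov\nabla(\mu)$ (a proper costandard object in $\OI$, dual to $\pi\ov\Delta(\mu)$ under the contravariant duality on $\OI$ coming from the one on $\mc O_\Z$) satisfying the orthogonality relation $\dim\Hom_{\OI}(\pi\Delta(\la),\ov\nabla(\mu)) = \delta_{\la\mu}$ and $\Ext^1_{\OI}(\pi\Delta(\la),\ov\nabla(\mu)) = 0$. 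Granting this, a flag-induction argument gives $(\pi P(\la):\pi\Delta(\mu)) = \dim\Hom_{\OI}(\pi P(\la),\ov\nabla(\mu))$, and since $\pi P(\la)$ is the projective cover of $\pi L(\la)$, the right-hand side equals $[\ov\nabla(\mu):\pi L(\la)] = [\pi\ov\Delta(\mu):\pi L(\la)]$ (using the duality), which in turn equals $[\ov\Delta(\mu):L(\la)]$ because $\pi$ is exact and kills exactly the $L(\gamma)$ with $\gamma\notin\Lnua$ — and by \eqref{eqn:aux13} the relevant composition factors of $\ov\Delta(\mu)$ already lie in $\Lnua$.

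The main obstacle I anticipate is setting up the homological orthogonality cleanly: one must exhibit the proper costandard objects $\ov\nabla(\mu)$ in $\OI$ and verify the $\Hom$/$\Ext^1$-vanishing, either by transporting the known costandard objects and duality from $\mc O_\Z$ through $\pi$, or by invoking the general formalism of (properly) stratified categories from \cite{Dl, MaSt04}. Once the category $\Opres\simeq\OI$ is known to be properly stratified with the standard and proper standard objects as described in Theorem \ref{thm::str} (together with Proposition \ref{prop::FDsummand} guaranteeing $\mc F(\Delta)$ is summand-closed, so that $\Delta$-flag multiplicities are well-defined), the BGG reciprocity is a formal consequence, and I would cite \cite[Section 2.6]{MaSt04} or the analogous statement in \cite[Section 5]{CCM2} for the reciprocity in properly stratified categories rather than re-deriving it.
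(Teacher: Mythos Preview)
Your approach is different from the paper's and, as written, has a gap in the generality claimed. The paper does not construct proper costandard objects or invoke the abstract reciprocity formalism of properly stratified categories. Instead, it reduces directly to the ordinary BGG reciprocity in $\mc O$: since $P(\la)$ has a $\Delta$-flag (Theorem \ref{thm::str}(1)) and each $\Delta(\mu')=\Ind_{\mf p}^\g P_{\mf l_\zeta}(\mu')$ refines to a Verma flag supported on the single orbit $W_\zeta\cdot\mu'$, the multiplicity $(P(\la):\Delta(\mu))$ can be read off from the Verma flag multiplicities $(P(\la):M(\nu))$ for $\nu\in W_\zeta\cdot\mu$. The paper first shows, via $\alpha$-freeness of $L(\la)$, that $[M(w\cdot\mu):L(\la)]=[M(\mu):L(\la)]$ for all $w\in W_\zeta$; hence $(P(\la):M(w\cdot\mu))=(P(\la):M(\mu))$ by BGG reciprocity in $\mc O$, and a short count then yields the first equality. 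Your treatment of the second equality via \eqref{eqn:aux13} is correct and agrees with the paper.

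The gap in your argument is the reliance on a simple-preserving contravariant duality on $\mc O_\Z$ to manufacture the proper costandard objects $\ov\nabla(\mu)$. Corollary \ref{cor:BGG} appears in Section 4.3 and is asserted for arbitrary quasi-reductive $\g$, in particular for $\mf p(n)$; the duality $D$ is only introduced in Section 4.4 under the explicit hypothesis that $\g$ is basic, i.e., $\g\neq\mf p(n)$. For $\mf p(n)$ no simple-preserving duality on $\mc O$ exists, so you cannot define $\ov\nabla(\mu)$ as the dual of $\ov\Delta(\mu)$, and the $\Hom$/$\Ext^1$ orthogonality argument you sketch has no objects to work with. The references you propose for abstract BGG reciprocity in properly stratified categories likewise presuppose (proper) costandard modules, which you would then have to construct by other means. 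The paper's reduction to BGG reciprocity in $\mc O$ sidesteps this entirely, is shorter, and applies uniformly.
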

 \begin{proof}
 	Let $\gamma\in \h^\ast$ be an integral weight with $\gamma\not\in \Lnua$. Then $\Ind_{\mf p}^{\g} L_{\mf l_\zeta}(\gamma)$ is $\alpha$-finite for some $\alpha\in \Pi_\zeta$, which implies that $[\Ind_{\mf p}^{\g} L_{\mf l_\zeta}(\gamma): L(\la)]=0$ since $L(\la)$ is $\alpha$-free; see also Section  \ref{Sect::Opres}. Therefore, for any $w\in W_\zeta$ we have $[M(w\cdot\mu)/M(\mu):L(\la)]=0$ and hence
 	\begin{align}\label{eqn:aux12}
 		& [M(w\cdot \mu): L(\la)] = [M(\mu):L(\la)].
 	\end{align}  By BGG reciprocity for category $\mc O$, we have $(P(\la): M(w\cdot \mu))=(P(\la):M(\mu))$. Finally, noting that the multiplicity of $\overline \Delta(\mu)$ in a $\ov \Delta$-flag of $\Delta(\mu)$ is $|W_\zeta\cdot \mu|$, the conclusion follows.
 \end{proof}

\subsection{Tilting modules} 
In this subsection, we assume that $\g$ is a basic Lie superalgebra, i.e., $\g$ is one from \eqref{eq::Kaclist} with $\g\neq \mf p(n)$. There is a simple-preserving duality $D$ on $\mc O$, i.e., $D$ is a contravariant auto-equivalence  such that $D^2\cong \Id$ and $D(L(\la))\cong L(\la)$, for each $\la \in \mf h^\ast$. 

For $\la\in \Lnua$, we define the {\em co-standard} object $\nabla(\la)$ for $\mc O^{\vpre}$ by
\begin{align}
	&\nabla(\la):= D \Ind_{\mf p}^{\g} P_{\mf l_\zeta}(\la).
\end{align}
It follows from Proposition \ref{lem::CharaOpres} that $\nabla(\la)\in \Opres$; see also \cite[Corollary 2.11]{MaSt04}. Similarly, we define co-standard object $\nabla_0(\la)$  for $\Opres_\oa$.  If $\mc O^{\vpre}=\mc O$, then we arrive at the usual definition of dual Verma modules.

 A module $M\in \Opres$ is referred to as a {\em tilting module} if $M$ has both a $\Delta$-flag and a $\nabla$-flag. In the case that $\mc O^{\vpre}=\mc O$, for each $\la\in \mf h^\ast$ there is a unique indecomposable tilting module of highest weight $\la$, which we denote by $T^{\mc O}(\la)$, namely, $T^{\mc O}(\la)$ has both a Verma flag and a dual Verma flag; see, e.g., \cite{Br04}. We also denote by $T^{\mc O_\oa}(\la)$ the indecomposable tilting module of highest weight $\la$ in $\mc O_\oa$.

In the case that $\mf g=\mf g_\oa$, the classification of indecomposable  tilting modules in $\Opres_\oa$ is given in \cite[Section 6]{FKM00}; see also \cite[Section 5.4]{MaSt04}. In particular, the list  $\{T^{\mc O_\oa}(w_0^\zeta\cdot \la)|~\la\in \Lnua\}$ forms a complete set of non-isomorphic indecomposable tilting modules  in $\Opres_\oa$, and each $T_0(\la):= T^{\mc O_\oa}(w_0^\zeta\cdot \la)$ is the unique indecomposable tilting module containing $\Delta_0(\la)$  such that $T_0(\la)/\Delta_0(\la)$ has a $\Delta_0$-flag. Here we recall that $w_0^\zeta$ denotes the longest element in $W_\zeta$.  We set
	\begin{align}
		&T(\la):=T^{\mc O}(w_0^\zeta\cdot\la),
	\end{align} for each $\la\in \Lnua$. Below we gives a classification of indecomposable tilting objects in $\mc O^{\vpre}$. We remark that the type I case was considered in \cite[Section 6.1]{CCM2}.

\begin{prop}\label{prop:tilting}
	The modules $\{T(\la)|~\la\in \Lnua\}$ constitute
an exhaustive list of indecomposable tilting modules in $\Opres$.  In particular, each $T(\la)$ is the unique indecomposable tilting module in $\mc O^{\vpre}$ containing $\Delta(\la)$ such that $T(\la)/\Delta(\la)$ has a $\Delta$-flag.
\end{prop}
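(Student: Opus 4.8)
The plan is to transport the corresponding classification over $\frakg_\oa$ (recalled just before the statement) through the induction/restriction adjunction, using the stratified structure of $\Opres$ established in Theorem~\ref{thm::str} and the fact that $\mc F(\Delta)$ is closed under summands (Proposition~\ref{prop::FDsummand}). First I would check that each $T(\la)=T^{\mc O}(w_0^\zeta\cdot\la)$ actually lies in $\Opres$ and is a tilting object there: it has a Verma flag and a dual Verma flag in $\mc O$; one shows that in fact these refine to a $\Delta$-flag and a $\nabla$-flag. The cleanest route is via restriction: by Lemma~\ref{lem::9}(2), $\Res T(\la)$ has a $\Delta_0$-flag, and dually a $\nabla_0$-flag, so $\Res T(\la)$ is tilting in $\Opres_\oa$; since $w_0^\zeta\cdot\la$ is $W_\zeta$-dominant in the relevant sense, $\Res T(\la)$ decomposes into a sum of the $T_0(\gamma)$'s with $\gamma$ in the $W$-orbit data, all of which lie in $\Opres_\oa$, whence $T(\la)\in\Opres$ by Proposition~\ref{lem::CharaOpres}. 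Then, arguing as in Proposition~\ref{prop::FDsummand} (decompose $\Res_{\frakl_\zeta}^\frakg T(\la)$ into $H$-eigenspaces and run the induction on the length of a $\Delta$-flag), one upgrades the $\Delta_0$-flag of $\Res T(\la)$ to a $\Delta$-flag of $T(\la)$ in $\mc O$ — which, by Theorem~\ref{thm::str} and the characterization of $\Opres$, is a $\Delta$-flag in $\Opres$. The dual argument, using the simple-preserving duality $D$ together with $\nabla(\la)=D\,\Delta(\la)$ (up to the twist identifying the two parabolic inductions), gives the $\nabla$-flag. Hence each $T(\la)$ is a tilting object in $\Opres$.

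Next I would establish indecomposability and the uniqueness property. Indecomposability of $T(\la)$ is inherited from $\mc O$: it is $T^{\mc O}(w_0^\zeta\cdot\la)$, already known to be indecomposable. For the characterization, one checks that the highest standard object appearing in a $\Delta$-flag of $T(\la)$ is $\Delta(\la)$ with multiplicity one: on the level of $\frakg_\oa$ this is exactly the defining property of $T_0(\la)$ recalled above, namely $T_0(\la)$ contains $\Delta_0(\la)$ with $T_0(\la)/\Delta_0(\la)$ having a $\Delta_0$-flag by higher terms, and this passes up through induction/restriction since $\Res$ intertwines $\Delta\leftrightarrow\Delta_0$ and $\Ind\Delta_0\leftrightarrow\Delta$ (Lemma~\ref{lem::indDelta}). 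A standard Ext-vanishing argument — $\Ext^1_{\Opres}(\Delta(\mu),\nabla(\nu))=0$ for all $\mu,\nu$, which follows from the properly stratified structure, cf.\ \cite{MaSt04} — then shows that a tilting object is determined up to isomorphism by its $\Delta$-flag multiplicities, and that any indecomposable tilting module with $\Delta(\la)$ as its ``top'' standard subobject and higher $\Delta$'s above it must be isomorphic to $T(\la)$.

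Finally, exhaustiveness: given any indecomposable tilting $M\in\Opres$, pick a maximal weight $\la$ with $(M:\Delta(\la))\neq 0$; then $M$ has a $\Delta$-flag starting with $\Delta(\la)$ at the bottom, and by the Ext-vanishing/uniqueness just established, $T(\la)$ is a direct summand of $M$, hence $M\cong T(\la)$ by indecomposability. Since $\la$ ranges over $\Lnua$ and distinct $\la$ give non-isomorphic $T(\la)$ (they have distinct $\Delta$-flag supports), $\{T(\la)\mid\la\in\Lnua\}$ is the complete list.

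I expect the main obstacle to be the first paragraph: promoting the Verma/dual-Verma flag of the $\mc O$-tilting module $T^{\mc O}(w_0^\zeta\cdot\la)$ to an honest $\Delta$-/$\nabla$-flag \emph{inside} $\Opres$, i.e.\ controlling how the $\Delta$-subquotients of $\Res T(\la)$ in $\Opres_\oa$ reassemble after induction. This is where one must invoke the $H$-eigenspace decomposition and rerun the bookkeeping of Proposition~\ref{prop::FDsummand}; everything after that (indecomposability, uniqueness, exhaustiveness) is formal once the properly stratified structure and the appropriate $\Ext^1$-vanishing are in hand.
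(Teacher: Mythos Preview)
Your overall architecture is sound, and the second and third paragraphs (indecomposability, uniqueness via $\Ext^1$-vanishing, exhaustiveness) are essentially what the paper does. The gap is in the first paragraph, exactly where you flag it---and your proposed fix does not close it.

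The invocation of Lemma~\ref{lem::9}(2) is circular: that lemma has the hypothesis $M\in\mc F(\Delta)$, which is precisely what you are trying to establish for $T(\la)$. You cannot conclude $\Res T(\la)\in\mc F(\Delta_0)$ this way. You might hope instead to argue directly that $\Res T^{\mc O}(w_0^\zeta\cdot\la)$, being tilting in $\mc O_\oa$, decomposes as a sum of $T^{\mc O_\oa}(\gamma)$'s each lying in $\Opres_\oa$; but knowing that every such $\gamma$ is $W_\zeta$-dominant is equivalent (via Proposition~\ref{lem::CharaOpres} and the known classification over $\g_\oa$) to $T(\la)\in\Opres$, so you are circular again. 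Moreover, the subsequent step---``arguing as in Proposition~\ref{prop::FDsummand} one upgrades the $\Delta_0$-flag of $\Res T(\la)$ to a $\Delta$-flag of $T(\la)$''---is not what that proposition does; it shows $\mc F(\Delta)$ is closed under summands, not that $\Res M\in\mc F(\Delta_0)$ forces $M\in\mc F(\Delta)$.

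The paper avoids all of this by going through \emph{induction} rather than restriction. Set $\mu:=\la-2\rho_1$ (with $\rho_1$ the odd half-sum). Then $\Ind T_0(\mu)$ is tilting in $\mc O$, lies in $\Opres$ (apply $\Ind$ to an admissible presentation of $T_0(\mu)$ and use Lemma~\ref{lem:admissible}), and has a $\Delta$-flag by Lemma~\ref{lem::indDelta}. A direct computation with the $\mf l_\zeta$-Verma flag of $P_{\mf l_\zeta}(\mu)$ shows its highest Verma term is $M(w_0^\zeta\cdot\la)$, so the indecomposable $\mc O$-tilting summand containing that term is $T^{\mc O}(w_0^\zeta\cdot\la)=T(\la)$. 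Now Proposition~\ref{prop::FDsummand} gives $T(\la)\in\mc F(\Delta)$, membership in $\Opres$ is inherited from the summand, and self-duality of $T(\la)$ supplies the $\nabla$-flag for free. This is the missing maneuver; once you have it, your remaining paragraphs go through.
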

\begin{proof}
Let $\la\in\Lambda(\zeta)$ and set $\mu=\la-2\rho_1\in \Lambda(\zeta)$, where $\rho_1$ denotes the half-sum of roots in $\mf n^+_\ob$.  We have
\begin{equation*}
\Ind\Delta_0(\mu)=\Ind_{\g_\oa}^{\g}\Ind_{\mf p_\oa}^{\g_\oa} P_{\mf l_\zeta}(\mu) \cong
\Ind_{\mf p}^{\g}\Ind_{\mf p_\oa}^{\mf p} P_{\mf l_\zeta}(\mu) \cong
\Ind_{\mf p}^{\g}\left(U(\mf n^+_\ob)\otimes P_{\mf l_\zeta}(\mu)\right).
\end{equation*}
Now, $P_{\mf l_\zeta}(\mu)$ has an ${\mf l_\zeta}$-Verma flag with highest term $M_{\mf l_\zeta}(w_0^\zeta\cdot\mu)$, and hence $U(\mf n^+_\ob)\otimes P_{\mf l_\zeta}(\mu)$ has a ${\mf l_\zeta}$-Verma flag  with highest term $M_{\mf l_\zeta}(w_0^\zeta\cdot\mu+2\rho_{1})=M_{\mf l_\zeta}(w_0^\zeta\cdot\left(\mu+2\rho_{1}\right))=M_{\mf l_\zeta}(w_0^\zeta\cdot\la)$. Therefore, $\Ind\Delta_0(\mu)$ has a Verma flag with highest term $M(w_0^\zeta\cdot\la)$. This implies that $\Ind T_0(\mu)$ has a Verma flag with highest term $M(w_0^\zeta\cdot\la)$, and hence by Lemma \ref{lem::indDelta}, a $\Delta$-flag with $\Delta(\la)$ as a highest term.  Therefore,  the indecomposable direct summand of $\Ind T_0(\mu)$ containing $\Delta(\la)$ is (isomorphic to) $T(\la)$. By Proposition \ref{prop::FDsummand}, $T(\la)$ has a $\Delta$-flag. Also, by Lemma \ref{lem:admissible}, $\Ind T_0(\mu)\in\Opres$ and therefore, $T(\la)$ lies in $\Opres$. Since $T(\la)$ is self-dual, it follows that $T(\la)$ has both a $\Delta$-flag and a $\nabla$-flag. Therefore, $T(\la)$ is an indecomposable tilting module in $\Opres.$

Now, if $T$ is an indecomposable tilting module in $\Opres$ with a $\Delta$-flag starting at $\Delta(\la)$, then by uniqueness of tilting module (see for example \cite[Proposition 5.6]{So98}), we have $T\cong T(\la)$. This completes the proof.
\end{proof}

\begin{cor}\label{cor:ringel:dual}(Ringel duality)
For $\la,\mu\in\Lnua$ we have
\begin{align}\label{eqn:aux11}
(T(\la):\Delta(\mu)) = [\ov{\Delta}(-w_0^\zeta\cdot\mu): L(-w_0^\zeta\cdot\la)] = (P(-w_0^\zeta\cdot\la):\Delta(-w_0^\zeta\cdot\mu)).
\end{align}
\end{cor}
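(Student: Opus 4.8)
The plan is to compare the three quantities in \eqref{eqn:aux11} in two steps. The equality of the second and third terms is nothing but the BGG reciprocity of Corollary~\ref{cor:BGG}, applied with $-w_0^\zeta\cdot\mu$ and $-w_0^\zeta\cdot\la$ in place of $\mu$ and $\la$. The only point to verify here is that $-w_0^\zeta$ stabilizes $\Lnua$; this holds because $-w_0^\zeta$ acts on the root data of $\mf l_\zeta$ as the opposition involution, hence permutes $\Pi_\zeta$ and preserves $\alpha$-anti-dominance for each $\alpha\in\Pi_\zeta$. Thus the content of the corollary is the first equality, which, after one more application of Corollary~\ref{cor:BGG}, is equivalent to
\[
(T(\la):\Delta(\mu))\;=\;[M(-w_0^\zeta\cdot\mu):L(-w_0^\zeta\cdot\la)].
\]

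First I would transfer the left-hand side from $\Opres$ to the full category $\mc O$. Since $\plz(\mu)$ has an $\mf l_\zeta$-Verma flag and $\Ind_{\mf p}^{\g}M_{\mf l_\zeta}(\nu)\cong M(\nu)$, the standard object $\Delta(\mu)=\Ind_{\mf p}^{\g}\plz(\mu)$ carries a $\g$-Verma flag whose subquotients are the $M(w\cdot\mu)$, $w\in W_\zeta$; among these exactly one, namely $M(\mu)$, has highest weight in $\Lnua$, and it occurs with multiplicity $(\plz(\mu):M_{\mf l_\zeta}(\mu))=[M_{\mf l_\zeta}(\mu):L_{\mf l_\zeta}(\mu)]=1$. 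Consequently, for $\mu,\xi\in\Lnua$, the multiplicity of $M(\mu)$ in a Verma flag of $\Delta(\xi)$ equals $\delta_{\xi\mu}$. By Proposition~\ref{prop:tilting} the module $T(\la)=T^{\mc O}(w_0^\zeta\cdot\la)$ lies in $\mc F(\Delta)$ and, as an object of $\mc O$, also admits a Verma flag; refining the $\Delta$-flag of $T(\la)$ to a Verma flag and comparing then yields
\[
(T(\la):\Delta(\mu))\;=\;\bigl(T^{\mc O}(w_0^\zeta\cdot\la):M(\mu)\bigr),
\]
the right-hand side being the multiplicity of $M(\mu)$ in a Verma flag of $T^{\mc O}(w_0^\zeta\cdot\la)$.

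It then remains to evaluate this multiplicity. For that I would invoke the Ringel self-duality of category $\mc O$ (see \cite{So98}), in the form $(T^{\mc O}(\nu):M(\xi))=[M(w_0\cdot\xi):L(w_0\cdot\nu)]$, obtaining $[M(w_0\cdot\mu):L(w_0w_0^\zeta\cdot\la)]$, and then rewrite this as $[M(-w_0^\zeta\cdot\mu):L(-w_0^\zeta\cdot\la)]$ via the standard Kazhdan--Lusztig symmetries (the inversion identity and the twist by the involution $-w_0$), using that $w_0w_0^\zeta$ and $-w_0^\zeta$ induce the same combinatorics on $W_\zeta\backslash W$. An alternative, arguably cleaner route that bypasses the detour through $\mc O$ is to reduce directly to the Lie algebra statement: by Proposition~\ref{prop:tilting} $T(\la)$ is an indecomposable direct summand of $\Ind T_0(\la-2\rho_1)$, and combining the exact adjoint pair $(\Ind,\Res)$ between $\Opres_\oa$ and $\Opres$ (Lemmas~\ref{lem:admissible},~\ref{lem::indDelta},~\ref{lem::9} and Proposition~\ref{prop::FDsummand}) with the $2\rho_1$-bookkeeping from the proof of Proposition~\ref{prop:tilting} reduces \eqref{eqn:aux11} to its counterpart for $\Opres_\oa$, which is the classical Ringel duality of \cite{FKM00, MaSt04}. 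In either approach the main obstacle is this last matching of labels: reconciling the weights produced by Ringel duality (or by the $\Ind/\Res$ transfer and the $2\rho_1$-shift) with the labels $-w_0^\zeta\cdot(-)$ in \eqref{eqn:aux11}, and in particular handling the singular case, where $\Delta(\mu)$ has a $\ov\Delta$-flag of length $|W_\zeta\cdot\mu|>1$ and identities of weights must be upgraded to identities of multiplicities.
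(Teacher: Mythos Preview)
Your overall structure is sound and matches the paper's closely: reduce to the equality $(T(\la):\Delta(\mu))=[M(-w_0^\zeta\cdot\mu):L(-w_0^\zeta\cdot\la)]$, pass from the $\Delta$-flag of $T(\la)=T^{\mc O}(w_0^\zeta\cdot\la)$ to its Verma flag, and then invoke Ringel--Soergel duality in $\mc O$. The identity $(T(\la):\Delta(\mu))=(T^{\mc O}(w_0^\zeta\cdot\la):M(\mu))$ that you extract is correct and useful.

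The genuine gap is the form of Ringel duality you invoke. Citing \cite{So98} and writing $(T^{\mc O}(\nu):M(\xi))=[M(w_0\cdot\xi):L(w_0\cdot\nu)]$ is the \emph{Lie algebra} version; for basic Lie superalgebras the correct statement, proved in \cite{Br04} (see also \cite[Theorem 3.7]{CCC}), reads $(T^{\mc O}(\nu):M(\xi))=[M(-\xi):L(-\nu)]$, with negation replacing $w_0\cdot$. These do not agree in the super setting, and your subsequent appeal to ``standard Kazhdan--Lusztig symmetries'' to turn $w_0\cdot$ into $-$ does not go through: there is no super analogue of the identity $-w_0=\text{id}$ on the relevant combinatorics, and the map $w_0w_0^\zeta$ does not induce the same parametrization as $-w_0^\zeta$. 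The alternative route via $\Ind/\Res$ and reduction to $\Opres_\oa$ that you sketch suffers from the same issue at the bookkeeping stage.

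Once you use the correct super form, the computation lands at $[M(-\mu):L(-w_0^\zeta\cdot\la)]$ rather than the desired $[M(-w_0^\zeta\cdot\mu):L(-w_0^\zeta\cdot\la)]$; note that $-\mu$ and $-w_0^\zeta\cdot\mu$ are generally \emph{not} in the same $W_\zeta$-dot-orbit, so \eqref{eqn:aux12} does not apply directly. The paper handles this by first proving, via \eqref{eqn:aux12}, BGG reciprocity in $\mc O$, and the super Ringel duality, that $(T^{\mc O}(w_0^\zeta\cdot\la):M(w\cdot\mu))$ is independent of $w\in W_\zeta$ (equivalently, that $P(\la')$ for $\la'\in\Lnua$ has its Verma multiplicities constant on $W_\zeta$-orbits, which follows from its $\Delta$-flag). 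One then picks the representative $w_0^\zeta\cdot\mu$ and applies the super Ringel duality to obtain the target directly. Inserting this orbit-constancy step and replacing the Lie-algebra Ringel duality by the super one fixes your argument and makes it coincide with the paper's.
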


\begin{proof}
Let $\la,\mu\in\Lnua$. By \eqref{eqn:aux12}, the BGG reciprocity and the Ringel-Soergel duality in $\mc O$ \cite{Br04} (see also \cite[Theorem 3.7]{CCC}), we have
\begin{align*}
(T^{\mc O}(w_0^\zeta\cdot\la):M(\mu))= (T^{\mc O}(w_0^\zeta\cdot\la):M(w\cdot\mu)),\quad\forall w\in W_\zeta.
\end{align*}
Thus, it follows from Proposition \ref{prop:tilting} that
\begin{align*}
(T(\la):\Delta(\mu))=&(T^{\mc O}(w_0^\zeta\cdot\la):M(w_0^\zeta\cdot\mu))\\
=&[M(-w_0^\zeta\cdot\mu):L(-w_0^\zeta\cdot\la)] = [\ov{\Delta}(-w_0^\zeta\cdot\mu): L(-w_0^\zeta\cdot\la)].
\end{align*}
The second identity in \eqref{eqn:aux11} follows from Corollary \ref{cor:BGG}.
\end{proof}

\section{$\imath$-Canonical Bases on Symmetrized Fock Space} \label{Sect::QSPcan}

In this section we introduce a quasi-split quantum symmetric pair $(U,U^\imath)$ of type $AIII$ and study a $q$-symmetrized Fock space which carries an action of the $\imath$-quantum group $U^\imath$. We construct $\imath$-canonical and dual $\imath$-canonical bases on this space. In Section \ref{Sect::thm:match:can} we shall relate these bases in a natural way to tilting and simple objects, respectively, in the category $\mc W(\zeta)$ in the case when $\frakg$ is an ortho-symplectic Lie superalgebra.

\subsection{Quantum symmetric pair $(U,U^\imath)$}\label{sec:QSP:def}

Let $U$ denote the infinite-rank quantum group of type $A$ with Chevalley generators $\{E_{\alpha_i},F_{\alpha_i},K^{\pm1}_{\alpha_i}|i\in\Z\}$, i.e., $U$ is the associative algebra over $\bbQ(q)$ generated by these generators subject to certain well-known relations (see, e.g., \cite[Section 2.1]{CLW2}). The anti-linear ($q\rightarrow q^{-1}$) map $\psi$ preserving $E_{\alpha_i}$ and $F_{\alpha_i}$ and sending $K_{\alpha_i}$ to $K^{-1}_{\alpha_i}$ induces the well-known bar involution on $U$. The algebra $U$ is a Hopf algebra with co-multiplication $\Delta$ \cite{Kas}:
\begin{align*}
&\Delta(E_{\alpha_i})=1\otimes E_{\alpha_i}+E_{\alpha_i}\otimes K^{-1}_{\alpha_i},\\
&\Delta(F_{\alpha_i})=F_{\alpha_i}\otimes 1+K_{\alpha_i}\otimes F_{\alpha_i},\\
&\Delta(K_{\alpha_i})=K_{\alpha_i}\otimes K_{\alpha_i}.
\end{align*}

Now let $U^\imath$ be the associative algebra over $\bbQ(q)$ generated by $\{e_{\alpha_i},f_{\alpha_i},k^{\pm 1}_{\alpha_i},t|i>0\}$ subject to the relations in \cite[Chapter 2.1]{BaoWang}. Then, the anti-linear map $\psi^\imath$ that preserves the generators $e_{\alpha_i},f_{\alpha_i},t$, and that sends $k_{\alpha_i}$ to $k^{-1}_{\alpha_i}$, for $i>0$, defines an anti-linear involution on $U^\imath$.

According to \cite[Proposition 2.2]{BaoWang}, there is an embedding of associative algebras $\imath:U^\imath\rightarrow U$ given by
\begin{align*}
\imath(k_{\alpha_i})=K_{\alpha_i}K^{-1}_{\alpha_{-i}},\quad \imath(t)=E_{\alpha_0}+qF_{\alpha_0}K^{-1}_{\alpha_0}+K^{-1}_{\alpha_0},\\
\imath(e_{\alpha_i})=E_{\alpha_i}+K^{-1}_{\alpha_i}F_{\alpha_{-i}},\quad \imath(f_{\alpha_i})=F_{\alpha_i}K^{-1}_{\alpha_{-i}}+E_{\alpha_{-i}},
\end{align*}
so that $U^\imath$ may be regarded as a subalgebra of $U$. We note that $U^\imath$ is not a Hopf subalgebra, but rather a right coideal subalgebra of $U$. The pair $(U,U^\imath)$ forms a quantum symmetric pair, in the sense of \cite{Le03}, corresponding to a quasi-split symmetric pair of Satake type AIII. The subalgebra $U^\imath$ is referred to as an {\em $\imath$-quantum group}.

We observe that the involutions $\psi$ and $\psi^\imath$ are not compatible with the embedding $\imath$. However, there is a remarkable element $\Upsilon$ \cite[Chapter 2.3]{BaoWang} that lies in a certain topological completion of $U$ and that intertwines these involutions in the following sense:
\begin{align}\label{id:interw}
\imath(\psi^\imath(u))\Upsilon=\Upsilon\psi(\imath(u)),\quad \imath(u)\Upsilon=\Upsilon\psi(\imath(\psi^\imath(u))),\quad u\in U^\imath.
\end{align}
This element $\Upsilon$, referred to as the {\em intertwiner}, plays a crucial role in the Bao-Wang $\imath$-canonical basis theory of quantum symmetric pairs.

Following \cite{BaoWang} a $U$-module equipped with an anti-linear bar involution compatible with the bar involution $\psi$ will be called a {\em $U$-involutive module}. We shall denote such an involution on the module also by $\psi$. Similarly, a $U^\imath$-involutive $U^\imath$-module is defined.

Given a $U$-involutive module with involution $\psi$, we can define
\begin{align}\label{form:iota:invo}
\psi^\imath(m):=\Upsilon\left(\psi(m)\right),\quad m\in M.
\end{align}
It follows from \eqref{id:interw} that $M$ is $U^\imath$-involutive with involution $\psi^\imath$. We note that the element $\Upsilon$ is invertible in a completion of $U$.

\subsection{Fock space $\bbT^{m|n}$}\label{sec:tmn}

Let $\bbV$ be the natural $U$-module with linear basis $v_r$, $r\in\hf+\Z$. The action of $U$ is given by
\begin{align*}
E_{\alpha_i}v_r=\delta_{i+\hf,r}v_{r-1},\quad F_{\alpha_i}v_r=\delta_{i-\hf,r}v_{r+1},\quad K_{\alpha_i}v_r=q^{\delta_{i-\hf,r}-\delta_{i+\hf,r}}v_r.
\end{align*}
Let $\bbW$ be the restricted dual with normalized dual basis $\{w_r|r\in\hf+\Z\}$ and $U$-action given by
\begin{align*}
E_{\alpha_i}w_r=\delta_{i-\hf,r}w_{r+1},\quad F_{\alpha_i}w_r=\delta_{i+\hf,r}w_{r-1},\quad K_{\alpha_i}w_r=q^{-\delta_{i-\hf,r}+\delta_{i+\hf,r}}w_r.
\end{align*}

A fundamental property of Lusztig's canonical basis theory for (finite rank) quantum groups is that tensor products of involutive modules can be made into involutive modules \cite[Chapter 27]{Lu} by means of the quasi-$\mc R$-matrix $\Theta$ \cite[Chapter 4]{Lu}. (For the infinite-rank quantum group $U$ such a $\Theta$ is constructed in \cite[Section 3.1]{CLW2} based on \cite{Lu}.) To be precise, for $U$-involutive $U$-modules $M$ and $N$ with involutions $\psi_M$ and $\psi_N$, respectively, the $U$-module $M\otimes N$ is $U$-involutive with involution:
$$\psi(m\otimes n):=\Theta(\psi_M(m)\otimes\psi_N(n)).$$

The $U$-modules $\bbV$ and $\bbW$ are both $U$-involutive with anti-linear involutions $\psi_{\bbV}$ and $\psi_\bbW$ determined by $\psi_{\bbV}(v_r)=v_r$ and $\psi_\bbW(w_r)=w_r$, $r\in\hf+\Z$, respectively.
It follows therefore that the $U$-module $\bbT^{m|n}:=\bbV^{\otimes m}\otimes\bbW^{\otimes n}$, or rather a certain topological completion $\wbT^{m|n}$, is $U$-involutive. Using the formula \eqref{form:iota:invo}, we can therefore make $\wbT^{m|n}$ into a $U^\imath$-involutive $U^\imath$-module.
We shall denote the involution on $\wbT^{m|n}$ also by $\psi^\imath$ so that we have $\psi^\imath(uv)=\psi^\imath(u)\psi^\imath(v)$, for $u\in U^\imath$ and $v\in\wbT^{m|n}$.

\subsection{$\imath$-Canonical bases}\label{sec:icb}

Let $\bbI^{m|n}$ denote the set of functions from $\{1,2,\cdots,m,\bar{1},\cdots,\bar{n}\}$ to $\hf+\Z$. For each $f\in\bbI^{m|n}$ we define
\begin{align*}
M_f:=v_{f(1)}\otimes \cdots\otimes v_{f(m)}\otimes w_{f(\bar{1})}\otimes \cdots\otimes w_{f(\bar{n})}.
\end{align*}
The set $\{M_f|f\in\bbI^{m|n}\}$ is a topological basis for $\wbT^{m|n}$.

We have a bijection between $\bbI_{m|n}$ and $\Lambda^{\texttt{int}}$, the set of integer weights for the Lie superalgebra $\mf{osp}(2m+1|2n)$ (see \eqref{int:wt:B}).
This bijection allows us to transfer notions that make sense for weights  such as typical, dominant, anti-dominant, Bruhat order $\succeq$ etc.~to corresponding notions on $\bbI_{m|n}$. The precise bijection is not important for the subsequent discussion in this section. In this section, we shall only need the fact that the Bruhat order on weights induces a partial order $\succeq$ on $\bbI_{m|n}$.

As we have seen earlier, $\bbT^{m|n}$ is a $U^\imath$-involutive $U^\imath$-module. Recall that we have a right action of the Hecke algebra $\mc H_{B_m}$ (corresponding to the Weyl group $W_{B_m}$ of the Lie algebra of type $B_m$) on $\bbV^{\otimes m}$ (see, e.g., \cite[(5.2)]{BaoWang} for the explicit action). According to \cite[Theorem 5.8]{BaoWang} this action commutes with the action of $U^\imath$ on $\bbV^{\otimes m}$. There is also a right action of the Hecke algebra $\mc H_{A_n}$ (corresponding to the symmetric group $\mf S_n$) on $\bbW^{\otimes n}$ (see, e.g., \cite[(2.9)]{CLW2}) that commutes with the action of $U$ by \cite[Proposition 3]{Jim}. We recall that Hecke algebras have natural anti-linear bar involutions determined by $\ov{H}_i:=H_i^{-1}$, where the $H_i$ are the usual generators satisfying $(H_i-q^{-1})(H_i+q)=0$ and the corresponding braid relations. Furthermore, in \cite[Theorem 5.8]{BaoWang} and \cite[Proposition 3]{Jim} it is shown that the involutive module structures on $\bbV^{\otimes m}$ and $\bbW^{\otimes n}$ are compatible with the bar involutions on the respective Hecke algebras. We have the following.

\begin{prop}\label{prop:comp:inv}
The action of $U^\imath$ and that of $\mc H_{B_m}\times \mc H_{A_n}$ on $\bbT^{m|n}$ commute with each other. Furthermore, these actions are compatible with their respective involutions, i.e., we have for $u\in U^\imath$, $t\in\bbT^{m|n}$, and $H\in\mc H_{B_m}\times \mc H_{A_n}$:
\begin{align*}
\psi^\imath(u t H)=\psi^\imath(u)\psi^\imath(t) \ov{H}.
\end{align*}
\end{prop}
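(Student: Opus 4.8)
The statement has two parts: (1) the action of $U^\imath$ and that of $\mc H_{B_m}\times\mc H_{A_n}$ on $\bbT^{m|n}$ commute, and (2) these commuting actions are compatible with the respective bar involutions $\psi^\imath$ on $\bbT^{m|n}$ and $\ov{(\cdot)}$ on the Hecke algebras. The plan is to reduce everything to the two building-block statements already available in the literature: \cite[Theorem 5.8]{BaoWang}, which handles the $\bbV^{\otimes m}$-factor with the $\mc H_{B_m}$-action, and \cite[Proposition 3]{Jim}, which handles the $\bbW^{\otimes n}$-factor with the $\mc H_{A_n}$-action; the role of $U^\imath$ on the $\bbW$-factors is simply via the restriction $\imath$ of the ordinary $U$-action, since $U^\imath$ acts on $\bbT^{m|n}=\bbV^{\otimes m}\otimes\bbW^{\otimes n}$ through the coideal property $\Delta|_{U^\imath}\colon U^\imath\to U^\imath\otimes U$ (iterated).

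For part (1), I would first note that $\mc H_{B_m}$ acts only on the tensor positions $1,\dots,m$ and $\mc H_{A_n}$ only on the positions $\bar 1,\dots,\bar n$, so these two Hecke actions trivially commute with each other. For the commutation with $U^\imath$: the action of any $u\in U^\imath$ on $\bbT^{m|n}$ is obtained by applying the iterated coproduct and then acting by $U^\imath$ on the first factor $\bbV^{\otimes m}$ and by $U$ on each remaining factor. The $\mc H_{A_n}$-action commutes with the full $U$-action on $\bbW^{\otimes n}$ (this is \cite[Proposition 3]{Jim}), and $U$ acts on $\bbW^{\otimes n}$ via the ordinary coproduct $\Delta$; since the $U^\imath$-action on $\bbT^{m|n}$ factors, on the $\bbW$-positions, through the $U$-coproduct, the $\mc H_{A_n}$-action commutes with it. Symmetrically, \cite[Theorem 5.8]{BaoWang} gives that the $\mc H_{B_m}$-action commutes with the $U^\imath$-action on $\bbV^{\otimes m}$, and the remaining $\bbW$-tensor factors are untouched by $\mc H_{B_m}$; combining these via the coideal structure yields that $\mc H_{B_m}\times\mc H_{A_n}$ commutes with $U^\imath$ on all of $\bbT^{m|n}$. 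Making this precise amounts to chasing how the $(U^\imath\otimes U\otimes\cdots\otimes U)$-action interacts with operators supported on disjoint tensor factors, which is routine but should be spelled out carefully.

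For part (2), recall that $\psi^\imath$ on $\wbT^{m|n}$ is defined from the $U$-involution $\psi$ (built from the quasi-$\mc R$-matrix $\Theta$ on $\bbT^{m|n}=\bbV^{\otimes m}\otimes\bbW^{\otimes n}$, with $\psi_\bbV(v_r)=v_r$ and $\psi_\bbW(w_r)=w_r$) by the formula $\psi^\imath(t)=\Upsilon(\psi(t))$, cf.~\eqref{form:iota:invo}. The $\psi^\imath$-compatibility of the $U^\imath$-action, $\psi^\imath(ut)=\psi^\imath(u)\psi^\imath(t)$, is already asserted in Section \ref{sec:tmn}. It therefore remains to verify $\psi^\imath(tH)=\psi^\imath(t)\ov H$ for $H\in\mc H_{B_m}\times\mc H_{A_n}$; combining with the $U^\imath$-compatibility then gives the displayed identity. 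Since $\Upsilon$ lies in a completion of $U$ and $H$ commutes with the $U$-action (hence, after applying $\imath$, with the relevant completed $U$-operators, $\Upsilon$ included), we have $\psi^\imath(tH)=\Upsilon(\psi(tH))=\Upsilon(\psi(t)\ov H)=(\Upsilon\psi(t))\ov H=\psi^\imath(t)\ov H$, where the key middle step $\psi(tH)=\psi(t)\ov H$ is exactly the $\psi$-compatibility of the Hecke actions established in \cite[Theorem 5.8]{BaoWang} for the $\bbV$-factor and \cite[Proposition 3]{Jim} for the $\bbW$-factor, together with the fact that $\psi$ on $\bbT^{m|n}$ is built from $\Theta$ in a way compatible with tensor products (so the two single-type statements assemble to one on $\bbT^{m|n}$). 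The main obstacle I anticipate is precisely the bookkeeping in this last assembly: one must check that the quasi-$\mc R$-matrix $\Theta$ (which mixes all tensor factors) does not spoil the compatibility that was proved factor-by-factor, i.e., that $\Theta$ commutes with the $\mc H_{B_m}\times\mc H_{A_n}$-action — this follows because $\Theta$ is expressed in terms of the $U$-action and the Hecke operators are $U$-linear on $\bbW^{\otimes n}$ and $U^\imath$-linear (hence compatible with the relevant $\Theta^\imath$-type operator) on $\bbV^{\otimes m}$, but articulating it cleanly is where the real work lies.
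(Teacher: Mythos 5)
Your part (1) is correct and matches the paper's argument. The problem is in part (2). Your displayed chain $\psi^\imath(tH)=\Upsilon(\psi(tH))=\Upsilon(\psi(t)\ov H)=(\Upsilon\psi(t))\ov H$ rests on two claims that fail for the type-$B$ generator $H_0$ of $\mc H_{B_m}$: (i) that the $U$-involution $\psi$ on $\bbT^{m|n}$ satisfies $\psi(tH)=\psi(t)\ov H$, and (ii) that $\Upsilon$, an element of a completion of $U$, commutes with the Hecke action. The generator $H_0$ commutes only with the $U^\imath$-action on $\bbV^{\otimes m}$, not with the $U$-action (otherwise $\mc H_{B_m}$ would land in $\End_U(\bbV^{\otimes m})$, which is already exhausted by the type-$A$ Hecke algebra), and \cite[Theorem 5.8]{BaoWang} is a statement about the compatibility of $\mc H_{B_m}$ with the $\imath$-involution $\psi^\imath$ on $\bbV^{\otimes m}$, not with $\psi$. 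For the same reason there is no ground for $\Upsilon$, which lies in a completion of $U$ and not of $U^\imath$, to commute with $H_0$. So the "key middle step" and the step after it are both unjustified exactly where the statement is nontrivial.

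The paper avoids this by never manipulating $\psi^\imath=\Upsilon\circ\psi$ globally; instead it uses the factorized formula $\psi^\imath(x\otimes y)=\Theta^\imath\left(\psi^\imath(x)\otimes\psi(y)\right)$ for $x\in\bbV^{\otimes m}$, $y\in\bbW^{\otimes n}$, where $\Theta^\imath=\Delta(\Upsilon)\Theta(\Upsilon^{-1}\otimes 1)$ is the quasi-$\mc K$-matrix, together with the key input from \cite[Chapter 3.3]{BaoWang} that $\Theta^\imath$ lies in a completion of $U^\imath\otimes U$. Writing $\Theta^\imath=\sum_i\theta_1^i\otimes\theta_2^i$ with $\theta_1^i\in U^\imath$ and $\theta_2^i\in U$, each first component commutes with the $\mc H_{B_m}$-action on $\bbV^{\otimes m}$ by \cite[Theorem 5.8]{BaoWang} and each second component commutes with the $\mc H_{A_n}$-action on $\bbW^{\otimes n}$ by \cite[Proposition 3]{Jim}; combined with the factor-wise compatibilities $\psi^\imath(xh_1)=\psi^\imath(x)\ov{h}_1$ and $\psi(yh_2)=\psi(y)\ov{h}_2$ this closes the argument. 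You gesture at the right mechanism in your final sentence, but the membership of $\Theta^\imath$ in (a completion of) $U^\imath\otimes U$ is the missing idea, and the chain of equalities you actually commit to is not salvageable without replacing it by this factorized computation.
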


\begin{proof}
{
Let $x\in \mathbb V^{\otimes m}$, $y\in\mathbb W^{\otimes n}$, $u\in U^\imath$, $h_1\in \mc H_{B_m}$ and $h_2\in\mc H_{A_n}$. We have $\Delta(u)=\sum_{i}u_1^i\otimes u_2^i$, with $u_1^i\in U^\imath$ and $u_2^i\in U$. We have
\begin{align*}
\left(\Delta(u)(x\otimes y)\right)(h_1\otimes h_2)&=\left(\sum_i u_1^i x\otimes u_2^i y\right)(h_1\otimes h_2)
=\sum_i \left(u_1^i x\right)h_1\otimes \left(u_2^i y\right)h_2\\
&= \sum_iu_1^i \left(xh_1\right)\otimes u_2^i \left(yh_2\right) = \Delta(u)\left((x\otimes y)(h_1\otimes h_2)\right),
\end{align*}
where the penultimate identity above follows from \cite[Theorem 5.8]{BaoWang} and \cite[Proposition 3]{Jim}. This implies that the actions of $U^\imath$ and $\mc H_{B_m}\otimes \mc H_{A_n}$ on $\bbT^{m|n}$ commute.

By \cite[Proposition 3.7 and Remark 3.14]{BaoWang} the involution $\psi^\imath$ on $\bbT^{m|n}$ can be written in the following form as an involution on the tensor product of the $U^\imath$-involutive module $\mathbb V^{\otimes m}$ and the $U$-involutive module $\mathbb W^{\otimes n}$ using the quasi-$\mc K$-matrix $\Theta^\imath$:
\begin{align}\label{ibar:tensor}
\psi^\imath(x\otimes y)=\Theta^\imath(\psi^\imath(x)\otimes\psi(y)),
\end{align}
where $\Theta^\imath:=\Delta(\Upsilon)\Theta(\Upsilon^{-1}\otimes 1)$. It is shown in \cite[Chapter 3.3]{BaoWang} that $\Theta^\imath$ is an element in (the completion of) $U^\imath\otimes U$. We shall write $\Theta^\imath=\sum_i \theta_1^i\otimes\theta_2^i$, with $\theta_1^i\in U^\imath$ and $\theta_2^i\in U$.

Now, for $u\in U^\imath$, we have clearly $\psi^\imath(u\cdot x\otimes y)=\psi^\imath(u)\cdot \psi^\imath(x\otimes y)$, since $\bbT^{m|n}$ is a $U^\imath$-involutive module by \cite[Proposition 3.13]{BaoWang}.

On the other hand, for $x$, $y$, $h_1$ and $h_2$ as above, we have, again using \cite[Theorem 5.8]{BaoWang} and \cite[Proposition 3]{Jim}:
\begin{align*}
\psi^\imath\left(\left(x\otimes y\right)\left(h_1\otimes h_2\right)\right)&=\Theta^\imath\left(\psi^\imath(xh_1)\otimes\psi(yh_2)\right) = \Theta^\imath\left(\psi^\imath(x)\ov{h}_1\otimes\psi(y)\ov{h}_2\right)\\
=\sum_i &\theta_1^i(\psi^\imath(x)\ov{h}_1)\otimes\theta_2^i(\psi(y)\ov{h}_2) = \sum_i \left(\theta_1^i\psi^\imath(x)\right)\ov{h}_1\otimes\left(\theta_2^i\psi(y)\right)\ov{h}_2\\
=\Theta^\imath&\left(\psi^\imath(x)\otimes\psi(y)\right)(\ov{h}_1\otimes\ov{h}_2) = \psi^\imath\left(x\otimes y\right)(\ov{h}_1\otimes\ov{h}_2).
\end{align*}
This concludes the proof.
}
\end{proof}

One has the following important property of the involution $\psi^\imath:\wbT^{m|n}\rightarrow\wbT^{m|n}$ \cite[Lemma 9.7]{BaoWang}:
\begin{align*}
\psi^\imath(v_f)\in v_f+\sum_{g\prec f}\Z[q,q^{-1}]v_g\subseteq \widehat{\bbT}^{m|n}.
\end{align*}
Since the partial order $\preceq$ satisfies the finite interval property by \cite[Lemma 8.4]{BaoWang} we can apply \cite[Lemma 24.2.1]{Lu} and conclude the existence of $\psi^\imath$-invariant bases:
\begin{prop}
There exist unique $\psi^\imath$-invariant bases $\{T_f|f\in\bbI^{m|n}\}$ and $\{L_f|f\in\bbI^{m|n}\}$ of the form
\begin{align*}
&T_f=M_f+\sum_{g\prec f}t_{gf}(q)M_g,\quad t_{gf}(q)\in q\Z[q],\\
&L_f=M_f+\sum_{g\prec f}\ell_{gf}(q)M_g,\quad\ell_{gf}(q)\in q^{-1}\Z[q^{-1}].\\
\end{align*}
\end{prop}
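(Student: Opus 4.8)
The plan is to verify the hypotheses of Lusztig's standard existence lemma for canonical bases \cite[Lemma 24.2.1]{Lu} and then to apply it twice: once with the integrality condition $q\Z[q]$ to produce the $T_f$, and once with $q^{-1}\Z[q^{-1}]$ to produce the $L_f$.

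First I would record that $\psi^\imath\colon\wbT^{m|n}\rightarrow\wbT^{m|n}$ is a well-defined anti-linear involution. Anti-linearity, together with the compatibility $\psi^\imath(uv)=\psi^\imath(u)\psi^\imath(v)$ for $u\in U^\imath$ and $v\in\wbT^{m|n}$, was already noted in Section \ref{sec:tmn}; that $(\psi^\imath)^2=\Id$ follows from the intertwining identities \eqref{id:interw} for $\Upsilon$ together with the invertibility of $\Theta$ and $\Upsilon$ in the relevant completions, as in \cite[Chapter 3]{BaoWang}.

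Next I would feed in the two structural inputs from \cite{BaoWang} recalled just above the statement: the triangularity $\psi^\imath(M_f)\in M_f+\sum_{g\prec f}\Z[q,q^{-1}]M_g$ \cite[Lemma 9.7]{BaoWang}, and the finite interval property of $\preceq$, i.e., $\{g\mid g\preceq f\}$ is finite for every $f$ \cite[Lemma 8.4]{BaoWang}. Together these say precisely that $\psi^\imath$ is a bar-type involution on the completed free module with poset-indexed topological basis $\{M_f\mid f\in\bbI^{m|n}\}$ to which \cite[Lemma 24.2.1]{Lu} applies. Invoking that lemma with integrality ring $q\Z[q]$ yields unique $\psi^\imath$-invariant elements $T_f=M_f+\sum_{g\prec f}t_{gf}(q)M_g$ with $t_{gf}(q)\in q\Z[q]$, and with $q^{-1}\Z[q^{-1}]$ yields unique $\psi^\imath$-invariant elements $L_f=M_f+\sum_{g\prec f}\ell_{gf}(q)M_g$ with $\ell_{gf}(q)\in q^{-1}\Z[q^{-1}]$; the downward recursion solving $\psi^\imath(T_f)=T_f$ (resp.\ $\psi^\imath(L_f)=L_f$) terminates because each interval $\{g\prec f\}$ is finite, and unitriangularity with respect to $\{M_f\}$ shows that these families are (topological) bases.

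The only delicate point---and the step I would treat most carefully---is ensuring that everything takes place inside a completion of $\bbT^{m|n}$ in which $\Upsilon$, $\Theta$, and $\Theta^\imath$ act and are invertible and in which the countable sums produced by the recursion are genuine elements; the finite interval property is exactly what makes this bookkeeping legitimate, so I expect no real obstacle beyond it.
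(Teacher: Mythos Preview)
Your proposal is correct and follows exactly the paper's approach: cite the triangularity of $\psi^\imath$ from \cite[Lemma 9.7]{BaoWang} and the finite interval property from \cite[Lemma 8.4]{BaoWang}, then invoke \cite[Lemma 24.2.1]{Lu}. One small imprecision: the finite interval property asserts that each interval $\{g\mid g'\preceq g\preceq f\}$ is finite, not that the down-set $\{g\mid g\preceq f\}$ is finite (indeed the latter is generally infinite here, which is why the paper works in the completion $\wbT^{m|n}$ and speaks of topological bases); the recursion therefore converges in the completion rather than terminating, but this is precisely the bookkeeping you flag in your final paragraph.
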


We set $t_{ff}=\ell_{ff}=1$ and $t_{gf}=\ell_{gf}=0$, for $g\not\preceq f$.

\subsection{$q$-symmetric tensors}\label{sec:qsymm}

For a finite Weyl group $W$ of a simple Lie algebra, we recall the well-known Chevalley-Solomon formula \cite[Corollary 2.3]{Sol66}:
\begin{align*}
\sum_{\tau\in W}q^{\ell(\tau)}=\prod_{i=1}^r\left(1+q+\cdots+q^{e_i}\right),
\end{align*}
where $r$ is the rank of $W$ and $e_1,\cdots,e_r$ are the exponents of the corresponding Lie algebra. This implies the following formula:
\begin{align*}
\sum_{\tau\in W}q^{\ell(w_0)-\ell(\tau)}=\prod_{i=1}^r[e_i+1],
\end{align*}
where $[s]$ is the quantum integer $\frac{q^s-q^{-s}}{q-q^{-1}}$, for $s\in\bbN$.  We shall write $[W]:=\prod_{i=1}^r[e_i+1]$.
Now, suppose that $W$ is a Weyl group of a reductive Lie algebra so that $W=W^1\times \cdots \times W^t$, where each $W^j$, $1\le j\le t$, is a Weyl group of a simple Lie algebra. We shall use the following notation:
\begin{align*}
[W]:=\prod_{j=1}^t[W^j].
\end{align*}

Let $W_\zeta$ be a (parabolic) subgroup of $W_{B_m}\times\mf{S}_n$ generated by simple reflections. Indeed, $W_\zeta$ is a product of Weyl groups of the form  $W_\zeta= W^1\times \cdots \times W^s\times W^{s+1}\times\cdots \times W^{s+t}$, where $W^1$ is either of type $A$ or $B$, and the remaining $W^i$s are all of type $A$. Furthermore, the sum of the ranks of the $W^i$s, $i=1,\cdots,s$ (respectively, $i=s+1,\cdots,s+t$) equals $m$ (respectively, equals $n$).
Let $\mc H_\zeta$ be the Hecke subalgebra of the Hecke algebra $\mc{H}_{B_m}\times \mc H_{A_n}$ associated with $W_\zeta$. Let
\begin{align}\label{eq:q:symm}
S_\zeta:=\sum_{\sigma\in W_\zeta} q^{\ell(w_0^\zeta)-\ell(\sigma)}H_\sigma = \prod_{i=1}^{s+t}S_i,
\end{align}
where $w_0^\zeta$ is the longest element in $W_\zeta$, $S_i=\sum_{\tau\in W^i} q^{\ell(w_0^i)-\ell(\tau)}H_\tau$, and $w_0^i$ is the longest element in $W^i$. The bar involution on $\mc{H}_{B_m}\times \mc H_{A_n}$ restricts to the bar involution on $\mc{H}_\zeta$, and we have that
\begin{align*}
\ov{S_i}=S_i\quad \text{and}\quad S_i H_\tau=q^{-\ell(\tau)}S_i=H_\tau S_i ,\ \tau\in W^i,\\
\ov{S_\zeta}=S_\zeta\quad \text{and}\quad S_\zeta H_\sigma=q^{-\ell(\sigma)}S_\zeta=H_\sigma S_\zeta ,\ \sigma\in W_\zeta.
\end{align*}

Consider the following $\bbQ(q)$-linear subspace of $\bbT^{m|n}$:
\begin{align}\label{eq:Fock:z}
\mathbb T^{m|n}_\zeta:=\mathbb T^{m|n} S_\zeta.
\end{align}
Then $\mathbb T^{m|n} S_\zeta$ is a $U^\imath$-submodule by Proposition \ref{prop:comp:inv}.
Let $\bbI^{m|n}_{\zeta-}$ denote the set of elements in $\bbI^{m|n}$ that are anti-dominant with respect to $W_\zeta$. For $f\in\Z^{m|n}_{\zeta-}$, we let $W_f:=\{\sigma\in W_\zeta|f\cdot\sigma=f\}$ be the stabilizer subgroup inside $W_\zeta$ and $W^f$ the set of shortest length representatives of the coset $W_f\backslash W_\zeta$.

Define the following sets of monomial bases for $\mathbb T^{m|n}_\zeta$:
\begin{align}\label{eq:Nf}
\widetilde{N}_f:=M_f S_\zeta;\qquad \widetilde{M}_f:=\frac{1}{[W_f]}M_f S_\zeta,\quad f\in\bbI^{m|n}_{\zeta-}.
\end{align}

From the formulas of the action of the Hecke algebra on $M_f$ it follows that the basis elements $\widetilde{N}_f$ lie in the $\Z[q,q^{-1}]$-span of $\{M_g|g\in\bbI^{m|n}\}$. The $\widetilde{M}_f$s also lie in the $\Z[q,q^{-1}]$-span of $\{M_g|g\in\bbI^{m|n}\}$, which can be seen as follows. Suppose that the Hecke algebra of $W^i$ acts on $\mathbb V^{\otimes m_i}$, for $i=1,\cdots,s$, and the Hecke algebra of $W^j$ acts on $\mathbb W^{\otimes n_{j}}$, $j=s+1,\cdots,s+t$. We have $\sum_{i}m_i=m$ and $\sum_{j}n_j=n$. The element
\begin{align*}
\widetilde{M}_{f_i}=\frac{1}{[W_f\cap W^i]}M_{f_i}S_i,
\end{align*}
where $f_i$ is $f$ restricted to the interval corresponding to $W^i$, is an $\imath$-canonical basis element for $i=1$ and a canonical basis element for $i>1$. Thus, by \cite[Theorem 4.20]{BaoWang} in the case of $i=1$ and \cite[Theorem 27.3.2]{Lu} in the case $i>1$, it lies in the $\Z[q,q^{-1}]$-span of corresponding $M_{g_i}$, where the notation of $g_i$ is self-explanatory. Now, we have
\begin{align*}
\widetilde{M}_f=\widetilde{M}_{f_1}\otimes\cdots\otimes\widetilde{M}_{f_{s+t}},
\end{align*}
and hence it lies in the $\Z[q,q^{-1}]$-span of $M_{g}$, $g\in\bbI^{m|n}$.

For $f\in\Z^{m|n}_{\zeta-}$, we also define
\begin{align}\label{def:Nf}
N_f:=\frac{[W_\zeta]}{[W_f]}\widetilde{N}_f.
\end{align}

Suppose that $f\in\bbI^{m|n}$, but not necessarily in $\bbI^{m|n}_{\zeta-}$. Let $\tau\in W_\zeta$ be of shortest length such that $f\cdot\tau\in\bbI^{m|n}_{\zeta-}$. Then, we have
\begin{align}\label{eq:MfS}
M_f S_\zeta= M_{f\cdot\tau} H_{\tau^{-1}} S_\zeta = q^{-\ell(\tau)} M_{f\cdot\tau}S_\zeta = q^{-\ell(\tau)} \widetilde{N}_{f\cdot\tau}.
\end{align}
Similarly, we have the identity $\frac{1}{[W_f]}M_f S_\zeta=q^{-\ell(\tau)}\widetilde{M}_{f\cdot\tau}$.

\begin{lem} The bar involution on $\widehat{\mathbb T}^{m|n}$ restricts to a bar involution on $\widehat{\mathbb T}^{m|n}_\zeta$ such that,
for $f\in\Z^{m|n}_{\zeta-}$, we have:
\begin{itemize}
\item[(1)] $\psi^\imath\left({\widetilde{N}}_f\right)\in \widetilde{N}_f+\sum_{g\prec f}\Z[q,q^{-1}] \widetilde{N}_g$,
\item[(2)] $\psi^\imath\left({\widetilde{M}}_f\right)\in \widetilde{M}_f+\sum_{g\prec f}\Z[q,q^{-1}] \widetilde{M}_g$,
\item[(3)] $\psi^\imath\left({N}_f\right)\in {N}_f+\sum_{g\prec f}\Z[q,q^{-1}] {N}_g.$
\end{itemize}
\end{lem}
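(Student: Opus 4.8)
The plan is to reduce all three assertions to the triangularity of $\psi^\imath$ on $\widehat{\mathbb T}^{m|n}$ recorded above (\cite[Lemma 9.7]{BaoWang}), namely $\psi^\imath(M_f)\in M_f+\sum_{g\prec f}\Z[q,q^{-1}]M_g$, together with $\overline{S_\zeta}=S_\zeta$ and Proposition \ref{prop:comp:inv}, which gives $\psi^\imath(vS_\zeta)=\psi^\imath(v)S_\zeta$ for $v\in\widehat{\mathbb T}^{m|n}$. In particular $\psi^\imath$ maps $\widehat{\mathbb T}^{m|n}_\zeta=\widehat{\mathbb T}^{m|n}S_\zeta$ into itself, and being the restriction of an anti-linear involution, it is an anti-linear involution on $\widehat{\mathbb T}^{m|n}_\zeta$; this is the first clause of the lemma.

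For (1) I would compute directly: $\psi^\imath(\widetilde N_f)=\psi^\imath(M_f)S_\zeta=\widetilde N_f+\sum_{g\prec f}c_{gf}(q)M_gS_\zeta$ with $c_{gf}\in\Z[q,q^{-1}]$, and then rewrite each $M_gS_\zeta$ via \eqref{eq:MfS} as $q^{-\ell(\tau_g)}\widetilde N_{\widehat g}$, where $\widehat g:=g\cdot\tau_g\in\Z^{m|n}_{\zeta-}$ is the $W_\zeta$-anti-dominant representative of the $W_\zeta$-orbit of $g$. No denominators appear, so the coefficient of each $\widetilde N_{\widehat g}$ lies in $\Z[q,q^{-1}]$. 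To see that only $\widehat g\prec f$ occur, I use that the anti-dominant representative is the $\preceq$-minimal element of its $W_\zeta$-orbit (compatibility of $\preceq$ with the Bruhat order on $W_\zeta$): thus $\widehat g\preceq g\prec f$, hence $\widehat g\prec f$. Collecting terms gives (1).

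Statements (2) and (3) will be essentially equivalent, since $N_f=\tfrac{[W_\zeta]}{[W_f]}\widetilde N_f=[W_\zeta]\,\widetilde M_f$ and $[W_\zeta]$, being a product of quantum integers $[e_i+1]$, is bar-invariant; hence (3) follows from (2) by applying $\psi^\imath$ and pulling out $[W_\zeta]$. The difficulty with (2) is that the naive rewriting of $\psi^\imath(\widetilde M_f)=\tfrac1{[W_f]}\psi^\imath(M_f)S_\zeta$ produces the possibly non-integral ratios $[W_{\widehat g}]/[W_f]$, and this is the main obstacle. I would circumvent it by a lattice argument. Let $\mathcal L$ be the (completed) $\Z[q,q^{-1}]$-span of $\{M_g\}$. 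Then $\psi^\imath(\widetilde M_f)\in\mathcal L$, because $\psi^\imath(\mathcal L)\subseteq\mathcal L$ by the triangularity of $\psi^\imath$ and $\widetilde M_f\in\mathcal L$, the latter via the factorization $\widetilde M_f=\widetilde M_{f_1}\otimes\cdots\otimes\widetilde M_{f_{s+t}}$ into ($\imath$-)canonical basis elements of the sub-tensor spaces (by \cite[Theorem 4.20]{BaoWang} and \cite[Theorem 27.3.2]{Lu}, each lying in the corresponding monomial lattice). Combined with $\psi^\imath(\widetilde M_f)\in\widehat{\mathbb T}^{m|n}_\zeta$, we have $\psi^\imath(\widetilde M_f)\in\mathcal L\cap\widehat{\mathbb T}^{m|n}_\zeta$. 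The key claim is then $\mathcal L\cap\widehat{\mathbb T}^{m|n}_\zeta=\bigoplus_{h\in\Z^{m|n}_{\zeta-}}\Z[q,q^{-1}]\widetilde M_h$: decomposing $\widehat{\mathbb T}^{m|n}$ as the completed direct sum over $W_\zeta$-orbits, the intersection of each orbit's monomial lattice with $\widehat{\mathbb T}^{m|n}_\zeta$ is the line through the corresponding $\widetilde M_h$, and $\widetilde M_h$ is a primitive vector of that lattice since it is a tensor product of ($\imath$-)canonical basis elements, each with leading monomial coefficient $1$, and over the PID $\Z[q,q^{-1}]$ a tensor product of primitive vectors is primitive. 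Finally, the unitriangularity in (2) ($a_f=1$ and $a_h=0$ unless $h\prec f$) follows exactly as in (1), by rewriting $M_gS_\zeta=q^{-\ell(\tau_g)}[W_{\widehat g}]\widetilde M_{\widehat g}$ and using the $\preceq$-minimality of anti-dominant elements.
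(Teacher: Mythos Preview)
Your proof of Part (1) matches the paper's exactly, and your deduction of (3) from (2) via the bar-invariance of $[W_\zeta]$ is the same as well.

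For Part (2) you take a genuinely different route. The paper does not pass through the monomial lattice $\mathcal L$ at all; instead it exploits the tensor-product formula \eqref{ibar:tensor}, $\psi^\imath(x\otimes y)=\Theta^\imath(\psi^\imath(x)\otimes\psi(y))$, together with the integrality of $\Upsilon$ and $\Theta$ (hence of $\Theta^\imath$) and the fact that the integral forms of $U^\imath$ and $U$ preserve the $\Z[q,q^{-1}]$-span of ($\imath$-)canonical basis elements in each tensor factor. This shows directly that $\psi^\imath$ preserves the $\Z[q,q^{-1}]$-span of the $\widetilde M_g$'s, without ever comparing lattices. Your argument is more elementary in that it uses only the monomial triangularity of $\psi^\imath$ and avoids invoking integrality of $\Theta^\imath$; what you pay for this is the extra step of identifying $\mathcal L\cap\widehat{\mathbb T}^{m|n}_\zeta$ orbit-by-orbit. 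Both work.

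One correction: $\Z[q,q^{-1}]$ is \emph{not} a PID (e.g.\ the ideal $(2,q-1)$ is not principal), so your appeal to ``over the PID $\Z[q,q^{-1}]$ a tensor product of primitive vectors is primitive'' is misstated. Fortunately you do not need it: you already observe that each ($\imath$-)canonical factor $\widetilde M_{f_i}$ has a monomial coefficient equal to $1$, so $\widetilde M_h$ itself has a monomial coefficient equal to $1$, and hence $c\,\widetilde M_h\in\mathcal L$ forces $c\in\Z[q,q^{-1}]$ directly. Drop the PID remark and the argument stands.
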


\begin{proof}
For $f\in\bbI^{m|n}_{\zeta-}$, suppose that
$\psi^\imath(M_f) = M_f+\sum_{g\prec f}r_{gf}M_g$ with $r_{gf}\in\Z[q,q^{-1}]$.
Then, by Proposition \ref{prop:comp:inv} and \eqref{eq:MfS}, we have
\begin{align*}
\psi^\imath({\widetilde{N}}_f)&=\psi^\imath({M_f S_\zeta})=\psi^\imath({M}_f) S_\zeta = M_fS_\zeta+\sum_{g\prec f}r_{gf}M_gS_\zeta
=\widetilde{N}_f + \sum_{g\prec f; g\in\bbI^{m|n}_{\zeta -}}r'_{gf} \widetilde{N}_g.
\end{align*}
Since $r_{gf}\in\Z[q,q^{-1}]$, we have $r'_{fg}\in\Z[q,q^{-1}]$ by \eqref{eq:MfS}. This proves Part (1).

We know that each $\widetilde{M}_f$ is a decomposable tensor, where the first factor is either an $\imath$-canonical or canonical basis element and the remaining ones are canonical basis elements in the respective tensor powers of either $\mathbb V$ or $\mathbb W$. Now, the $\Z[q,q^{-1}]$-span of such first tensor factors of that form is invariant under the action of integral form of $U^\imath$, while the $\Z[q,q^{-1}]$-span of the remaining tensor factors are invariant under the action of the integral form of $U$. From the formula \eqref{ibar:tensor}, and the integrality of both $\Upsilon$ and $\Theta$ it follows that $\psi^\imath\left({\widetilde{M}}_f\right)$ lies in the $\Z[q,q^{-1}]$-space of the ${\widetilde{M}}_g$s. This gives Part (2).

Part (3) now follows from Part (2).
\end{proof}

Now we can apply
 \cite[Lemma 24.2.1]{Lu} and obtain the following.

\begin{prop}
The $\bbQ(q)$-vector space $\widehat{\mathbb T}^{m|n}_\zeta$ has unique
$\psi^\imath$-invariant topological bases of the form
\begin{align*}
\{\mc{ T}'_f|f\in\bbI^{m|n}_{\zeta-}\},\quad\{\mc{ T}_f|f\in\bbI^{m|n}_{\zeta-}\}\quad\text{and}\quad\{\mc{ L}_f|f\in\bbI^{m|n}_{\zeta-}\}
\end{align*}
such that
\begin{align*}
\mc T'_f=\widetilde{M}_f+\sum_{g\prec f}\texttt{t}'_{gf}(q)\widetilde{M}_g,
\quad
\mc T_f={N}_f+\sum_{g\prec f}\texttt{t}_{gf}(q){N}_g,
 \quad
\mc L_f=\widetilde{N}_f+\sum_{g\prec f}\texttt{l}_{gf}(q)\widetilde{N}_g,
\end{align*}
with $\texttt{t}'_{gf}(q),\texttt{t}_{gf}{(q)}\in q\Z[q]$, and $\texttt{l}_{gf}(q)\in
q^{-1}\Z[q^{-1}]$, for $g\prec f$ in $\Z^{m|n}_{\zeta-}$.
\end{prop}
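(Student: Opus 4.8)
The plan is to derive all three bases from a single general device, Lusztig's lemma on bar-invariant bases \cite[Lemma 24.2.1]{Lu}, applied to each of the three monomial topological bases $\{\widetilde{N}_f\}$, $\{\widetilde{M}_f\}$, $\{N_f\}$ of $\widehat{\mathbb T}^{m|n}_\zeta$ indexed by $f\in\bbI^{m|n}_{\zeta-}$ — in exactly the spirit of the Proposition in Section \ref{sec:icb}, which treats the ambient space $\widehat{\mathbb T}^{m|n}$. Two inputs are required. First, the partial order $\preceq$ restricted to $\bbI^{m|n}_{\zeta-}$ must satisfy the finite interval property; this is immediate, since $\bbI^{m|n}_{\zeta-}$ is a subposet of $\bbI^{m|n}$ and any interval in it is contained in the corresponding interval of $\bbI^{m|n}$, which is finite by \cite[Lemma 8.4]{BaoWang}. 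Second, the bar involution $\psi^\imath$ must restrict from $\widehat{\mathbb T}^{m|n}$ to $\widehat{\mathbb T}^{m|n}_\zeta$ and act unitriangularly, with coefficients in $\Z[q,q^{-1}]$, on each of the three monomial bases — which is precisely the content of the preceding lemma, parts (1), (2) and (3).

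Granting these, the argument is formal. By part (1) of the preceding lemma one has $\psi^\imath(\widetilde{N}_f)\in\widetilde{N}_f+\sum_{g\prec f}\Z[q,q^{-1}]\widetilde{N}_g$, so \cite[Lemma 24.2.1]{Lu} applied to the topological basis $\{\widetilde{N}_f\}$ yields a unique $\psi^\imath$-invariant element $\mc L_f\in\widetilde{N}_f+\sum_{g\prec f}q^{-1}\Z[q^{-1}]\widetilde{N}_g$, with coefficients $\texttt{l}_{gf}(q)\in q^{-1}\Z[q^{-1}]$ produced by the standard recursion whose termination is guaranteed by the finite interval property. The same lemma — which, exactly as in Section \ref{sec:icb}, simultaneously provides the $q\Z[q]$-normalized companion basis — applied to $\{\widetilde{M}_f\}$ and to $\{N_f\}$ using parts (2) and (3) of the preceding lemma, produces the unique $\psi^\imath$-invariant elements $\mc T'_f\in\widetilde{M}_f+\sum_{g\prec f}q\Z[q]\widetilde{M}_g$ and $\mc T_f\in N_f+\sum_{g\prec f}q\Z[q]N_g$ with $\texttt{t}'_{gf}(q),\texttt{t}_{gf}(q)\in q\Z[q]$. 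One then records the conventions $\texttt{t}'_{ff}=\texttt{t}_{ff}=\texttt{l}_{ff}=1$ and $\texttt{t}'_{gf}=\texttt{t}_{gf}=\texttt{l}_{gf}=0$ for $g\not\preceq f$, in parallel with Section \ref{sec:icb}.

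There is no genuine obstacle here; the substantive work has already been absorbed into the preceding lemma, and the only remaining issues are bookkeeping. It is worth stressing why the three assertions require separate appeals to \cite[Lemma 24.2.1]{Lu} rather than a single rescaling of one basis: the scalars relating $\widetilde{N}_f$, $\widetilde{M}_f$ and $N_f$ (see \eqref{eq:Nf} and \eqref{def:Nf}) are non-monomial elements of $\bbQ(q)$ — notably the $f$-dependent factor $\tfrac{1}{[W_f]}$ — so they do not preserve the coefficient rings $q\Z[q]$ and $q^{-1}\Z[q^{-1}]$, which is exactly why the preceding lemma is formulated in three parts. A second, purely technical point: \cite[Lemma 24.2.1]{Lu} is phrased for a free module over $\Z[q,q^{-1}]$, whereas $\widehat{\mathbb T}^{m|n}_\zeta$ is a topological completion; as in \cite[Chapters 8--9]{BaoWang} and \cite{CLW2}, the finite interval property allows the defining recursion to be carried out on each finite interval below $f$, so the lemma applies verbatim and the (possibly infinite) sums $\sum_{g\prec f}$ define genuine topological bases of $\widehat{\mathbb T}^{m|n}_\zeta$.
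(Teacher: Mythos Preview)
Your proposal is correct and follows essentially the same approach as the paper: the paper simply states ``Now we can apply \cite[Lemma 24.2.1]{Lu} and obtain the following,'' relying on the preceding lemma for unitriangularity and integrality, together with the finite interval property. Your additional remarks on why the three bases require separate applications and on the passage to the topological completion are sound elaborations, but the core argument is identical.
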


As usual, we adopt the convention
$\texttt{t}'_{ff}(q)=\texttt{t}_{ff}(q)=\texttt{l}_{ff}(q)=1$, $\texttt{t}'_{gf}=\texttt{t}_{gf}=\texttt{l}_{gf}=0$ for $g\npreceq f$.

\subsection{$\imath$-Canonical Bases on $q$-symmetric tensors}\label{sec:can:last}

This section compares the bases $\{T_f\}$ and $\{L_f\}$ with the bases $\{\mathcal T_f\}$ and $\{\mathcal L_f\}$. Since this part is analogous to \cite[Section 3.7]{CCM} and follows along the same line of arguments, we shall be brief and only summarize the main results.

\begin{prop}\label{pi:can:basis}
Let $f,g\in\bbI^{m|n}_{\zeta-}$ and $w_0^\zeta$ be the longest element in $W_\zeta$. We have
\begin{align*}
\mc T'_f = T_{fw_0^\zeta},\qquad \mc T'_fS_\zeta=\mc T_f,\qquad
L_fS_\zeta = \mc{L}_f.
\end{align*}
In particular, we have $\texttt{t}_{gf}=\texttt{t}'_{gf} =t_{g\cdot w_0^\zeta,f\cdot w_0^\zeta}$ and $\texttt{l}_{gf} = \sum_{x\in W^g}\ell_{g\cdot x,f}q^{-\ell(x)}$, where $W^g$ is the set of shortest length representatives $W_f\cap W_\zeta\backslash W_\zeta$.
\end{prop}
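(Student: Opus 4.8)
The plan is to deduce all three identities from the uniqueness of $\psi^\imath$-invariant topological bases \cite[Lemma 24.2.1]{Lu} --- the very uniqueness already used to construct $\mc T'_f$, $\mc T_f$ and $\mc L_f$ --- exactly along the lines of \cite[Section 3.7]{CCM}. In each case I would exhibit on the right-hand side an element that is manifestly $\psi^\imath$-invariant and has the correct triangular leading shape, and then invoke uniqueness.

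First I would set up the combinatorial dictionary between the monomial basis $\{M_h\mid h\in\bbI^{m|n}\}$ of $\wbT^{m|n}$ and the bases $\{\widetilde N_f\}$, $\{\widetilde M_f\}$, $\{N_f\}$ of $\wbT^{m|n}_\zeta$. Expanding $S_\zeta=\sum_{\sigma\in W_\zeta}q^{\ell(w_0^\zeta)-\ell(\sigma)}H_\sigma$, factoring each $\sigma\in W_\zeta$ as $\sigma=w\sigma'$ with $w\in W_g$, $\sigma'\in W^g$ and lengths adding, and using the explicit right $\mc H_{B_m}\times\mc H_{A_n}$-action on the $M_h$, a short computation gives, for $g\in\bbI^{m|n}_{\zeta-}$, an expansion $\widetilde M_g=M_{g\cdot w_0^\zeta}+\sum_{\sigma'\in W^g}(q\Z[q])\,M_{g\cdot\sigma'}$ whose leading term is $M_{g\cdot w_0^\zeta}$ and whose remaining monomials lie in the same $W_\zeta$-orbit as $g$ and are $\prec g\cdot w_0^\zeta$; thus $\{\widetilde M_g\}_{g\in\bbI^{m|n}_{\zeta-}}$ and $\{M_{g\cdot w_0^\zeta}\}_{g\in\bbI^{m|n}_{\zeta-}}$ are unitriangularly related for $\preceq$. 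I would also record $S_\zeta^2=[W_\zeta]S_\zeta$ (a consequence of the Chevalley--Solomon formula \cite{Sol66}), which yields $\widetilde M_gS_\zeta=N_g$, together with the refinement $M_{g\cdot x}S_\zeta=q^{-\ell(x)}\widetilde N_g$ of \eqref{eq:MfS} valid for $x\in W^g$. This is the entire combinatorial skeleton.

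With this in place the three identities follow. Since $fw_0^\zeta$ is the $\preceq$-maximal element of its $W_\zeta$-orbit, it is dominant for every simple reflection $s_i$ of $W_\zeta$, so $T_{fw_0^\zeta}H_{s_i}=q^{-1}T_{fw_0^\zeta}$, whence $T_{fw_0^\zeta}H_\sigma=q^{-\ell(\sigma)}T_{fw_0^\zeta}$ for all $\sigma\in W_\zeta$, and therefore $T_{fw_0^\zeta}S_\zeta=[W_\zeta]T_{fw_0^\zeta}$, i.e.\ $T_{fw_0^\zeta}\in\wbT^{m|n}_\zeta$. Rewriting $T_{fw_0^\zeta}$ in the basis $\{\widetilde M_g\}$ and using the unitriangularity of the dictionary, its expansion has the shape $\widetilde M_f+\sum_{g\prec f}(q\Z[q])\widetilde M_g$ with the coefficient of $\widetilde M_g$ equal to the coefficient $t_{g\cdot w_0^\zeta,f\cdot w_0^\zeta}$ of $M_{g\cdot w_0^\zeta}$; being $\psi^\imath$-invariant, $T_{fw_0^\zeta}$ must equal $\mc T'_f$, which yields $\mc T'_f=T_{fw_0^\zeta}$ and $\texttt{t}'_{gf}=t_{g\cdot w_0^\zeta,f\cdot w_0^\zeta}$. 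Multiplying on the right by $S_\zeta$ and using $\widetilde M_gS_\zeta=N_g$ turns this into $N_f+\sum_{g\prec f}\texttt{t}'_{gf}N_g$, a $\psi^\imath$-invariant element of the defining shape of $\mc T_f$, so $\mc T'_fS_\zeta=\mc T_f$ and $\texttt{t}_{gf}=\texttt{t}'_{gf}$. Finally, for $f\in\bbI^{m|n}_{\zeta-}$ I would expand $L_fS_\zeta=\sum_h\ell_{hf}M_hS_\zeta$, group $h$ by its $W_\zeta$-orbit writing $h=g\cdot x$ with $g\in\bbI^{m|n}_{\zeta-}$ and $x\in W^g$, and apply $M_{g\cdot x}S_\zeta=q^{-\ell(x)}\widetilde N_g$ to obtain $L_fS_\zeta=\sum_g\bigl(\sum_{x\in W^g}\ell_{g\cdot x,f}\,q^{-\ell(x)}\bigr)\widetilde N_g$, with leading coefficient $1$ at $g=f$ and each inner sum in $q^{-1}\Z[q^{-1}]$ for $g\prec f$; as $L_fS_\zeta$ is $\psi^\imath$-invariant, uniqueness gives $L_fS_\zeta=\mc L_f$ and $\texttt{l}_{gf}=\sum_{x\in W^g}\ell_{g\cdot x,f}\,q^{-\ell(x)}$.

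The step I expect to be the main obstacle is the eigenvector assertion $T_{fw_0^\zeta}H_{s_i}=q^{-1}T_{fw_0^\zeta}$, i.e.\ the compatibility of the $\imath$-canonical basis of $\wbT^{m|n}$ with the parabolic Hecke action: unlike the purely type-$A$ situation of \cite[Section 3.7]{CCM}, here $W_\zeta$ may carry a type-$B$ factor, so this has to be extracted from the Bao--Wang $\imath$-canonical basis machinery (cf.\ \cite{BaoWang} and Proposition \ref{prop:comp:inv}). A secondary point requiring care is that $h\mapsto h\cdot w_0^\zeta$ is order-preserving on the $W_\zeta$-anti-dominant chamber, so that $g\prec f$ does imply $g\cdot w_0^\zeta\prec f\cdot w_0^\zeta$ in the coefficient comparisons above.
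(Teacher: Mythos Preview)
Your proposal is correct and follows precisely the route the paper itself indicates: the paper omits the proof here, stating only that it is ``analogous to \cite[Section 3.7]{CCM} and follows along the same line of arguments,'' and your outline is exactly that argument transported to the $\imath$-setting.

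One remark on what you flag as the main obstacle. The eigenvector assertion $T_{fw_0^\zeta}H_{s_i}=q^{-1}T_{fw_0^\zeta}$ can be sidestepped entirely by running the first identity in the opposite direction. Since each $\widetilde M_g$ is already a tensor product of ($\imath$-)canonical basis elements in the individual factors (as the paper observes just after \eqref{eq:Nf}), its expansion in the monomials $M_h$ has leading term $M_{g\cdot w_0^\zeta}$ with coefficient $1$ and all other coefficients in $q\Z[q]$, with the $h$'s confined to the $W_\zeta$-orbit of $g$. Substituting into the defining expansion $\mc T'_f=\widetilde M_f+\sum_{g\prec f}\texttt{t}'_{gf}\widetilde M_g$ shows directly that $\mc T'_f=M_{fw_0^\zeta}+\sum_{h}(q\Z[q])M_h$; as $\mc T'_f$ is $\psi^\imath$-invariant by construction, uniqueness in $\wbT^{m|n}$ gives $\mc T'_f=T_{fw_0^\zeta}$ without ever needing to verify that $T_{fw_0^\zeta}$ lies in $\wbT^{m|n}_\zeta$ a priori. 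The coefficient identity $\texttt{t}'_{gf}=t_{g\cdot w_0^\zeta,f\cdot w_0^\zeta}$ then follows exactly as you say, since $M_{g\cdot w_0^\zeta}$ occurs only in $\widetilde M_g$ among all the $\widetilde M$'s (different $W_\zeta$-orbits being disjoint). This is a minor streamlining, not a correction; your route via the eigenvector property is also valid and is the one taken in the type-$A$ analogue.
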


Let $\phi_\zeta:\widehat{\mathbb T}^{m|n}\rightarrow\widehat{\mathbb T}^{m|n}_\zeta$ be the $U^\imath$-module homomorphism defined by
\begin{align}\label{def:phiz}
\phi_\zeta(M_f):=M_fS_\zeta,\qquad f\in\bbI^{m|n}.
\end{align}
We have the following.

\begin{thm}\label{thm:szeta:fock} Let $\phi_\zeta:\widehat{\mathbb T}^{m|n}\rightarrow\widehat{\mathbb T}^{m|n}_\zeta$ be the map in \eqref{def:phiz}. For $f\in\bbI^{m|n}$, let $\tau$ be the minimum length element in $W_\zeta$ such that $f\cdot\tau\in\bbI^{m|n}_{\zeta-}$. Then we have
\begin{itemize}
\item[(1)] $\phi_\zeta\left( M_f\right)=q^{-\ell(\tau)}\widetilde{N}_{f\cdot\tau}$ and $\phi_\zeta(\widetilde{M}_f)=q^{-\ell(\tau)}N_f$.
\item[(2)] $\phi_\zeta(T_{f\cdot w_0^\zeta})=\mc T_f$, for $f\in\bbI^{m|n}_{\zeta-}$.
\item[(3)] $\phi_\zeta\left(L_f\right)=\begin{cases}
\mc L_f,&\text{ if } f\in\bbI^{m|n}_{\zeta-},\\
0,&\text{ otherwise.}
\end{cases}$
\end{itemize}
\end{thm}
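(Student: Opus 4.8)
The plan is to exploit that $\phi_\zeta$ is nothing but right multiplication by $S_\zeta$: since $\{M_f\mid f\in\bbI^{m|n}\}$ is a topological basis of $\wbT^{m|n}$ and $\phi_\zeta$ is continuous and $\bbQ(q)$-linear, we have $\phi_\zeta(v)=vS_\zeta$ for every $v\in\wbT^{m|n}$. Part~(1) then follows at once from \eqref{eq:MfS}: one has $\phi_\zeta(M_f)=M_fS_\zeta=q^{-\ell(\tau)}\widetilde N_{f\cdot\tau}$, and, writing $\widetilde M_f=\frac{1}{[W_f]}M_fS_\zeta$, also $\phi_\zeta(\widetilde M_f)=\frac{1}{[W_f]}M_fS_\zeta^2=\frac{[W_\zeta]}{[W_f]}M_fS_\zeta=q^{-\ell(\tau)}N_f$, where I use the symmetrizer identity $S_\zeta^2=[W_\zeta]S_\zeta$ (a formal consequence of the relations $S_jH_\tau=q^{-\ell(\tau)}S_j$ together with $[W_\zeta]=\prod_j[W^j]$) and \eqref{def:Nf}. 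For Part~(2) I would simply combine the two identities $\mc T'_f=T_{f\cdot w_0^\zeta}$ and $\mc T'_fS_\zeta=\mc T_f$ of Proposition~\ref{pi:can:basis}: as $\mc T'_f\in\wbT^{m|n}_\zeta\subseteq\wbT^{m|n}$, this gives $\phi_\zeta(T_{f\cdot w_0^\zeta})=\phi_\zeta(\mc T'_f)=\mc T'_fS_\zeta=\mc T_f$.

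The substance of the theorem is Part~(3). For $f\in\bbI^{m|n}_{\zeta-}$ it is again Proposition~\ref{pi:can:basis}, namely $L_fS_\zeta=\mc L_f$; so the remaining task is to prove that $\phi_\zeta(L_f)=L_fS_\zeta=0$ when $f\notin\bbI^{m|n}_{\zeta-}$. Such an $f$ is not $W_\zeta$-anti-dominant, hence there is a simple reflection $s$ of $W_\zeta$, lying in a single irreducible factor $W^{j_0}$ of $W_\zeta=\prod_jW^j$, with $f\cdot s\prec f$. Grouping $W^{j_0}$ into left cosets of $\langle s\rangle$ and factoring $H_s$ out on the left exhibits $S_{j_0}=(q+H_s)R$ for a suitable $R\in\mc H_{j_0}$; since the $S_j$'s pairwise commute, it then suffices to show $L_f(q+H_s)=0$, that is, $L_fH_s=-q\,L_f$.

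To establish $L_fH_s=-q\,L_f$ I would set $x:=L_f(H_s+q)$ and prove $x=0$. First, $x$ is $\psi^\imath$-invariant: using Proposition~\ref{prop:comp:inv} and $\overline{H_s}=H_s^{-1}=H_s+q-q^{-1}$ one gets $\psi^\imath(x)=L_f\,\overline{H_s}+q^{-1}L_f=L_f(H_s+q)=x$. Second, the quadratic relation yields $(H_s+q)(H_s-q^{-1})=0$, so $xH_s=q^{-1}x$; thus $x$ lies in the $q^{-1}$-eigenspace of $H_s$, which is spanned by the rank-one symmetrized vectors $\widetilde{N}^{(s)}_g:=M_g(q+H_s)$ for $g$ ranging over the $s$-anti-dominant functions $\preceq f\cdot s$. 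Writing $x=\sum_gc_g\widetilde{N}^{(s)}_g$, the $\psi^\imath$-triangularity of the $\widetilde{N}^{(s)}_g$ (obtained exactly as in the Lemma of Section~\ref{sec:qsymm}, applied to the rank-one symmetrizer $q+H_s$) forces, by downward induction on $g$, the top coefficient of any nonzero such $x$ to be bar-invariant; on the other hand, expanding the $c_g$ in terms of the coefficients $\ell_{\bullet f}$ of $L_f$ and invoking $\ell_{\bullet f}\in q^{-1}\Z[q^{-1}]$ together with the lifting property of the Bruhat order shows $c_g\in q^{-1}\Z[q^{-1}]$. A bar-invariant element of $q^{-1}\Z[q^{-1}]$ vanishes, so $x=0$, whence $L_fS_{j_0}=xR=0$ and $L_fS_\zeta=0$. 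When $s$ is the distinguished generator of a type-$B$ factor of $W_\zeta$, the same identity $L_fH_s=-q\,L_f$ is the $\imath$- (respectively $\jmath$-) analogue of the above and is available from the $\imath$-canonical basis formalism of \cite{BaoWang}.

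I expect the step $L_fH_s=-q\,L_f$ in Part~(3) to be the \emph{main obstacle}: one has to pin down the dual $\imath$-canonical basis vector $L_f$ finely enough to see that it is annihilated by the partial symmetrizer $S_{j_0}$. For the type-$A$ reflections this is handled by the bar-invariance/triangularity argument above, entirely parallel to \cite[Section~3.7]{CCM}; the genuinely new point is the analogous statement for the distinguished type-$B$ generator, where one leans on \cite{BaoWang}. The remaining ingredients---well-definedness and continuity of $\phi_\zeta$ on the completions, the coset bookkeeping behind $S_{j_0}=(q+H_s)R$, and the identity $S_\zeta^2=[W_\zeta]S_\zeta$---are routine.
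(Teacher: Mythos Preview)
Your proposal is correct and follows the same approach the paper indicates: the paper omits the proof of Theorem~\ref{thm:szeta:fock} and Proposition~\ref{pi:can:basis} entirely, writing only that ``this part is analogous to \cite[Section 3.7]{CCM} and follows along the same line of arguments,'' and your argument is precisely a reconstruction of that line---reducing (1) to \eqref{eq:MfS} and the symmetrizer identity $S_\zeta^2=[W_\zeta]S_\zeta$, deducing (2) from Proposition~\ref{pi:can:basis}, and for (3) factoring $S_\zeta$ through a rank-one symmetrizer $q+H_s$ and killing $L_f$ by the bar-invariance/triangularity trick. Your identification of the type-$B$ generator case as the one genuinely new ingredient, to be handled via the $\imath$-canonical basis machinery of \cite{BaoWang}, is exactly right and is the reason the paper cites \cite{BaoWang} alongside \cite{CCM}.
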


\subsection{The quantum symmetric pair $(U,U^\jmath)$ and $\jmath$-canonical bases}

A different coideal subalgebra $U^\jmath\subseteq U$ is introduced in \cite[Chapter 6.1]{BaoWang}. The pair $(U,U^\jmath)$ forms a (different) quasi-split quantum symmetric pair of type AIII. A parallel theory of canonical and dual canonical bases, called $\jmath$-canonical basis theory in loc.~cit., based on a corresponding anti-linear involution on $U^\jmath$ is then developed. In particular, it follows that on a similar Fock space, which we denote by $^\jmath\wbT^{m|n}$, one has $\jmath$-canonical and dual $\jmath$-canonical basis. Here, a difference is that in this new Fock space the standard bases in both $\bbV$ and $\bbW$ are now indexed by integers rather than half-integers so that set of the standard monomial basis is now in one-to-one correspondence with $^\jmath\bbI_{m|n}$, the set of integer-valued functions of $\{1,\cdots,m;\bar{1},\cdots,\bar{n}\}$. In \cite[Chapter 6.7]{BaoWang} a version of $(U^\jmath,\mc H_{B_m})$-duality on $\bbV^{\otimes m}$ was established, which then allows us to obtain a corresponding analogue of Proposition \ref{prop:comp:inv} in this setting.  This in turn enables us to construct the corresponding $q$-symmetrized Fock space, denoted by $^\jmath\bbT^{m|n}_\zeta$, for $\zeta\in \ch\frakn_\oa^+$, and its completion $^\jmath\wbT^{m|n}_\zeta$, which gives rise to a canonical $U^\jmath$-epimorphism ${^\jmath}\phi_\zeta:{^\jmath}\widehat{\mathbb T}^{m|n}\rightarrow{^\jmath}\widehat{\mathbb T}^{m|n}_\zeta$. A counterpart of the intertwiner for $(U,U^\jmath)$ is given \cite[Chapter 6.2]{BaoWang} with its integral property established in \cite[Chapter 6.5]{BaoWang}. This now provides us with all the necessary ingredients to repeat the constructions in Sections \ref{sec:QSP:def}--\ref{sec:can:last} for the quantum symmetric pair $(U,U^\jmath)$, leading to the construction of $\jmath$-canonical and dual $\jmath$-canonical bases on $q$-symmetrized Fock space $^\jmath\wbT^{m|n}_\zeta$ and then finally to a counterpart of Theorem \ref{thm:szeta:fock} in this setting.

\section{Category $\mc O$ for Ortho-symplectic Lie Superalgebras}\label{sec:ortho}

In this section we let $\g$ be an ortho-symplectic Lie superalgebra. We shall recall the solution of the irreducible character problem for the ortho-symplectic Lie superalgebras in $\mc O_\Z$ of \cite{BaoWang, Bao}. This will be used in the subsequent section.

\subsection{Lie superalgebra of type $B$}
For $\g=\mf{osp}(2m+1|2n)$ we shall consider the following set of simple roots $\Pi=\{-\epsilon_1,\epsilon_1-\epsilon_2,\cdots,\epsilon_{m-1}-\epsilon_m,\epsilon_m-\delta_1,\delta_1-\delta_2,\cdots, \delta_{n-1}-\delta_n\}$ so that we have the following Dynkin diagram:
\vskip0.5cm
\begin{center}
\hskip -3cm \setlength{\unitlength}{0.16in}
\begin{picture}(27,2)
\put(8,2){\makebox(0,0)[c]{$\bigcirc$}}
\put(5.6,2){\makebox(0,0)[c]{$\bigcirc$}}
\put(10.4,2){\makebox(0,0)[c]{$\bigcirc$}}
\put(14.85,2){\makebox(0,0)[c]{$\bigcirc$}}
\put(17.25,2){\makebox(0,0)[c]{$\bigotimes$}}
\put(19.4,2){\makebox(0,0)[c]{$\bigcirc$}}
\put(24,2){\makebox(0,0)[c]{$\bigcirc$}}
\put(26.05,2){\makebox(0,0)[c]{$\bigcirc$}}
\put(21.7,2){\makebox(0,0)[c]{$\cdots$}}
\put(8.4,2){\line(1,0){1.55}} \put(10.82,2){\line(1,0){0.8}}
\put(13.2,2){\line(1,0){1.2}} \put(15.28,2){\line(1,0){1.45}}
\put(17.7,2){\line(1,0){1.25}}
\put(19.8,2){\line(1,0){1.25}}
\put(22.3,2){\line(1,0){1.25}}
\put(24.4,2){\line(1,0){1.25}}
\put(6,1.8){$\Longleftarrow$}
\put(12.5,1.95){\makebox(0,0)[c]{$\cdots$}}
\put(5.5,1){\makebox(0,0)[c]{\tiny$-\epsilon_{1}$}}
\put(8,1){\makebox(0,0)[c]{\tiny$\epsilon_1-\epsilon_2$}}
\put(17.1,1){\makebox(0,0)[c]{\tiny$\epsilon_m-\delta_1$}}
\put(19.5,1){\makebox(0,0)[c]{\tiny$\delta_1-\delta_2$}}
\put(26,1){\makebox(0,0)[c]{\tiny$\delta_{n-1}-\delta_n$}}
\end{picture}
\end{center}
Let $\frakb$ denote the corresponding Borel subalgebra with the set of positive roots $\Phi^+$ consisting of \begin{align*}
\{\pm\epsilon_i-\epsilon_j,-\epsilon_k|i<j\}\cup\{\pm\delta_k-\delta_l|k< l\}\cup\{-2\delta_k\}\cup\{-\delta_k\}\cup\{\pm\ep_i-\delta_k\}.
\end{align*}
The Weyl vector equals
\begin{align*}
\rho=\sum_{i=1}^m(\hf-i)\ep_i+\sum_{j=1}^n(m-j+\hf)\delta_j.
\end{align*}

For $\la\in\h^*$, we define a function $f_\la:\{1,\cdots,m;\bar{1},\cdots,\bar{n}\}\rightarrow\C$ by
\begin{align*}
f_\la(i):=\begin{cases}
(\la+\rho,\ep_i),\quad\text{ for }i= 1,\cdots,m,\\
(\la+\rho,\delta_i),\quad\text{ for }i=\bar{1},\cdots,\bar{n}.
\end{cases}
\end{align*}

Recall that $\Lambda$ denotes the set of integral weights. Denote by $\succeq$ the Bruhat order on $\Lambda$ with respect to the simple system above. We have the following subsets in $\Lambda$:
\begin{align}\label{int:wt:B}
\Lambda^{\texttt{int}}:=\sum_{i=1}^m\Z\ep_i+\sum_{j=1}^n\Z\delta_j,\qquad
\Lambda^{\texttt{hf}}:= \sum_{i=1}^m(\hf+\Z)\ep_i+\sum_{j=1}^n(\hf+\Z)\delta_j.
\end{align}
Note that if $\la\in\Lambda$, but $\la\not\in\Lambda^{\texttt{int}}\cup\Lambda^{\texttt{hf}}$, then $\la$ is a typical weight. The assignment $f\rightarrow f_\la$ restricts to bijections $\Lambda^{\texttt{int}}\rightarrow\bbI_{m|n}$ and $\Lambda^{\texttt{hf}}\rightarrow{^\jmath\bbI_{m|n}}$ .

Recall that we let $\mc O$ denote the BGG category and $\mc O_\Z$ be the integral BGG subcategory for $\g$, respectively.  We further let $\mc O^{\texttt{int}}_\Z$ and $\mc O^{\texttt{hf}}_\Z$ denote the full subcategories of $\mc O_\Z$ consisting of objects with weights lying in $\Lambda^{\texttt{int}}$ and $\Lambda^{\texttt{hf}}$, respectively.
For $\la\in\h^*$, $M(\la)$ denotes the Verma module of $\frakb$-highest weight $\la$ and $L(\la)$ denotes its unique irreducible quotient. We also recall that $T^{\mc O}(\la)$ denotes the tilting module of highest $\la$ in $\mc O$.

Let $K(\mc O)$ denote the Grothendieck group of $\mc O$ and $K(\mc O)_\bbQ=K(\mc O)\otimes_\Z\bbQ$. The subgroup $K(\mc O_\Z)_\bbQ$ has basis $\{[M(\la)]\vert \la\in\Lambda\}$. We have the self-explanatory notations of $K(\mc O^{\texttt{int}}_\Z)_\bbQ$ and $K(\mc O^{\texttt{hf}}_\Z)_\bbQ$ for the corresponding Grothendieck groups.

We have a linear isomorphism $\psi:\bbT^{m|n}_{q=1}\rightarrow {K(\mc O^{\texttt{int}}_\Z)_\bbQ}$ at $q=1$ given by $ M_{f_\la}\rightarrow[M(\la)]$, by means of which we can define a topological completion $\widehat{K}(\mc O^{\texttt{int}}_\Z)_\bbQ$ of ${K(\mc O^{\texttt{int}}_\Z)}_\bbQ$. This gives a linear isomorphism $\psi: \wbT^{m|n}_{q=1}\rightarrow\widehat{K}(\mc O^{\texttt{int}}_\Z)_\bbQ$ at $q=1$. Indeed, $\psi$ is a $U^\imath_{q=1}$-homomorphism, where the action of the generators of $U^\imath_{q=1}$ on the Fock space correspond to that of certain translation functors (constructed by tensoring with symmetric powers of the natural module) acting on $\mc O^{\texttt{int}}_\Z$ \cite[(11.3)--(11.5)]{BaoWang}. We have the following solution of the irreducible character problem in $\mc O^{\texttt{int}}_\Z$.

\begin{thm}\label{thm:BW}\cite[Theorem 11.13]{BaoWang}
The map $\psi: \wbT^{m|n}_{q=1}\rightarrow\widehat{K}(\mc O^{\texttt{int}}_\Z)_\bbQ$ defined by sending $M_{f_\la}\rightarrow[M(\la)]$ satisfies
$$\psi\left(L_{f_\la}\right)=[L(\la)],\quad\text{and}\quad \psi\left(T_{f_\la}\right)=[T^{\mc O}(\la)].$$
In particular, we have $\ch L(\la)=\sum_{\mu\preceq \la}\ell_{f_\la,f_\mu}(1)\ch M(\mu)$.
\end{thm}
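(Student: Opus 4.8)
This is the Bao--Wang theorem \cite[Theorem 11.13]{BaoWang}, so the plan is to follow the categorification strategy of \cite{BaoWang}. The isomorphism $\psi$ is built so that $M_{f_\la}\mapsto [M(\la)]$, and the whole content is to upgrade $\psi$ to a $U^\imath_{q=1}$-module map carrying the $\imath$-canonical and dual $\imath$-canonical bases to the classes of tilting and simple modules. Granting that, the last formula is immediate: since $L_{f_\la}=M_{f_\la}+\sum_{\mu\prec\la}\ell_{f_\mu,f_\la}(q)M_{f_\mu}$, applying $\psi$ at $q=1$ yields $[L(\la)]=[M(\la)]+\sum_{\mu\prec\la}\ell_{f_\mu,f_\la}(1)[M(\mu)]$, which is the stated character identity.

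First I would set up the categorical $U^\imath_{q=1}$-action. The generators $e_{\alpha_i},f_{\alpha_i},k_{\alpha_i}^{\pm1}$ and the distinguished generator $t$ are to be realized by translation functors on $\mc O^{\texttt{int}}_\Z$ obtained from tensoring with the natural $\g$-module and its symmetric powers, followed by projection onto central-character summands; one checks on Verma modules that $[\,(\text{translation})\,M(\la)\,]$ equals the right-hand side of the corresponding Fock-space formulas \cite[(11.3)--(11.5)]{BaoWang}. The delicate node is $\alpha_0=-\epsilon_1$: crossing the wall $(\la+\rho,\epsilon_1)=0$ is where the type-$B$ short root and the coideal relations (reflected in the term $E_{\alpha_0}+qF_{\alpha_0}K_{\alpha_0}^{-1}+K_{\alpha_0}^{-1}$ inside $\imath(t)$) enter, and the $q$-graded Verma multiplicities under this translation must be matched against the action of $t$.

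Next I would produce the bar involution on the category side and invoke uniqueness. Tilting modules $T^{\mc O}(\la)$ are self-dual and admit Verma flags with $(T^{\mc O}(\la):M(\la))=1$ and $(T^{\mc O}(\la):M(\mu))\neq 0$ only for $\mu\preceq\la$; declaring the $\Z[q,q^{-1}]$-span of tilting classes (at $q=1$, the $\Z$-span) to be bar-invariant defines a semilinear involution on $\widehat K(\mc O^{\texttt{int}}_\Z)_\bbQ$. I would then show this matches $\psi^\imath$ transported through $\psi$ by generating all tilting modules from base cases (typical, respectively dominant, Verma modules, which equal their own tilting modules) via translation functors, using that translation functors commute with the duality and that their braiding is governed precisely by the intertwiner $\Upsilon$ and the quasi-$\mc R$-matrix $\Theta$, so the functorial bar operator corresponds to $\psi^\imath(m)=\Upsilon(\psi(m))$ on $\wbT^{m|n}$. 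Then Lusztig's Lemma \cite[Lemma 24.2.1]{Lu}, already used in Section~\ref{sec:icb} to produce $\{T_f\},\{L_f\}$, forces $\psi(T_{f_\la})=[T^{\mc O}(\la)]$ and, dually, $\psi(L_{f_\la})=[L(\la)]$, because both sides are the unique bar-invariant classes with the prescribed triangularity and polynomial-degree bounds.

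The hard part will be the very first ingredient: rigorously establishing that the translation functors realize the $U^\imath$-action with the correct $q=1$-graded multiplicities, and hence that the category-theoretic bar involution exists and is well defined. In \cite{BaoWang} this is handled by a super-duality reduction, in the spirit of Cheng--Lam--Wang, relating $\mc O$ for $\mf{osp}(2m+1|2n)$ to an infinite-rank limit where classical Kazhdan--Lusztig theory of type $B$, and therefore a Hecke-algebra bar involution, is available, together with truncation and parabolic functors controlling the descent to finite rank; the behaviour at the short simple root and the precise role of the generator $t$ are exactly the places where this reduction must be carried out with care.
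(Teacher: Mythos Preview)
The paper does not give its own proof of this statement: Theorem~\ref{thm:BW} is quoted verbatim from \cite[Theorem 11.13]{BaoWang} and is used as a black box in the subsequent arguments (notably in the proof of Theorem~\ref{thm:match:can}). There is therefore nothing in the paper to compare your proposal against.

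That said, your outline is a reasonable high-level summary of the Bao--Wang strategy, but a couple of points deserve caution if you were to flesh it out. First, your construction of the bar involution on the Grothendieck group by ``declaring the span of tilting classes to be bar-invariant'' is circular as stated: the whole point is to produce an involution \emph{a priori} and then identify its fixed-point basis with tiltings, not the other way around. In \cite{BaoWang} the bar involution on the categorical side comes from the super duality and truncation machinery you allude to at the end, not from tiltings directly. Second, the character identity in the last line of the theorem has the indices reversed relative to your derivation: the coefficients are $\ell_{f_\la,f_\mu}$ (expressing $[M]$ in terms of $[L]$'s inverted), whereas $L_{f_\la}=M_{f_\la}+\sum_{g\prec f_\la}\ell_{g,f_\la}(q)M_g$ gives $[L(\la)]$ in terms of $[M(\mu)]$'s with coefficients $\ell_{f_\mu,f_\la}$; you should double-check which direction the theorem is actually asserting and adjust accordingly.
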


\begin{rem}
For the category $\mc O^{\texttt{hf}}_\Z$ of half-integer weights of $\g$ we have a counterpart to Theorem \ref{thm:BW} above, now with $\wbT^{m|n}_{q=1}$ and $\widehat{K}(\mc O^{\texttt{int}}_\Z)_\bbQ$ replaced by $^\jmath\wbT^{m|n}_{q=1}$ and $\widehat{K}(\mc O^{\texttt{hf}}_\Z)_\bbQ$, respectively \cite[Theorem 12.5]{BaoWang}. The linear isomorphism is indeed a $U^\jmath_{q=1}$-homomorphism, where the action of the generators of $U^\jmath_{q=1}$ corresponds to that of the same translation functors acting now on $\mc O^{\texttt{hf}}_\Z$ \cite[Remark 12.4]{BaoWang}. The map again matches $\jmath$-canonical and dual $\jmath$-canonical basis elements of $^\jmath \wbT^{m|n}_{q=1}$ with tilting and irreducible objects in $\mc O^{\texttt{hf}}_\Z$, respectively.
\end{rem}

\subsection{Lie superalgebra of type $D$}\label{sec:type:D}
There is a Kazhdan-Lusztig theory for the Lie superalgebra $\g=\mf{osp}(2m|2n)$ based on different $\imath$-quantum groups. Indeed, similar Fock space realizations for the BGG categories of $\g$-modules of integer and half-integer weights were constructed in \cite{Bao}. In particular, an $\mf{osp}(2m|2n)$-counterpart of Theorem \ref{thm:BW} is obtained in\cite[Theorem 4.15]{Bao} for both integer and half-integer weight modules in $\mc O_\Z$.

\section{Serre Quotient Functor and Proof of Theorem \ref{thm::mainthmB}} \label{Sect::7}
The Serre quotient functor $\pi: {\mc O_\Z}\rightarrow \OI$ satisfies the universal property as follows. If there is an exact functor $F: {\mc O_\Z}\rightarrow \mc C$, where $\mc C$ is an abelian category, such that $F(X)=0$ for any $X\in \mc I_\zeta$. Then there is a unique exact functor $F': \OI\rightarrow \mc C$ such that $F=F'\circ \pi$; see \cite[Corollaries II.1.2 and III.1.3]{Gabriel} for more details.

In this section, we assume that $\g$ is a basic   Lie superalgebra, that is, $\g$ is one of the Lie superalgebras from \eqref{eq::Kaclist} except for the series $\mf p(n)$.  We consider the restriction of the Backelin functor $\Gamma_\zeta$ to $\mc O_\Z$. We will prove that $\Gamma_\zeta:{ \mc O_\Z\rightarrow \Gamma_\zeta(\mc O_\Z)}$ also satisfies this universal property. As a consequence, $\Gamma_\zeta$ provides an explicit realization of $\pi$, extending the results in \cite[Corollary 37]{CCM2}, where the cases of Lie superalgebras of type I, including $\mf p(n)$, are considered.  We shall now combine the results of the previous sections to complete the proof of Theorem 2.

\subsection{Functor $F_\zeta$ and Gorelik's equivalence} \label{sect::71}
Recall from \cite[Theorem 26]{Ch212} (see also \cite[Section 7.3]{CCM2}) that there is an exact functor from {$\mc O_\Z$} to $\mc W(\zeta)$:
\begin{align}
	&F_\zeta(-):= \mc L(M_0(\nu), -)\otimes_{U(\g_\oa)}M_0(\nu,\zeta): {\mc O_\Z} \rightarrow \mc W(\zeta),
\end{align}   where $\nu\in \h^\ast$ is a dominant and integral weight with the stabilizer subgroup $W_\zeta$. Here, for any $X\in \mc O$, the $\mc L(M_0(\nu), X)$ denotes the maximal $(\g,\g_\oa)$-submodule of $\Hom_\C(M_0(\nu),X)$ that is a direct sum of finite-dimensional $\g_\oa$-modules with respect to the adjoint action of $\g_\oa$.

The functor $F_\zeta(-)$ satisfies the universal property of the Serre quotient functor as described above and restricts to an equivalence between $\Opres$ and $\mc W(\zeta)$.  In the case when $\g$ is of type I, the Backelin functor $\Gamma_\zeta$ restricted to $\mc O_\Z$ and the functor $F_\zeta$  are isomorphic; see \cite[Corollary 37]{CCM}.

 We put $\g\mod_Z$ to be the category of all finitely generated $\g$-modules on which $Z(\g)$ acts locally finitely. For any central character $\chi$ of $\g$, we set $(\cdot)_\chi$ to be  the endofunctor on $\g\mod_{Z}$ of taking the largest summand of modules annihilated by some power of the kernel $\ker(\chi)$.

Let $\chi$ be a strongly typical central character with a perfect mate $\chi^0$ in the sense of \cite{Gor2}.  Denote by $\mc O_{\chi}$ and $\mc O_{\chi^0}$ the corresponding central blocks, namely,  $\mc O_{\chi}$ and $\mc O_{\chi^0}$ are the full subcategories of objects in $\mc O$ and $\mc O_\oa$ annihilated by some powers of $\ker(\chi)$ and $\ker(\chi^0)$, respectively. The following lemma is due to Gorelik  \cite{Gor2} (see also \cite[Lemma 3.1]{Co16}):
\begin{lem}[Gorelik]\label{lem:gorelik} Suppose that $\chi$ is a strongly typical central character with a perfect mate $\chi^0$.
	The following functors
	\begin{align}
	&\Res(-)_{\chi^0},~\Ind(-)_{\chi},
	\end{align} give rise to a mutually inverse equivalence of central blocks $\mc O_{\chi}$ and $\mc O_{\chi^0}$. Furthermore, this equivalence of categories preserves Verma modules.
\end{lem}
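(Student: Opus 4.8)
The plan is to reconstruct Gorelik's strongly typical reduction argument. The first ingredient is that $\Ind(-)=U(\g)\otimes_{U(\g_\oa)}(-)$ and $\Res(-)$ form a biadjoint pair up to a twist: by the PBW theorem $U(\g)\cong U(\g_\oa)\otimes\wedge(\g_\ob)$ as a $(\g_\oa,\g_\oa)$-bimodule, where $\wedge(\g_\ob)$ is the (finite-dimensional) exterior algebra of $\g_\ob$ with the adjoint $\g_\oa$-action, so that $\Res\Ind(-)\cong\wedge(\g_\ob)\otimes(-)$ as endofunctors of $\g_\oa\mod$; dually $\Res\Coind(-)\cong\wedge(\g_\ob)^*\otimes(-)$. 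Recall also from the discussion after \cite[Theorem 2.2]{BF} that $\Ind(-)\cong\Coind(-)$ up to tensoring by the one-dimensional $\g_\oa$-module $\wedge^{\dim\g_\ob}(\g_\ob)$, so that $\Coind$ is a right adjoint of $\Res$.

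The technical heart, which I would quote from \cite{Gor2} rather than reprove, is the anticenter element $\frakt\in\frakA$ (an odd element of the ``ghost center'' with $\frakt^2\in Z(\g)$) together with the fact that a central character $\chi$ is \emph{strongly typical} precisely when $\frakt$ acts invertibly on the block $\mc O_\chi$; the \emph{perfect mate} $\chi^0$ is the $\g_\oa$-central character to which $\chi$ restricts on the highest-weight line, and strong typicality controls the bookkeeping between $Z(\g)$- and $Z(\g_\oa)$-characters. Granting this, I would argue as follows. For $N\in\mc O_{\chi^0}$, the decomposition $\Res\Ind(N)\cong\wedge(\g_\ob)\otimes N$ splits over $Z(\g_\oa)$, and strong typicality ensures that its $\chi^0$-isotypic part is exactly the copy $\C\otimes N\cong N$ coming from the trivial summand $\C\subseteq\wedge(\g_\ob)$ — the Casimir eigenvalue separates it from all other summands. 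Unwinding, the unit of adjunction induces a natural isomorphism $\Res(\Ind(-)_\chi)_{\chi^0}\cong\id_{\mc O_{\chi^0}}$. A symmetric computation with $\Coind$ in place of $\Ind$, using invertibility of $\frakt$ to identify $\Ind(-)_\chi$ with $\Coind(-)_\chi$ up to a harmless twist, shows the counit induces $\Ind(\Res(-)_{\chi^0})_\chi\cong\id_{\mc O_\chi}$. Hence $\Res(-)_{\chi^0}$ and $\Ind(-)_\chi$ are mutually inverse equivalences between $\mc O_\chi$ and $\mc O_{\chi^0}$.

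For the Verma modules: by PBW, $\Res M(\la)\cong U(\mf n^-_\ob)\otimes M_0(\la)\cong\wedge(\mf n^-_\ob)\otimes M_0(\la)$ as $\g_\oa$-modules, whose $\g_\oa$-Verma subquotients are the $M_0(\la+\gamma)$ for $\gamma$ a weight of $\wedge(\mf n^-_\ob)$. Strong typicality of $\chi=\chi_\la$ forces $M_0(\la)$ to be the unique such subquotient whose $\g_\oa$-central character equals $\chi^0$, so $\Res(M(\la))_{\chi^0}\cong M_0(\la)$; applying the inverse equivalence $\Ind(-)_\chi$ then gives $\Ind(M_0(\la))_\chi\cong M(\la)$, so the equivalence matches Verma modules, as claimed.

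The main obstacle is precisely the construction of $\frakt$ and the equivalence ``strongly typical'' $\Leftrightarrow$ ``$\frakt$ invertible on $\mc O_\chi$'': this is the substance of \cite{Gor2}, and once it is available everything else is block-projection bookkeeping. A secondary subtlety is that the twisting module $\wedge(\g_\ob)$ is only a $\g_\oa$-module, so the natural isomorphisms above must be checked to descend to $\g$-module isomorphisms after the block projections are applied; this follows from Frobenius reciprocity together with the explicit form of the unit and counit of the $(\Ind,\Res)$-adjunction.
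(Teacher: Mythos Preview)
The paper does not give a proof of this lemma; it is stated as a citation of Gorelik \cite{Gor2} (with a pointer to \cite[Lemma 3.1]{Co16}) and used as a black box. So there is no ``paper's own proof'' to compare against, and your sketch is an attempt to unpack the cited result rather than to reproduce anything in the present paper.

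As an outline of Gorelik's argument your sketch is broadly on target: the $(\Ind,\Res)$ biadjunction, the identification $\Res\Ind(-)\cong\wedge(\g_\ob)\otimes(-)$, and the passage to block projections are the right moving parts, and the Verma computation at the end is correct. Two imprecisions are worth flagging. First, your description of the perfect mate as ``the $\g_\oa$-central character to which $\chi$ restricts on the highest-weight line'' is not the definition: a perfect mate is a $\chi^0$ for which the $\chi^0$-component of $\Res$ picks out a single copy (with multiplicity one) from $\wedge(\g_\ob)\otimes(-)$, and Gorelik's theorem is precisely that such a $\chi^0$ \emph{exists} when $\chi$ is strongly typical. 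So the sentence ``strong typicality ensures that its $\chi^0$-isotypic part is exactly the copy $\C\otimes N$'' conflates the definition of perfect mate with the existence theorem. Second, the separation is effected by the full center $Z(\g_\oa)$, not by ``the Casimir eigenvalue'' alone; a single Casimir will not in general distinguish the summands. These are wording issues rather than gaps in strategy, but since you explicitly quote the technical heart from \cite{Gor2} anyway, it would be cleaner to state the perfect-mate property as a hypothesis (which it is) rather than as a consequence you are deriving.
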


\begin{lem}  \label{lem::FstaWhi}
  Suppose that $\la, \la'\in \h^\ast$ such that
  \begin{align}
  &\Ind(M_0(\la'))_{\chi} = M(\la),\\
  &\Res(M(\la))_{\chi^{0}} = M_0(\la').
  \end{align}
  Then we have
	\begin{align}
	&\Res(M(\la,\zeta))_{\chi^0} \cong  M_0(\la',\zeta),\label{eq::41}\\
	&\Ind(M_0(\la',\zeta))_{\chi}\cong M(\la,\zeta),\label{eq::42}\\
	&F_\zeta(M(\la))\cong M(\la,\zeta). \label{eq::43}
	\end{align}
\end{lem}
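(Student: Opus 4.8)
The plan is to establish the three isomorphisms \eqref{eq::41}, \eqref{eq::42}, \eqref{eq::43} in turn, each time transporting the corresponding statement for $\g_\oa$ across Gorelik's equivalence (Lemma \ref{lem:gorelik}). For \eqref{eq::41}, I would combine Proposition \ref{prop::3}, the fact that $\Res(-)$ intertwines $\Gamma_\zeta$ and $\Gamma_\zeta^0$ (Section \ref{Sect::31}), and Backelin's identity $\Gamma_\zeta^0(M_0(\mu))\cong M_0(\mu,\zeta)$ (\cite{B}, Proposition 6.9). The extra ingredient is that $\Gamma_\zeta^0$ commutes with the decomposition of $\mc O_\oa$ into central blocks: for finitely generated $N\in\mc O_\oa$ one has $\ov N\cong\prod_\theta\ov{N_\theta}$ (weight spaces being finite-dimensional), the operators $x-\zeta(x)$ respect this product, and $Z(\g_\oa)$ acts on $\Gamma_\zeta^0(N_\theta)\subseteq\ov{N_\theta}$ with the same generalized character as on $N_\theta$; hence $(\Gamma_\zeta^0 N)_{\chi^0}\cong\Gamma_\zeta^0(N_{\chi^0})$. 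Taking $N=\Res M(\la)$ and using $\Res(M(\la))_{\chi^0}=M_0(\la')$ this gives
\[
\Res(M(\la,\zeta))_{\chi^0}\cong\Res\big(\Gamma_\zeta M(\la)\big)_{\chi^0}\cong\big(\Gamma_\zeta^0\Res M(\la)\big)_{\chi^0}\cong\Gamma_\zeta^0(M_0(\la'))\cong M_0(\la',\zeta).
\]

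For \eqref{eq::42}, I would apply Gorelik's equivalence to \eqref{eq::41}. Although Lemma \ref{lem:gorelik} is stated for category $\mc O$, the construction of \cite{Gor2} (see also \cite[Lemma 3.1]{Co16}) for strongly typical $\chi$ is an explicit twist using only local $Z(\g)$-finiteness, so $\Res(-)_{\chi^0}$ and $\Ind(-)_{\chi}$ restrict to mutually inverse equivalences between the $\chi$-part of $\mc N(\zeta)$ and the $\chi^0$-part of $\mc N_0(\zeta)$. Since $M(\la,\zeta)$ lies in the former, applying $\Ind(-)_{\chi}$ to \eqref{eq::41} yields $\Ind(M_0(\la',\zeta))_\chi\cong\Ind\big(\Res(M(\la,\zeta))_{\chi^0}\big)_\chi\cong M(\la,\zeta)$. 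Alternatively, one may check directly that $\Gamma_\zeta$ commutes with $\Ind$ (via $\Ind_{\g_\oa}^\g N\cong\wedge(\g_\ob)\otimes N$ as $\g_\oa$-modules, arguing as in the proof of Proposition \ref{prop::3}) and with truncation to a central block, and then invoke $\Gamma_\zeta(M(\la))\cong M(\la,\zeta)$.

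For \eqref{eq::43}, let $F_\zeta^0$ denote the evident $\g_\oa$-analogue of $F_\zeta$, namely $\mc L(M_0(\nu),-)\otimes_{U(\g_\oa)}M_0(\nu,\zeta)$ on $\mc O_\oa$ (where $\mc L(M_0(\nu),-)$ is now the largest $\ad\g_\oa$-locally-finite submodule of $\Hom_\C(M_0(\nu),-)$); by the classical results of Mili\v{c}i\'c--Soergel \cite{MS} (see also \cite{Ch212}) one has $F_\zeta^0(M_0(\la'))\cong M_0(\la',\zeta)$. I would then check that $F_\zeta$ is compatible with induction and with block truncation: using $\Ind_{\g_\oa}^\g N\cong\wedge(\g_\ob)\otimes N$ as $\g_\oa$-modules, the identification $\mc L(M_0(\nu),\Ind_{\g_\oa}^\g N)\cong\Ind_{\g_\oa}^\g\mc L(M_0(\nu),N)$, and the associativity isomorphism $\Ind_{\g_\oa}^\g(A)\otimes_{U(\g_\oa)}B\cong\Ind_{\g_\oa}^\g(A\otimes_{U(\g_\oa)}B)$, one obtains $F_\zeta(\Ind_{\g_\oa}^\g N)\cong\Ind_{\g_\oa}^\g F_\zeta^0(N)$; and $F_\zeta$ preserves the generalized $Z(\g)$-character since $Z(\g)$ acts through the left $\g$-action on the bimodule $\mc L(M_0(\nu),-)$. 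Combining with the hypothesis $\Ind(M_0(\la'))_\chi=M(\la)$ and with \eqref{eq::42} gives
\[
F_\zeta(M(\la))=F_\zeta\big(\Ind(M_0(\la'))_\chi\big)\cong\Ind\big(F_\zeta^0(M_0(\la'))\big)_\chi\cong\Ind(M_0(\la',\zeta))_\chi\cong M(\la,\zeta).
\]

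The step I expect to be the main obstacle is making all of these compatibility assertions precise: that Gorelik's equivalence genuinely descends to the Whittaker blocks $\mc N(\zeta)_\chi\cong\mc N_0(\zeta)_{\chi^0}$, and that the Backelin functor and $F_\zeta$ commute with induction and with truncation to a central block — the latter requiring some care for the Backelin functor because of the infinite products defining $\ov M$. Once these are established, the three displayed chains of isomorphisms complete the proof.
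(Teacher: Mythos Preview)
Your proposal is correct and follows essentially the same route as the paper: for \eqref{eq::41} the paper uses exactly your chain $\Res\Gamma_\zeta\cong\Gamma_\zeta^0\Res$ together with Proposition~\ref{prop::3} and \cite[Proposition~6.9]{B}; for \eqref{eq::42} the paper takes your alternative (commuting $\Gamma_\zeta$ with $\Ind$ and block truncation) rather than invoking Gorelik's equivalence on Whittaker blocks; and for \eqref{eq::43} the paper does precisely your computation, citing \cite[Proposition~5.15]{MS} for $F_\zeta^0(M_0(\la'))\cong M_0(\la',\zeta)$ and then applying \eqref{eq::42}. The paper treats the compatibility assertions you flag (commutation of $\Gamma_\zeta,\Gamma_\zeta^0,F_\zeta$ with $\Ind$, $\Res$, and $(-)_\chi$) as routine and does not spell them out, so your elaboration of these points is a welcome supplement rather than a deviation.
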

\begin{proof}
	First, we calculate
	\begin{align*}
	&\Res(M(\la,\zeta))_{\chi^0} \\
	&= \Res(\Gamma_\zeta(M(\la)))_{\chi^0} \text{ by Proposition \ref{prop::3}}\\
	&=\Gamma_\zeta^0(\Res(M(\la))_{\chi^0})\\
	&=\Gamma_\zeta^0(M_0(\la'))\\
	&=M_0(\la',\zeta) \text{ by \cite[Proposition 6.9]{B}.}
	\end{align*}

Next we calculate
	\begin{align*}
	&\Ind(M_0(\la',\zeta))_{\chi} \\
	&= \Ind(\Gamma_\zeta^0(M_0(\la')))_{\chi}\\
	&=\Gamma_\zeta(\Ind(M_0(\la')_{\chi})\\
	&=\Gamma_\zeta(M(\la))\\
	&=M(\la,\zeta) \text{ by Proposition \ref{prop::3}.}
\end{align*}

Finally, we calculate

\begin{align*}
	&F_\zeta(M(\la)) = F_\zeta(\Ind(M_0(\la'))_{\chi})\\
	&=   \Ind(F_\zeta^0(M_0(\la')))_\chi  \\
	&=\Ind(M_0(\la',\zeta))_\chi 	\text{ by \cite[Proposition 5.15]{MS}.} \\
	&= M(\la,\zeta) \text{ by the isomorphism \eqref{eq::42}.}
\end{align*}
\end{proof}

\begin{cor} \label{cor::SQ}
	The functors $\Gamma_\zeta(-)$ and $F_\zeta(-)$ { from $\mc O_\Z$ to $\mc N(\zeta)$} are isomorphic. In particular, the Backelin functor	$\Gamma_\zeta(-): { \mc O_\Z\rightarrow \mc W(\zeta)}$ satisfies the universal property of Serre quotient.
\end{cor}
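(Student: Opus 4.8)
The plan is to establish the natural isomorphism $\Gamma_\zeta\cong F_\zeta$ on $\mc O_\Z$ and then read off the universal property. The second assertion follows formally from the first: $F_\zeta\colon\mc O_\Z\to\mc W(\zeta)$ satisfies the universal property of the Serre quotient $\pi$ and restricts to an equivalence $\Opres\xrightarrow{\ \sim\ }\mc W(\zeta)$ (Section~\ref{sect::71}), and both of these properties are invariant under natural isomorphism of functors; since $\pi$ is the identity on objects it is in particular essentially surjective, hence so is $F_\zeta$ onto $\mc W(\zeta)$, so $\Gamma_\zeta$ lands in $\mc W(\zeta)$ as claimed. (One also checks directly, via Theorem~\ref{thm::4}(ii) and exactness, that $\Gamma_\zeta|_{\mc O_\Z}$ annihilates $\mc I_\zeta$, so that it does factor through $\pi$; but this is subsumed by the above.)

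For the natural isomorphism, the first reduction is standard: $\Gamma_\zeta$ and $F_\zeta$ are exact and $\mc O_\Z$ has enough projectives, so it suffices to produce a natural isomorphism of their restrictions to the full subcategory of projectives, which then extends uniquely along projective presentations. Both functors commute with translation functors, i.e.\ with $-\otimes E$ for a finite-dimensional $E$ and with the central-block projections $(-)_\chi$ of $U(\g)$: for $\Gamma_\zeta$ this is the standard compatibility of the Backelin functor with tensoring by finite-dimensional modules \cite{B} together with the fact that it is built inside $\ov{(-)}$ and so respects central characters; for $F_\zeta=\mc L(M_0(\nu),-)\otimes_{U(\g_\oa)}M_0(\nu,\zeta)$ it follows from $\mc L(M_0(\nu),E\otimes X)\cong E\otimes\mc L(M_0(\nu),X)$ and the projection formula $(E\otimes Y)\otimes_{U(\g_\oa)}Z\cong E\otimes(Y\otimes_{U(\g_\oa)}Z)$.

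The heart of the argument treats a strongly typical central character $\chi$ with perfect mate $\chi^0$. Here Gorelik's equivalence (Lemma~\ref{lem:gorelik}), namely the mutually inverse $\Ind(-)_\chi\colon\mc O_{\chi^0}\rightleftarrows\mc O_\chi\colon\Res(-)_{\chi^0}$ preserving Verma modules, together with its evident analogue for the Whittaker category $\mc N(\zeta)$ (which holds for exactly the same reason as in \cite{Gor2}, the twist being by the one-dimensional module $\wedge^{\dim\g_\ob}(\g_\ob)$), lets us identify both functors on $\mc O_\chi$ with a single manifestly natural composite:
\[
\Gamma_\zeta|_{\mc O_\chi}\ \cong\ \Ind(-)_\chi\circ\Gamma^0_\zeta\circ\Res(-)_{\chi^0}\ \cong\ F_\zeta|_{\mc O_\chi},
\]
where for the first isomorphism one uses that $\Res(-)$ intertwines $\Gamma_\zeta$ with $\Gamma^0_\zeta$ and that $\Gamma^0_\zeta$ preserves central components, and for the second one uses the intertwining of $F_\zeta$ with $\Ind$ and $(-)_\chi$ from the proof of Lemma~\ref{lem::FstaWhi} together with the classical isomorphism $F^0_\zeta\cong\Gamma^0_\zeta$ over the reductive Lie algebra $\g_\oa$ \cite{B,MS}. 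In particular $\Gamma_\zeta$ and $F_\zeta$ agree naturally on every projective lying in a strongly typical block.

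The step I expect to be the main obstacle is to propagate this from strongly typical blocks to all projectives of $\mc O_\Z$ while keeping track of naturality, since for Lie superalgebras strongly typical blocks are comparatively scarce. One route is to exhibit every indecomposable projective of $\mc O_\Z$ as a direct summand of a translate (by $-\otimes E$ and a block projection) of a projective in a strongly typical block, and then invoke compatibility with translation and with direct summands; this requires a careful choice of $E$ and of the typical weight and is where most of the bookkeeping lies. An alternative, possibly cleaner, route is to pass to $\Opres\cong\OI$, use its proper stratification (Section~\ref{Sect::4}) so that it is enough to identify the induced exact functors on the standard objects $\Delta(\la)=\Ind_\fp^\g P_{\fl_\zeta}(\la)$, and note that these are induced from projectives over the \emph{reductive Levi} $\fl_\zeta$, for which no super-typicality obstruction arises, so that the compatibility of $\Gamma_\zeta$ and $F_\zeta$ with parabolic induction (as in the proofs of Proposition~\ref{prop::3} and Lemma~\ref{lem::FstaWhi}) yields the identification directly. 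In either case the remaining inputs---the classical $\Gamma^0_\zeta\cong F^0_\zeta$ and the Whittaker-category version of Gorelik's equivalence---are routine, and the exclusion of $\mf{p}(n)$ in the hypothesis is precisely what guarantees the availability of the strongly typical machinery.
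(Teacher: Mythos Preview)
Your overall strategy is sound, but you are working much harder than necessary and the paper's proof is considerably shorter. The paper does not attempt to match $\Gamma_\zeta$ and $F_\zeta$ block by block, nor to propagate from strongly typical blocks via translation or stratification. Instead it verifies agreement on a \emph{single} Verma module: pick one strongly typical, dominant, integral, regular $\la$, use Lemma~\ref{lem::FstaWhi} (which in turn uses Proposition~\ref{prop::3} and \cite[Proposition 5.15]{MS}) to get $F_\zeta(M(\la))\cong M(\la,\zeta)\cong\Gamma_\zeta(M(\la))$, and then invoke \cite[Lemma 1]{CCM2}. That lemma is precisely the black box you are trying to reconstruct by hand: two exact functors out of $\mc O_\Z$ that commute with projective functors and agree on such a Verma module are naturally isomorphic. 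Your ``first route'' (translation plus direct summands) is essentially a sketch of its proof, and your worry about the bookkeeping is exactly why the paper cites it rather than redoing it.

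Two places where your write-up would need tightening if you pursued your route: (i) the natural isomorphism $F^0_\zeta\cong\Gamma^0_\zeta$ of functors on $\mc O_\oa$ is not literally in \cite{B} or \cite{MS}; those references give agreement on Verma modules, and promoting this to a natural isomorphism again requires an argument of the \cite[Lemma 1]{CCM2} type (now in the purely even setting); (ii) you invoke a ``Whittaker-category version of Gorelik's equivalence'' as evident, but the paper neither needs nor proves this---Lemma~\ref{lem::FstaWhi} only records the values on standard objects, which is all that is used. Your second route via the stratified structure of $\Opres$ is an interesting alternative, but note that identifying two exact functors on proper standard objects does not automatically give a natural isomorphism of functors either, so you would still face the same naturality issue.
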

\begin{proof} 	Let $\la\in \h^\ast$ be a  strongly typical,	dominant, integral and regular weight. Let $\la'\in \h^\ast$ be such that  the central character $\chi^0:=\chi^0_{\la'}$ of $\g_\oa$ associated to $\la'$ is a perfect mate of $\chi:=\chi_\la$ and $\Res(M(\la))_{\chi^{0}} = M_0(\la')$. By Lemmas \ref{lem:gorelik} and \ref{lem::FstaWhi} it follows that  $$F_\zeta(M(\la))\cong M(\la,\zeta) \cong \Gamma_\zeta(M(\la)).$$
	Then we have  $\Gamma_\zeta\cong F_\zeta$ by \cite[Lemma 1]{CCM2} (see also \cite[Corollary 37]{CCM2}). The conclusion follows. 
	\end{proof}

\begin{rem}\label{rem::28} Recall the coapproximation functor $\mf j:\mc O_\Z\rightarrow \mc O^{\vpre}$ from Section \ref{Sect::coappr}. By Lemma \ref{lem::FstaWhi}, under the equivalences of categories $\Opres\cong\mc W(\zeta)\cong \OI$, we have a correspondence between the simple objects $\mf j(L(\la))\cong P(\la)/Tr(\text{rad}P(\la))$, $L(\la,\zeta)$ and $\pi(L(\la))$, and a correspondence between the proper standard objects $\ov \Delta(\la)$, $M(\la,\zeta)$ and $\pi(M(\la))$. Here $\text{rad}P(\la)$ denotes the radical of $P(\la)$.
\end{rem}

\subsection{Proof of Theorem \ref{thm::mainthmB}}  \label{Sect::thm:match:can}
 In this section, we consider the case when $\g$ is an ortho-symplectic Lie superalgebra and make connection between $\mc W(\zeta)$ and (dual) $\imath$-canonical bases on the $q$-symmetrized Fock space from Section \ref{sec:can:last}.

First, suppose that $\g=\mf{osp}(2m+1|2n)$. Let $\mc W(\zeta)^{\texttt{int}}$ be the full subcategory of $\mc W(\zeta)$ consisting of objects that have composition factors of the form $L(\la,\zeta)$, with $\la\in\Lambda^{\texttt{int}}$. Similarly, the full subcategory $\mc W(\zeta)^{\texttt{hf}}$ consists of objects that have composition factors of the form $L(\la,\zeta)$, with $\la\in\Lambda^{\texttt{hf}}$.  We note that $\Gamma_\zeta\left(P(\la)\right)$, $\la\in\Lnua$ integral, are the projective indecomposable modules in $\mc W(\zeta)$ and there are no nontrivial extension between $L(\mu)$ and $L(\nu)$ in $\mc O_\Z$ with $\mu\in\Lambda^{\texttt{int}}$ and $\nu\in\Lambda^{\texttt{hf}}$. It follows that there are no nontrivial extensions between modules from $\mc W(\zeta)^{\texttt{int}}$ and $\mc W(\zeta)^{\texttt{hf}}$, and indeed $\Gamma_\zeta(\mc O_\Z^\texttt{int})=\mc W(\zeta)^{\texttt{int}}$ and $\Gamma_\zeta(\mc O_\Z^\texttt{hf})=\mc W(\zeta)^{\texttt{hf}}$. Let $K(\mc W(\zeta)^{\texttt{int}})_\bbQ$ and $K(\mc W(\zeta)^{\texttt{hf}})_\bbQ$ denote the respective Grothendieck groups with rational coefficients.

Define the linear map $\psi_\zeta:\widehat{\bbT}^{m|n}_{\zeta,q=1} \rightarrow \widehat{K}(\mc W(\zeta)^{\texttt{int}})_\bbQ$ at $q=1$ by $\psi_\zeta\left(\widetilde{N}_{f_\la}\right):=[M(\la,\zeta)]$, for $\la\in \Lnua\cap\Lambda^{\texttt{int}}$. Here, $\widehat{K}(\mc W(\zeta)^{\texttt{int}})_\bbQ$ denotes the corresponding topological completion inherited from the topological completion of $\bbT^{m|n}_{\zeta,q=1}$ via this map.

\begin{thm}\label{thm:match:can} Let $\g=\mf{osp}(2m+1|2n)$.
\begin{itemize}
\item[(1)]
The functors $\Gamma_\zeta$ and $F_\zeta$ from $\mc O^{\texttt{int}}_\bbZ$ to $\mc W(\zeta)^{\texttt{int}}$ categorify the $U^\imath$-homomorphism $\phi_\zeta:\wbT^{m|n}_{q=1}\rightarrow\wbT^{m|n}_{\zeta,q=1}$ at $q=1$.
\item[(2)] The map $\psi_\zeta$ sends $\imath$-canonical and dual $\imath$-canonical basis elements $\mc L_{f_\la}$ and $\mc T_{f_\la}$ to the tilting and simple objects corresponding to weight $\la\in\Lambda^{{\texttt{int}}}{\cap\Lnua}$, respectively.
\end{itemize}
\end{thm}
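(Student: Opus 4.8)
The plan is to reduce Theorem \ref{thm:match:can} to the commutativity of the square \eqref{comm:diag} together with three inputs already at hand: the Bao--Wang categorification of $\wbT^{m|n}$ by $\mc O^{\texttt{int}}_\bbZ$ (Theorem \ref{thm:BW}), the combinatorial description of the $q$-symmetrizer $\phi_\zeta$ (Theorem \ref{thm:szeta:fock}), and the behaviour of the Backelin functor on Verma, simple and tilting modules (Proposition \ref{prop::3}, Theorem \ref{thm::4}, Corollary \ref{cor::SQ}, Proposition \ref{prop:tilting}). First I would record that $\psi_\zeta$ is a well-defined topological isomorphism: the classes $\{[M(\la,\zeta)]\mid \la\in\Lnua\cap\Lambda^{\texttt{int}}\}$ are related to the simple classes $\{[L(\mu,\zeta)]\}$ by a matrix which is unitriangular for the order on weights, by the multiplicity formula $[M(\la,\zeta):L(\mu,\zeta)]=[M(\la):L(\mu)]$ of Theorem \ref{thm::mainthm}; hence they form a topological basis of $\widehat{K}(\mc W(\zeta)^{\texttt{int}})_\bbQ$ matching the topological basis $\{\widetilde{N}_{f_\la}\}$ of $\wbT^{m|n}_{\zeta,q=1}$, and $\gamma_\zeta$ extends to the completions since $\Gamma_\zeta$ is exact and order-compatible (it sends $M(\mu)\mapsto M(\mu,\zeta)$).

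For Part (1) I would check commutativity of \eqref{comm:diag} on the monomial basis. Given $\la\in\Lambda^{\texttt{int}}$, let $\tau\in W_\zeta$ be of minimal length with $f_\la\cdot\tau\in\bbI^{m|n}_{\zeta-}$ and let $\mu\in W_\zeta\cdot\la$ be the corresponding $W_\zeta$-anti-dominant weight, so $f_\mu=f_\la\cdot\tau$. Then $\gamma_\zeta(\psi(M_{f_\la}))=[\Gamma_\zeta(M(\la))]=[M(\la,\zeta)]$, while by Theorem \ref{thm:szeta:fock}(1) at $q=1$ we have $\phi_\zeta(M_{f_\la})=\widetilde{N}_{f_\mu}$, so $\psi_\zeta(\phi_\zeta(M_{f_\la}))=[M(\mu,\zeta)]$; the two agree since $M(\mu,\zeta)\cong M(\la,\zeta)$ by Proposition \ref{prop::3}. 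For the $U^\imath_{q=1}$-equivariance, $\psi$ and $\phi_\zeta$ are already $U^\imath$-homomorphisms (Theorem \ref{thm:BW}, Theorem \ref{thm:szeta:fock}); I would then use that $\Gamma_\zeta$ commutes with tensoring by finite-dimensional modules and with projection onto central blocks, hence with the translation functors (tensoring with symmetric powers of the natural module followed by a block projection) realizing the $U^\imath_{q=1}$-action, so that $\gamma_\zeta$ intertwines these actions; surjectivity of $\phi_\zeta$ then forces $\psi_\zeta$ to be $U^\imath_{q=1}$-equivariant as well. This is precisely the assertion that $\Gamma_\zeta\cong F_\zeta$ categorifies $\phi_\zeta$ at $q=1$.

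Part (2) I would then obtain by a diagram chase. For $\la\in\Lnua\cap\Lambda^{\texttt{int}}$, Theorem \ref{thm:szeta:fock}(3) gives $\phi_\zeta(L_{f_\la})=\mc L_{f_\la}$, so commutativity together with Theorem \ref{thm:BW} and Theorem \ref{thm::4}(ii) yields $\psi_\zeta(\mc L_{f_\la})=\gamma_\zeta([L(\la)])=[\Gamma_\zeta(L(\la))]=[L(\la,\zeta)]$, using that $\la$ is $W_\zeta$-anti-dominant. Similarly Theorem \ref{thm:szeta:fock}(2) gives $\phi_\zeta(T_{f_\la\cdot w_0^\zeta})=\mc T_{f_\la}$, where $f_\la\cdot w_0^\zeta$ corresponds to the $W_\zeta$-dominant weight $w_0^\zeta\cdot\la$, hence $\psi_\zeta(\mc T_{f_\la})=\gamma_\zeta([T^{\mc O}(w_0^\zeta\cdot\la)])=[\Gamma_\zeta(T^{\mc O}(w_0^\zeta\cdot\la))]$ by Theorem \ref{thm:BW}. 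By Proposition \ref{prop:tilting}, $T^{\mc O}(w_0^\zeta\cdot\la)=T(\la)$ is the indecomposable tilting module of $\Opres$ with a $\Delta$-flag starting at $\Delta(\la)$, and since $\Gamma_\zeta\cong F_\zeta$ restricts (Corollary \ref{cor::SQ}, Section \ref{sect::71}) to an equivalence $\Opres\xrightarrow{\sim}\mc W(\zeta)$ matching proper standard and simple objects (Remark \ref{rem::28}), hence also projective, standard, costandard and tilting objects, the image $\Gamma_\zeta(T(\la))$ is the indecomposable tilting object of $\mc W(\zeta)^{\texttt{int}}$ attached to $\la$. Thus $\psi_\zeta$ matches the $\imath$-canonical basis $\{\mc T_{f_\la}\}$ with tilting objects and the dual $\imath$-canonical basis $\{\mc L_{f_\la}\}$ with simple objects.

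I expect the real content to lie in the two steps that are not literally contained in the preceding sections: (i) the commutation of $\Gamma_\zeta$ with the translation functors defining the $U^\imath_{q=1}$-action, i.e.\ with tensoring by symmetric powers of the natural module and with block decomposition, which is nonetheless a standard property of Backelin's functor; and (ii) the identification of $\Gamma_\zeta(T^{\mc O}(w_0^\zeta\cdot\la))$ with the intrinsic indecomposable tilting object of $\mc W(\zeta)^{\texttt{int}}$, which relies on the equivalence $\Opres\cong\mc W(\zeta)$ respecting the properly stratified structure established in Section \ref{Sect::4}. Everything else is bookkeeping with the three cited results.
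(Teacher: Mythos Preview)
Your proposal is correct and follows essentially the same route as the paper's proof: you verify commutativity of \eqref{comm:diag} on the monomial basis via Proposition \ref{prop::3} and Theorem \ref{thm:szeta:fock}(1), deduce $U^\imath_{q=1}$-equivariance from commutation of $\Gamma_\zeta$ with translation functors, and then chase the diagram using Theorem \ref{thm:BW}, Theorem \ref{thm::4}(ii), Theorem \ref{thm:szeta:fock}(2)--(3), Proposition \ref{prop:tilting}, and the equivalence $\Opres\cong\mc W(\zeta)$ from Corollary \ref{cor::SQ}. Your added remarks on why $\psi_\zeta$ is well defined and on the two points (i)--(ii) carrying the real content are accurate and go slightly beyond what the paper spells out, but the underlying argument is the same.
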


\begin{proof}
Let $\gamma_\zeta:K(\mc O^{\texttt{int}}_\bbZ)_\bbQ\rightarrow K(\mc W(\zeta))_\bbQ$ be the map induced by the Backelin functor $\Gamma_\zeta$ on the respective Grothendieck groups. It is evident that the image lies in $K(\mc W(\zeta)^{\texttt{int}})_\bbQ$. By Proposition \ref{prop::3} and Theorem \ref{thm:szeta:fock} one verifies that $\gamma_\zeta\circ\psi$ and $\psi_\zeta\circ\phi_\zeta$ coincide on the monomial basis of $\bbT^{m|n}$ and hence $\gamma_\zeta\circ\psi=\psi_\zeta\circ\phi_\zeta$, which is precisely the commutativity of the diagram \eqref{comm:diag}. We note that $\phi_\zeta$ is a $U^\imath_{q=1}$-homomorphism and the Backelin functor commutes with translation functors. Furthermore, $\psi$ is compatible with the $U^\imath_{q=1}$-action on $\wbT^{m|n}_{q=1}$ and certain $\g$-translation functor action on $\mc O_\Z$ by \cite[Proposition 11.9]{BaoWang}. It follows therefore that the map $\psi_\zeta$ is also compatible with the $U^\imath_{q=1}$-action on $\wbT^{m|n}_\zeta$ and translation functor action on $\mc W(\zeta)^{\texttt{int}}$.

Part (1) is now a consequence of Proposition \ref{prop::3} and Theorem \ref{thm::4}(ii), \ref{thm:szeta:fock} and \ref{thm:BW}.

For $\la\in\Lambda(\zeta)\cap\Lambda^{\texttt{int}}$ we compute, using Theorems \ref{thm::4}(ii), \ref{thm:szeta:fock} and \ref{thm:BW}, that
\begin{align*}
[L(\la,\zeta)]=[\Gamma_{\zeta}(L(\la))]=\gamma_\zeta([L(\la)])=\gamma_\zeta(\psi(L_{f_\la}))=\psi_\zeta(\phi_\zeta(L_f)) =\psi_\zeta(\mc L_{f_\la}).
\end{align*}
This proves Part (2) for simple objects.

By Proposition \ref{prop:tilting}  each tilting module in $\mc O^{\vpre}$ is of the form  $T(\la)=T^{\mc O}(w_0^\zeta\cdot\la)\in\Opres$ for some $\la\in\Lnua$.
From this and the fact that the functors $\Gamma_\zeta$ and $F_\zeta$ are isomorphic to the same Serre quotient functor, and $F_\zeta$ restricts to an equivalence of categories from $\Opres$ to $\mc W(\zeta)$, it follows that $\Gamma_\zeta(T^{\mc O}(w_0^\zeta\cdot\la))$ is a tilting module in $\mc W(\zeta)$. By Theorem \ref{thm:szeta:fock}, for $\la\in\Lnua\cap\Lambda^{\texttt{int}}$, it follows that  $$\gamma_\zeta\left([T(\la)]\right)=\gamma_\zeta\left([T^{\mc O}(w_0^\zeta\cdot\la)]\right)=\gamma_\zeta\circ\psi(T_{ f_{\la}\cdot w_0^\zeta})=\psi_\zeta\circ\phi_\zeta (T_{ f_{\la}\cdot w_0^\zeta})=\psi_\zeta(\mc T_{f_\la}).$$
This completes the proof of Part (2).
\end{proof}

We can formulate the following half-integer version of Theorem \ref{thm:match:can} with similar proof.

\begin{thm}\label{thm:match:can1} Let $\g=\mf{osp}(2m+1|2n)$.
\begin{itemize}
\item[(1)]
The functors $\Gamma_\zeta$ and $F_\zeta$ from $\mc O^{{\texttt{hf}}}_\bbZ$ to $\mc W(\zeta)^{\texttt{hf}}$ categorify the $U^\jmath$-homomorphism ${^\jmath}\phi_\zeta:{^\jmath}\wbT^{m|n}_{q=1}\rightarrow{^\jmath}\wbT^{m|n}_{\zeta,q=1}$ at $q=1$.
\item[(2)] The induced linear map ${^\jmath}\psi_\zeta:{^\jmath}\widehat{\bbT}^{m|n}_{\zeta,q=1} \rightarrow \widehat{K}(\mc W(\zeta)^{\texttt{hf}})_\bbQ$ at $q=1$ that sends standard monomial basis elements to corresponding standard Whittaker module of weights in $\Lnua\cap\Lambda^{\texttt{hf}}$ matches $\jmath$-canonical and dual $\jmath$-canonical basis elements with the tilting and simple objects of weights in $\Lnua\cap\Lambda^{{\texttt{hf}}}$, respectively.
\end{itemize}
\end{thm}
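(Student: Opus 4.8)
The plan is to run the proof of Theorem \ref{thm:match:can} essentially verbatim, substituting the $\jmath$-quantum group data for the $\imath$-quantum group data throughout. First I would observe that all the Lie-superalgebra-theoretic inputs are insensitive to the integrality type of the weights: Proposition \ref{prop::3} (the Backelin functor sends $M(\la)$ to $M(\la,\zeta)$), Theorem \ref{thm::4}(ii) (the Backelin functor on simple highest weight modules), Corollary \ref{cor::SQ} (the isomorphism $\Gamma_\zeta\cong F_\zeta$ on $\mc O_\Z$ and the fact that this functor satisfies the universal property of the Serre quotient, hence restricts to an equivalence $\Opres\cong\mc W(\zeta)$), and Proposition \ref{prop:tilting} (the classification of indecomposable tilting objects in $\Opres$, and therefore in $\mc W(\zeta)$). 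Restricting these to the half-integer weight subcategory $\mc O^{\texttt{hf}}_\Z$ and using $\Gamma_\zeta(\mc O^{\texttt{hf}}_\Z)=\mc W(\zeta)^{\texttt{hf}}$, as already noted in Section \ref{Sect::thm:match:can}, yields the category side of the picture.

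On the quantum side I would invoke the $\jmath$-analogues assembled at the end of Section \ref{Sect::QSPcan}: the $(U^\jmath,\mc H_{B_m})$-duality on $\bbV^{\otimes m}$ from \cite[Chapter 6.7]{BaoWang}, which gives the $\jmath$-counterpart of Proposition \ref{prop:comp:inv}; the resulting $q$-symmetrized Fock space ${^\jmath}\bbT^{m|n}_\zeta$ with its $\jmath$-canonical and dual $\jmath$-canonical bases; the canonical $U^\jmath$-epimorphism ${^\jmath}\phi_\zeta:{^\jmath}\wbT^{m|n}\rightarrow{^\jmath}\wbT^{m|n}_\zeta$; and the $\jmath$-counterpart of Theorem \ref{thm:szeta:fock} describing ${^\jmath}\phi_\zeta$ on the standard monomial, the $\jmath$-canonical, and the dual $\jmath$-canonical bases. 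For the categorification of translation functors I would use the linear isomorphism ${^\jmath}\psi:{^\jmath}\wbT^{m|n}_{q=1}\rightarrow\widehat{K}(\mc O^{\texttt{hf}}_\Z)_\bbQ$ of \cite[Theorem 12.5]{BaoWang}, which matches $\jmath$-canonical and dual $\jmath$-canonical basis elements with the tilting and simple objects of $\mc O^{\texttt{hf}}_\Z$ and is a $U^\jmath_{q=1}$-homomorphism compatible with the same $\g$-translation functors acting on $\mc O^{\texttt{hf}}_\Z$ by \cite[Remark 12.4]{BaoWang}.

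With these in hand I would let ${^\jmath}\gamma_\zeta:\widehat{K}(\mc O^{\texttt{hf}}_\Z)_\bbQ\rightarrow\widehat{K}(\mc W(\zeta)^{\texttt{hf}})_\bbQ$ be the map induced by $\Gamma_\zeta$ and check, exactly as in the proof of Theorem \ref{thm:match:can}, that ${^\jmath}\gamma_\zeta\circ{^\jmath}\psi$ and ${^\jmath}\psi_\zeta\circ{^\jmath}\phi_\zeta$ agree on the standard monomial basis of ${^\jmath}\wbT^{m|n}_{q=1}$, by Proposition \ref{prop::3} and the $\jmath$-counterpart of Theorem \ref{thm:szeta:fock}; hence they coincide, which together with the facts that ${^\jmath}\phi_\zeta$ is a $U^\jmath_{q=1}$-homomorphism and that $\Gamma_\zeta$ commutes with translation functors yields Part (1). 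For Part (2), on simple objects I would run the chain
\begin{align*}
[L(\la,\zeta)]=[\Gamma_\zeta(L(\la))]={^\jmath}\gamma_\zeta([L(\la)])={^\jmath}\gamma_\zeta({^\jmath}\psi(L_{f_\la}))={^\jmath}\psi_\zeta({^\jmath}\phi_\zeta(L_{f_\la}))={^\jmath}\psi_\zeta(\mc L_{f_\la})
\end{align*}
for $\la\in\Lnua\cap\Lambda^{\texttt{hf}}$, where $L_{f_\la}$ and $\mc L_{f_\la}$ denote the dual $\jmath$-canonical basis elements of ${^\jmath}\wbT^{m|n}$ and of the $q$-symmetrized Fock space, using Theorem \ref{thm::4}(ii) and \cite[Theorem 12.5]{BaoWang}; on tilting objects I would use Proposition \ref{prop:tilting} (each indecomposable tilting in $\Opres$ is $T^{\mc O}(w_0^\zeta\cdot\la)$) together with the fact that $\Gamma_\zeta$, being isomorphic to the Serre quotient functor and restricting to an equivalence from $\Opres$ to $\mc W(\zeta)$, sends $T^{\mc O}(w_0^\zeta\cdot\la)$ to the indecomposable tilting of weight $\la$ in $\mc W(\zeta)^{\texttt{hf}}$, and then the $\jmath$-counterpart of Theorem \ref{thm:szeta:fock} applied to the $\jmath$-canonical basis element indexed by $f_\la\cdot w_0^\zeta$.

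I do not expect any genuinely new obstacle beyond the integer case; the argument is entirely parallel to that of Theorem \ref{thm:match:can}. The one point that will require care is simply to confirm that the $\jmath$-counterparts of Proposition \ref{prop:comp:inv} and of Theorem \ref{thm:szeta:fock}, as well as the compatibility of the translation functors with the $U^\jmath_{q=1}$-action on $\mc O^{\texttt{hf}}_\Z$, are all in place — but these are precisely what is supplied by \cite[Chapters 6 and 12]{BaoWang} and by the construction sketched at the end of Section \ref{Sect::QSPcan}, so the proof goes through without change.
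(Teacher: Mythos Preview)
Your proposal is correct and matches the paper's approach exactly: the paper does not give an explicit proof of this theorem but simply declares it to be ``the half-integer version of Theorem \ref{thm:match:can} with similar proof,'' and you have spelled out precisely that parallel argument, replacing the $\imath$-quantum group data by the $\jmath$-data from \cite[Chapters 6 and 12]{BaoWang} and the constructions at the end of Section \ref{Sect::QSPcan}.
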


\begin{rem}
As mentioned earlier in Section \ref{sec:type:D} we have Fock space realizations for the Kazhdan-Lusztig theory of type $D$ Lie superalgebras in the integral category $\mc O$ \cite{Bao} . This allows us to obtain counterparts of Theorems \ref{thm:match:can} and \ref{thm:match:can1} for the Lie superalgebra $\g=\mf{osp}(2m|2n)$ as well.
\end{rem}

\begin{rem}
Theorem \ref{thm:match:can} states that the natural correspondence between certain standard monomial basis elements of $\wbT^{m|n}_\zeta$ and standard Whittaker modules identifies the $\imath$-canonical basis in $\wbT^{m|n}_\zeta$ with tilting objects in $\mc W(\zeta)$. Indeed, using properties of $\psi^\imath$, we can construct a different bar involution on $\wbT^{m|n}$ and then on $\wbT^{m|n}_\zeta$ compatible with the reverse Bruhat order, i.e., applying the bar involution to a standard monomial basis element gives a $\Z[q,q^{-1}]$-linear combination of standard monomial basis elements corresponding to weights higher in the Bruhat ordering. Here, the completion is understood to be compatible with the reverse Bruhat ordering. In this setup, using Corollary \ref{cor:ringel:dual}, it can shown that the natural correspondence between the standard monomial basis elements and standard Whittaker modules will now identify the dual $\imath$-canonical basis elements in $\wbT^{m|n}_\zeta$ with the projective indecomposable modules in $\mc W(\zeta)$. Similar version exists for $\jmath$-canonical basis. We also have counterparts for $\g=\mf{osp}(2m|2n)$ as well.
\end{rem}

\subsection{Annihilator ideals}
For a given $\mf g$-module $M$, we denote by $\Ann_{\g}M$ the annihilator ideal of $M$. The following corollary establishes an analogue of \cite[Theorem 3.9]{Ko78} for Lie algebras and \cite[Theorem B]{Ch212} for Lie superalgebras of type I for basic Lie superalgebras:
\begin{cor} \label{cor::ann}
	We have
	\begin{align}
	&\Ann_\g(L(\la,\zeta)) = \Ann_{\g}(L(\la)),
	\end{align} for any $\la \in \Lnua$.
\end{cor}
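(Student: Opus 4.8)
The equality is the (ortho-symplectic-unrestricted) analogue of Kostant's description \cite[Theorem 3.9]{Ko78} and of its type I version \cite[Theorem B]{Ch212}, and the plan is to establish the two inclusions separately, mimicking the argument of loc.\ cit. Since $\la\in\Lnua$, Theorem \ref{thm::4}(ii) identifies $L(\la,\zeta)$ with $\Gamma_\zeta(L(\la))$, so throughout we compare $\Ann_\g\Gamma_\zeta(L(\la))$ with $\Ann_\g L(\la)$.

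The inclusion $\Ann_\g L(\la)\subseteq\Ann_\g L(\la,\zeta)$ is elementary. By its definition $\Gamma_\zeta(L(\la))$ is a $\g$-submodule of the weight completion $\ov{L(\la)}=\prod_\nu L(\la)_\nu$, and for any $M\in\mc O$ one has $\Ann_\g\ov M=\Ann_\g M$: writing $u\in\Ann_\g M$ as the finite sum $u=\sum_\gamma u_\gamma$ of its root-graded components in $U(\g)$, each $u_\gamma$ already annihilates the weight module $M$, hence $u$ annihilates $\prod_\nu M_\nu$ componentwise. Therefore $\Ann_\g L(\la)=\Ann_\g\ov{L(\la)}\subseteq\Ann_\g\Gamma_\zeta(L(\la))=\Ann_\g L(\la,\zeta)$.

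For the reverse inclusion --- the substantive one --- I would first reduce to standard objects: for every $\la\in\Lnua$ one has $\Ann_\g M(\la,\zeta)=\Ann_\g M(\la)$, because $M(\la,\zeta)=U(\g)\otimes_{U(\mf p)}Y_\zeta(\la,\zeta)$ is parabolically induced from the simple $\mf p$-module $Y_\zeta(\la,\zeta)$, whose $\mf l_\zeta$-annihilator equals $\Ann_{\mf l_\zeta}M_{\mf l_\zeta}(\la)$ by Kostant's theorem applied to the reductive Levi $\mf l_\zeta$ (on which $\zeta$ is non-degenerate by construction), and the annihilator of a parabolically induced module depends only on that of the inducing module while $\Ann_\g M(\mu)$ is constant on $W$-orbits. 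Next, for a sufficiently anti-dominant weight $\la^-$ linked to $\la$ one has $\Ann_\g L(\la^-)=\Ann_\g M(\la^-)$ (the super Kostant fact for the anti-dominant Verma), so $\Ann_\g L(\la^-,\zeta)=\Ann_\g M(\la^-,\zeta)=\Ann_\g M(\la^-)=\Ann_\g L(\la^-)$; the general $\la\in\Lnua$ is then obtained by transporting this equality along projective (translation) functors, using that $\Gamma_\zeta$ commutes with tensoring by finite-dimensional modules and with block projections, that $L(\la)$ and $L(\la,\zeta)$ are connected to $L(\la^-)$ and $L(\la^-,\zeta)$ through compatible such functors, and that primitive ideals are semiprime, so that Kostant's subquotient estimate together with Joseph's characterization of primitive ideals (in the version available for $\g$) forces $\Ann_\g L(\la,\zeta)=\Ann_\g L(\la)$. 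Equivalently --- and presumably this is how the type I case is carried out in \cite{Ch212} --- one invokes the realization $\Gamma_\zeta\cong F_\zeta=\mc L(M_0(\nu),-)\otimes_{U(\g_\oa)}M_0(\nu,\zeta)$ from Corollary \ref{cor::SQ} and the Harish--Chandra bimodule calculus of Mili{\v{c}}i{\'c}--Soergel, which shows directly that this functor preserves the annihilator of a simple module not lying in $\mc I_\zeta$. I expect this second inclusion --- that the Backelin functor does not shrink annihilators --- to be the main obstacle: it is the step that genuinely uses primitive ideal theory (induced ideals, $\tau$-invariants, Joseph's theorem, or the bimodule calculus), whereas the first inclusion and the reduction to the anti-dominant case are purely formal.
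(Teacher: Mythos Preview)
The paper's proof is a three-line citation argument, quite different from your primary approach. It simply quotes \cite[Theorem A(2), Theorem 26(2)]{Ch212} for the identity $\Ann_\g(F_\zeta(L(\la)))=\Ann_\g(L(\la))$, valid for any $\la\in\Lnua$, and then invokes Corollary~\ref{cor::SQ} ($\Gamma_\zeta\cong F_\zeta$ on $\mc O_\Z$) together with Theorem~\ref{thm::4}(ii) ($\Gamma_\zeta(L(\la))=L(\la,\zeta)$). The point is that the annihilator statement for $F_\zeta$ is \emph{already} established in \cite{Ch212} in the generality needed here (the functor $F_\zeta$ and its annihilator-preservation property are set up there for arbitrary quasi-reductive $\g$, not only type I); what the present paper adds is the identification $\Gamma_\zeta\cong F_\zeta$. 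Your closing remark comes close to this, but you frame the bimodule route as ``presumably how the type~I case is carried out in \cite{Ch212}'' --- in fact nothing further needs to be redone, and no Mili{\v{c}}i{\'c}--Soergel calculus has to be re-run here.

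Your first inclusion via $\Gamma_\zeta(L(\la))\subseteq\ov{L(\la)}$ and the root-graded decomposition of $u$ is fine. Your primary argument for the reverse inclusion, however, has real gaps as written. The assertion that the annihilator of a parabolically induced module depends only on the annihilator of the inducing module, and that $\Ann_\g M(\mu)$ is constant along $W$-orbits, both require justification in the super setting (central characters and linkage behave more subtly). The translation-functor step (``transporting the equality from $\la^-$ to $\la$'') is only a heuristic: you would need a precise statement about how translation functors act on $L(\la)$ versus $L(\la,\zeta)$ and why this forces equality of annihilators, and the appeal to ``Joseph's characterization (in the version available for $\g$)'' is not something one can simply invoke for an arbitrary basic Lie superalgebra. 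None of this is needed once you use the paper's route.
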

\begin{proof}
 By \cite[Theorem A(2), Theorem 26(2)]{Ch212} we have
 \begin{align}
 &\Ann_\g(F_\zeta(L(\la))) = \Ann_{\g}(L(\la)),
 \end{align} for any $\la \in \Lnua$.
 	The corollary now follows from Theorem \ref{thm::4}(ii) and Corollary \ref{cor::SQ}.
\end{proof}

\vspace{2mm}





\begin{thebibliography}{999999}
		
	
		
		
		\bibitem[Au]{Au} M. Auslander,
		{\em Representation theory of Artin algebras I}. Commun. in Alg. {\bf 1} (1974),
		177--268.
		
		
		
		
		
		\bibitem[Bac]{B} E.~Backelin. 		{\em Representation of the category $\mc O$ in Whittaker categories}.  		Internat. Math. Res. Notices,  		{\bf 4} (1997), 153--172.
		
		\bibitem[Bao]{Bao}	H.~Bao. 			{\em Kazhdan-Lusztig theory of super type D and quantum symmetric pairs}. 		Represent. Theory {\bf 21} (2017), 247-276
		
	\bibitem[Br1]{Br04} J.~Brundan,
	{\em Tilting modules for Lie superalgebras}, Commun. Algebra {\bf 32} (2004), 2251--2268.
		
		\bibitem[Br2]{Br1} J.~Brundan, {\em Kazhdan-Lusztig polynomials and character formulae for the Lie superalgebra $\mf{gl}(m|n)$}, J.~Amer.~Math.~Soc.~{\bf 16} (2003), 185--231.
		

	
		
		\bibitem[Br3]{Br}
		J. Brundan.
		{\em Representations of the general linear Lie superalgebra in the BGG category $\mathcal O$}. In: ``Developments and Retrospectives in Lie Theory: Algebraic Methods,'' eds. G. Mason et al., Developments in Mathematics {\bf 38}, Springer, 2014, pp. 71--98.
		
\bibitem[BrG]{BrG}
	J.~Brundan, S.M.~Goodwin,
	{\em Whittaker coinvariants for $GL(m|n)$}, Adv.~Math.~{\bf 347} (2019), 273--339.

		

	
	
		
\bibitem[BF]{BF}
		A.~ Bell and R.~ Farnsteiner.
		{\em On the theory of Frobenius extensions and its application to Lie superalgebras}.
		Trans. Amer. Math. Soc. {\bf 335} (1993) 407--424.
		
\bibitem[BG]{BG}
			J. N. Bernstein and S. I. Gelfand,
			{\em Tensor products of finite- and infinite-dimensional
			representations of semisimple Lie algebras}, Compositio Math. 41 (1980), no. 2, 245--285.
			
			


			

			
			
			
			
				\bibitem[BR]{BR}
					A.~Brown and A.~Romanov.
			 {\em Contravariant pairings between standard Whittaker modules and Verma modules}.
			 J. Algebra, {\bf 609} (2022): 145--179.
		
			
			
				
			
			
			
			\bibitem[BM]{BM}	P. ~Batra and V.~Mazorchuk. 		{\em Blocks and modules for Whittaker pairs.} 			J. Pure Appl. Algebra {\bf 215.7} (2011): 1552--1568.
			
			
			\bibitem[BW]{BaoWang} H.~Bao and W.~Wang, {\em A new approach to Kazhdan-Lusztig theory of type B via quantum symmetric pairs},   Asterisque 402, 2018, vii+134pp.
			
			
					

	
				\bibitem[BCW]{BCW}
			I. Bagci, K. Christodoulopoulou, E. Wiesner. Whittaker
			{\em Categories and Whittaker modules for Lie
				superalgebras}. Comm. Algebra {\bf 42} (2014), no. 11, 4932--4947.
			
			

			
		
		
		
		
		
	
	
		
			
			
			
			
			
		
					\bibitem[Ch1]{Ch21}		C.-W. ~Chen. 					{\em Whittaker modules for classical Lie superalgebras}. Comm. Math. Phys.  {\bf 388} (2021), 351--383.
					
					
				
					\bibitem[Ch2]{Ch212}	C.-W. ~Chen. 			
				{\em Annihilator ideals and blocks of Whittaker modules over quasireductive Lie superalgebras}, Preprint  \href{https://arxiv.org/abs/2108.07532}{https://arxiv.org/abs/2108.07532}.
			
				\bibitem[Co]{Co16}
		K.~Coulembier.
		{\em  The primitive spectrum of a basic classical Lie superalgebra.}
			Comm. Math. Phys. 348 (2016), no. {\bf 2}, 579--602.
			
			
			
			
			
	
		
			\bibitem[CC]{CC}
		C.-W. ~Chen and K.~Coulembier.
		{\em The primitive spectrum and category $\mc O$ for the periplectic Lie superalgebra}. Canad. J. Math. {\bf 72}
		(2020), no. 3, 625--655.
		
		
	
		
		
		
		
		
		
	
	

	
	
	
	
	
	
	
	 \bibitem[CM]{CoM1} K.~Coulembier and V.~Mazorchuk. {\em Primitive ideals, twisting functors and star actions for classical Lie superalgebras}. J. Reine Ang. Math., {\bf 718} (2016), 207--253.
	
	
	
	
	
	
	
		\bibitem[CPS]{CPS2}
	E. Cline, B. Parshall and L. Scott,
	{\em Stratifying endomorphism algebras.}
	Mem. Amer. Math. Soc. {\bf 124} (1996), no. 591, viii+119 pp.
	
	
	
	
	
	 \bibitem[CW]{ChWa12}
	 S.-J.~Cheng and W.~Wang.
	 {\em Dualities and representations of Lie superalgebras}. Graduate
	 Studies in Mathematics {\bf 144}. American Mathematical Society,
	 Providence, RI, 2012.
		
		
		\bibitem[CCC]{CCC}
		C.-W. ~Chen, S.-J.~ Cheng and K.~ Coulembier.
		{\em  Tilting modules for classical Lie superalgebras}.  J. Lond. Math. Soc. (2) {\bf 103} (2021), 870--900
		
	
	
		\bibitem[CCM1]{CCM}
		C.-W. Chen, K.~Coulembier and V.~Mazorchuk.
		{\em Translated simple modules for Lie algebras and simple supermodules for Lie superalgebras}. Math. Z. {\bf 297} (2021), 255--281.
		
		
		\bibitem[CCM2]{CCM2}
		C.-W. Chen, S.-J. Cheng. and V.~Mazorchuk.
		{\em Whittaker categories, properly stratified categories and Fock space categorification for Lie superalgebras.}
		Preprint.  \href{https://arxiv.org/pdf/2203.00541.pdf}{https://arxiv.org/pdf/2203.00541.pdf}.
		
			


\bibitem[CLW]{CLW2} S.-J.~Cheng, N.~Lam and W.~Wang,
{\em Brundan-Kazhdan-Lusztig conjecture for general linear Lie superalgebras}, Duke~Math.~J.~{\bf 110} (2015), 617--695.
	
	

	


	
	
		\bibitem[Dl]{Dl}
	 V. Dlab.
	 {\em Properly stratified algebras.}
	 C. R. Acad. Sci. Paris S\'er. I Math. 331 (2000), no. 3, 191--196.
	
	
	\bibitem[Du]{Duflo}
	M. Duflo,
	Sur la classification des id\'eaux primitifs dans l'alg\`ebre enveloppante d'une alg\`ebre de Lie semi-simple.
	Ann. of Math. (2) {\bf105} (1977), no. 1, 107--120.
	
	

	
	
	
	
	

 \bibitem[FKM]{FKM00}
 V. Futorny, S. K\"onig,  V. Mazorchuk.
 {\em $\mc S$-subcategories in $\mc O$}.
 	Manuscripta Mathematica, {\bf 102(4)}, 487--503.
 	
 	
 	
 	\bibitem[Ga]{Gabriel}
 	P.~Gabriel:
 	Des cat\'egories ab\'eliennes.
 	Bull. Soc. Math. France 90 (1962), 323--448.
	
	\bibitem[Go1]{Go}
	 M. Gorelik.
	{\em On the ghost centre of Lie superalgebra}.
	Ann. Inst. Fourier (Grenoble) 50 (2000), no.
	{\bf 6}, 1745--1764.
	
	
	
	
	\bibitem[Go2]{Gor2}
	 M. Gorelik.
	 {\em Strongly typical representations of the basic classical Lie superalgebras}.
	 J. Amer. Math.
	Soc. {\bf 15} (2002) no. 1, 167--184.
	
	

	
	
	
	
	
	
	
	
	
\bibitem[Ji]{Jim} M. Jimbo,
{\em A $q$-analogue of $U({\mathfrak g\mathfrak l}(N+1))$, Hecke
algebra, and the Yang-Baxter equation}, Lett. Math. Phys. {\bf 11}
(1986), 247--252.

	
	
	\bibitem[Kac]{Ka1}
	V.~Kac.
	{\em Lie superalgebras}.
	Adv. Math. {\bf 16} (1977), 8--96.
	
	
	
	

	
\bibitem[Kas]{Kas} M. Kashiwara, {\em On crystal bases}, Proc.~Canadian Math.~Soc.~{\bf 16} (1995), 155--196.


	\bibitem[Ko]{Ko78}
	B.~Kostant.
		{\em On Whittaker vectors and representation theory}.
		Inv. Math. {\bf 48.2} (1978): 101--184.
	
	
	
	
	
	
	

	
	
	
	
		\bibitem[KM]{KM2}
	O.~Khomenko; V.~Mazorchuk,
	{\em  On Arkhipov's and Enright's functors}.
	Math. Z. {\bf 249} 	(2005), no. 2, 357--386.
	
	
	
		\bibitem[KMa]{KoM}
		S.~Konig; V.~Mazorchuk,
	{\em Enright's Completions and Injectively Copresented Modules}.
	Trans. Amer. Math. Soc. {\bf 354}(7) (2002), 2725--2743.	

		\bibitem[Le1]{Le89}
	E.S.~Letzter,
	{\em Primitive ideals in finite extensions of Noetherian rings}.
	J. London Math. Soc. {\bf 2.3} (1989): 427--435.
	
	

\bibitem[Le2]{Le03} G.~Letzter, {\em Quantum symmetric pairs and their zonal spherical functions},
Transform.~Groups {\bf 8} (2003) 261--292.


	
	
	

\bibitem[Lu1]{Lu90}  G. Lusztig,
{\em Canonical bases arising from quantized enveloping algebras.} J. Amer. Math. Soc. 3
(1990), 447--498.
\bibitem[Lu2]{Lu92}  G. Lusztig,
{\em Canonical bases in tensor products}, Proc. Nat. Acad. Sci. 89 (1992), 8177--8179.

\bibitem[Lu3]{Lu} G. Lusztig, {\em Introduction to quantum groups}.
Modern Birkh\"auser Classics, Reprint of the 1994 Edition,
Birkh\"auser, Boston, 2010.


	
	
	\bibitem[Ma]{Ma}
	V.~Mazorchuk.
	{\em Parabolic category $\mc O$ for classical Lie superalgebras}.
	Advances in Lie Superalgebras.
	Springer International Publishing, 2014, 149--166.
	
		
	
	
	
	
	\bibitem[Mc1]{Mc}	 E. McDowell, 	 {\em On modules induced from Whittaker modules}. 	 J. Algebra {\bf 96} (1985), 161--177.
	
		\bibitem[Mc2]{Mc2}	 E. McDowell, 	  {\em  A module induced from a Whittaker module}. 	 Proc. Amer. Math.  Soc. 118 (1993), no. 2, 349--354.
	
	
	 	\bibitem[Mu1]{Mu92}
	 I.M. Musson,
	 {\em A classification of primitive ideals in the enveloping algebra of a classical simple Lie superalgebra},
	 Adv. Math. {\bf 91} (1992): 252--268.
	
	 \bibitem[Mu2]{Mu12}
	 I.~Musson.
	 {\em Lie superalgebras and enveloping algebras}.
	 Graduate Studies in Mathematics, 131.
	 American Mathematical Society, Providence, RI, 2012.
	
	

	
	
	
	
	\bibitem[MSt]{MaSt04}
	V.~Mazorchuk, C.~Stroppel.
	{\em Translation and shuffling of projectively presentable modules and a categorification of a parabolic Hecke module.}		Trans. Amer. Math. Soc. {\bf 357}(7) (2004), 2939--2973.
		
	
	
	
	
		\bibitem[MSo]{MS} D. Mili{\v{c}}i{\'c} and W. Soergel. 	{\em The composition series of modules induced from Whittaker modules}. Comment. Math. Helv. {\bf 72}.(1997), no.4, 503--520. 
	


	
		
	
	
	
	
	
	
	








	\bibitem[Sch]{Sch}
M.~Scheunert,
{\em The theory of Lie superalgebras: an introduction.} Vol. {\bf 716}. Springer, 2006.
	
	
	
	
	
	
	
	


	\bibitem[Sol]{Sol66}
L.~Solomon.
 {\em The Orders of the Finite Chevalley Groups},
 J.~Algebra	{\bf 3} (1966), 376--393.

	\bibitem[Soe]{So98}
W.~Soergel.
 {\em Character formulas for tilting modules over Kac-Moody algebras},
 Represent. Theory	{\bf 2} (1998), 432--448.
	
	
	
	

\end{thebibliography}
\end{document}